\newcommand{\rar}{\rightarrow}
\newcommand{\lar}{\longrightarrow}
\newcommand{\llar}{-\kern-5pt-\kern-5pt\longrightarrow}
\newcommand{\lllar}{-\kern-5pt-\kern-5pt\llar}
\newtheorem{Theorem}{Theorem}[section]
\newtheorem{Corollary}[Theorem]{Corollary}
\newtheorem{Proposition}[Theorem]{Proposition}
\newtheorem{Conjecture}[Theorem]{Conjecture}
\newtheorem{Remark}[Theorem]{Remark}
\newtheorem{Example}[Theorem]{Example}
\newtheorem{Question}[Theorem]{Question}
\def\ann{\mbox{\rm ann}}
\def\codim{\mbox{\rm codim}}
\def\coker{\mbox{\rm coker}}
\def\demo{\noindent{\bf Proof. }}
\def\depth{\mbox{\rm depth }}
\def\ds{\displaystyle}
\def\Ext{\mbox{\rm Ext}}
\def\gr{\mbox{\rm gr}}
\def\reg{\mbox{\rm reg}}
\def\red{\mbox{\rm red}}
\def\Hom{\mbox{\rm Hom}}
\def\e{\mathrm{e}}
\def\m{\mathfrak{m}}
\def\QED{\hfill$\Box$}
\def\rank{\mbox{\rm rank}}
\def\Rees{\mbox{$\mathcal{R}$}}
\def\AA{{\mathbf A}}
\def\BB{{\mathbf B}}
\def\CC{{\mathbf C}}
\def\RR{{\mathbf R}}
\def\SS{{\mathbf S}}
\def\LL{{\mathbf L}}
\def\xx{{\mathbf x}}
\def\TT{{\mathbf T}}
\def\ff{{\mathbf f}}
\def\pp{{\mathbf p}}
\def\g2{{\mathbf g}}
\def\xx{{\mathbf x}}
\def\aa{{\mathbf a}}
\def\yy{{\mathbf y}}
\def\ttt{{\mathbf t}}
\def\H{{\mathrm H}}
\def\m{{\mathfrak m}}
\def\p{{\mathfrak p}}
\def\q{{\mathfrak q}}
\def\rme{{\mathrm e}}
\def\s{{\mathrm s}}
\def\red{{\mathrm r}}
\begin{document}

\title{{\sc 
The Sally modules of  ideals: a survey 
}}

\author{ Wolmer V Vasconcelos}

\maketitle



\begin{abstract}
\noindent
The Sally module of a Rees algebra $\BB$ relative to one of its Rees subalgebras $\AA$ is a construct that
can be used as a mediator 
for the trade-off of cohomological (e.g. depth) information between $\BB$ and the corresponding associated graded ring for several types 
  of filtrations. While originally devised to deal with filtrations of finite colength, here
      we treat aspects of these developments for filtrations in higher dimensions as well.

\end{abstract}

\noindent {\small {\bf  Key Words and Phrases:}
   Castelnuovo regularity, extremal Rees algebra, Hilbert function,  Rees algebra, relation type, Sally module.
}


\tableofcontents

\section{Introduction}
\noindent
The aim  of  this survey is to review some of the properties of Sally modules of ideals. For a Noetherian local ring
$(\RR,\m)$ and an $\m$-primary ideal $I$, 
 these structures were 
introduced in \cite{SallyMOD} to give a quick access to some results of Judith Sally (see Section~\ref{SallyNotes}) 
connecting the depth properties 
of associated graded rings $\gr_I(\RR)$ to 
  extremal values of their multiplicity $\rme_0(I)$. They have since been used to refine this connection taking into account the roles of the higher Hilbert coefficients $\rme_1(I)$ and $\rme_2(I)$. Here we intend to visit the 
  literature to discuss a representative set of these results with emphasis on some of the techniques. The more 
  novel material
  are sought-after generalizations to a more general theory that applies to non Cohen-Macaulay rings 
  or to filtrations defined by  ideals of positive dimension. The latter are rather experimental in character.
  
  \medskip

Let $(\RR, \m)$ be Noetherian local ring of dimension $d>0$. {\em Sally modules}  
are defined in the context of Rees
algebras and some of their
finite extensions.
One of its main classes arises as follows. Let $J$ be an ideal of $\RR$ and let $\mathcal{F} = \{I_j, j\geq 0\}$, $I_0 = \RR$,  be a multiplicative  filtration of ideals. 
Consider the Rees algebras $\AA = \RR[Jt]$ and $\BB= \sum_{j\geq 0} I_jt^j= \RR\oplus \BB_{+}$. Suppose that $\BB$ is 
a finite extension of $\AA$. 
 The Sally module $S_{\BB/\AA}$ of $\BB$ relative to $\AA$ is defined by the exact sequence of finitely generated $\AA$--modules
 \begin{center}
\begin{eqnarray}\label{defnSally}
0 \rar I_1\AA \lar \BB_{+}[+1]= \bigoplus_{j\geq 1} I_{j}t^{j-1} \lar S_{\BB/\AA} = \bigoplus_{j\geq 1} I_{j+1}/I_1J^{j} \rar 0. 
\end{eqnarray}
\end{center}

\noindent The assumption is that $I_{j+1} = JI_j$ for $j\gg 1$, so that in  particular $J$ is a reduction of $I_1$.

\medskip

\noindent Some of the problems about these structures concern  the following issues:

\begin{itemize}

\item[{$\bullet$}] [Reduction number] The reduction number of $\BB$ relative to $\AA$ is defined to be  
\[ \red(\BB/\AA) = \inf \{ n \mid \BB = \sum_{j\leq n} \AA \BB_j\}.\]
We observe that $S_{\BB/\AA} = 0$ if and only if $\red(\BB/\AA) \leq 1$.  This condition is often referred as {\em minimal multiplicity}.

\item[{$\bullet$}] [Dimension] What are the possible values of $\dim S_{\BB/\AA}$? Note that for each $x\in J$, localizing at $\RR_x$ gives that $S_{\AA}(\BB)_x=0$, so that $S_{\AA}(\BB)$ is annihilated by a power of $J$ and therefore
$\dim S_{\AA}(\BB) \leq \dim \RR$.

\item[{$\bullet$}] [Multiplicity] 
What are the possible values of  multiplicity of $ S_{\BB/\AA}$ espressed in Hilbert coefficients?

\item[{$\bullet$}] [Cohen--Macaulayness] When is
 $S_{\BB/\AA}$ 
 Cohen--Macaulay?

\item[{$\bullet$}] [Regularity] There are known   relationships (\cite{Trung98}) between the  Castelnuovo regularity of the Rees algebra and that of the associated graded ring. We want to relate/extend
 comparisons to the Castelnuovo regularity of $S_{\BB/\AA}$ (if $\RR$ is not Cohen-Macaulay).

\item[{$\bullet$}] We would like to deal with these issues by relating them to some Hilbert functions associated to $\gr(\BB)$. One additional difficulty lies with the character  of the ring $\RR$ itself. This arises because
the coefficients of these functions may be positive or negative, often depending on whether $\RR$ is 
Cohen--Macaulay or not.   

\item[{$\bullet$}][Existence] Which modules are Sally modules? 

\item[{$\bullet$}][Independence] How independent of $J$ are the properties of $S_J(I)$? 

\item[{$\bullet$}][$a$-invariant] Review its role

\item[{$\bullet$}][Equations of the ideal] What are the relationships between $S_J(I)$ and the defining equations 
of $\RR[I\TT]$? What if $S_J(I) = 0$?

\item[{$\bullet$}][Non-primary ideals] Equimultiple, generically complete intersection, monomial  

\end{itemize}

\medskip

{\em Acknowledgments:} We are thankful to several authors whose work motivated the writing of this survey. We apologize for unfinished discussions and
invite readers to comment and suggestions.
We are particularly grateful to Alberto Corso, Laura Ghezzi, 
 Jooyoun Hong, Shiro Goto,  Maria Evelina Rossi,
Maria Vaz Pinto, Judy Sally,  
 Aron Simis and Giuseppe Valla.
 
\section{General Properties of  Sally Modules}



\subsection{Dimension, depth and reduction number} One of its most useful properties as a conduit of cohomological information between a Rees algebra and the associated graded ring is the following (\cite{SallyMOD}).
Throughout this section, $(\RR,\m)$ is a Noetherian local ring of dimension $d$ and $I$ is an $\m$-primary ideal. 
Whenever required will assume that $\RR$ has an infinite residue field. 

\medskip

 Cohen--Macaulay rings offer the most opportunities to exploit this notion. In the following 
 $(\RR,\m)$ is a Cohen--Macaulay local ring, $I$ is an $\m$-primary ideal,  $J$ or $Q$ denote minimal reductions 
 of $I$, and $\AA $ and $\BB$ are Rees algebras as above.

\begin{Proposition} \label{SallyJci}
Suppose $\RR$ is a Cohen--Macaulay local ring of dimension $d\geq 1$. If $J$ is an ideal generated 
by a system of parameters
 then the following hold:
\begin{enumerate}
\item If $S_{\BB/\AA}=0$, then $\gr(\BB)$ is Cohen--Macaulay.
\item If $S_{\BB/\AA}\neq 0$, then $\m \RR[JT]$ is its only associated prime.
\item If $S_{\BB/\AA}\neq 0$ then $\dim S_{\BB/\AA}= d$.
\item  $S_{\BB/\AA}$ is Cohen--Macaulay if and only if $\depth \gr(\BB)\geq d-1$.
\item If $\depth \gr(\BB) < d$ then
$\depth S_{\BB/\AA} = \depth \gr(
\BB)+1$.
\item If $S_{\BB/\AA}\neq 0$, then $\reg(S_{\BB/\AA}) = \reg(gr(\BB))=
\reg(\BB)$.
 \item Furthermore if $S_{\BB/\AA}$ is Cohen-Macaulay, then $\reg(S_{\BB/\AA}) = \reg(\BB)= \red_{\AA}(\BB) + 1$.
\end{enumerate}

\end{Proposition}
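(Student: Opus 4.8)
The plan is to run everything off two short exact sequences together with the Cohen--Macaulayness that the complete-intersection hypothesis on $J$ buys us. First the preliminaries: writing $J=(a_1,\dots,a_d)$ with $a_1,\dots,a_d$ a regular sequence (possible since $\RR$ is Cohen--Macaulay), $\AA=\RR[Jt]$ is Cohen--Macaulay of dimension $d+1$, while $\gr_J(\RR)=\AA/J\AA\cong(\RR/J)[T_1,\dots,T_d]$ and $\AA/I_1\AA\cong(\RR/I_1)[T_1,\dots,T_d]$ are Cohen--Macaulay of dimension $d$ and Castelnuovo--Mumford regularity $0$. Feeding $0\to I_1\AA\to\AA\to\AA/I_1\AA\to0$ into the depth lemma gives $\depth I_1\AA=d+1$, i.e. $I_1\AA$ is a maximal Cohen--Macaulay $\AA$-module (and $\reg I_1\AA\le1$); moreover $\sqrt{J\AA}=\m\AA=\m\RR[JT]$ is a prime of height one. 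Besides \eqref{defnSally} the second tool is the tautological sequence of graded $\BB$-modules
\[ 0\longrightarrow \BB_+[+1]\longrightarrow \BB\longrightarrow \gr(\BB)\longrightarrow 0, \]
whose splice with \eqref{defnSally} yields $0\to S_{\BB/\AA}\to \BB/I_1\AA\to \gr(\BB)\to 0$.

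For (3) I would note $\mathrm{Supp}_\AA(S_{\BB/\AA})\subseteq V(J\AA)=V(\m\AA)$, whose unique minimal prime is $\m\AA$, so $\dim S_{\BB/\AA}\le\dim\AA/\m\AA=d$ with equality once $S_{\BB/\AA}\ne0$. For (2): any $\p\in\Ass S_{\BB/\AA}$ contains $\m\AA$; if $\height\p\ge2$ then $\depth(I_1\AA)_\p\ge2$, so applying $\Hom_{\AA_\p}(\kappa(\p),-)$ to \eqref{defnSally} localized at $\p$ forces $\p\in\Ass \BB_+[+1]\subseteq\Ass\BB$ — impossible, because $a_1\in\m\subseteq\p$ is a nonzerodivisor on $\BB$, so no prime containing $\m\AA$ lies in $\Ass\BB$; hence $\p=\m\AA$. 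For (1): $S_{\BB/\AA}=0$ means $I_{j+1}=JI_j$ for all $j\ge1$, hence $I_j=J^{j-1}I_1$, $\BB=\RR[I_1t]$ and $I_1^2=JI_1$, so $\gr(\BB)=\gr_{I_1}(\RR)$ is Cohen--Macaulay by the Valabrega--Valla criterion.

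Items (4) and (5) are the core. The depth lemma on \eqref{defnSally}, using that $I_1\AA$ is maximal Cohen--Macaulay, gives $\depth S_{\BB/\AA}=\min\{\depth\BB_+[+1],d\}$ for $S_{\BB/\AA}\ne0$, and likewise $\depth(\BB/I_1\AA)=\min\{\depth\BB,d\}$. The one non-formal input I would isolate is the depth estimate $\depth(\BB/I_1\AA)=\min\{\depth\gr(\BB)+1,\,d\}$ — equivalently, $\depth\BB=\depth\gr(\BB)+1$ when $\depth\gr(\BB)<d$ and $\depth\BB\ge d$ when $\gr(\BB)$ is Cohen--Macaulay — obtained by a closer analysis of the local cohomology attached to $0\to\BB_+[+1]\to\BB\to\gr(\BB)\to0$; this is exactly where $J$ being a complete intersection is indispensable, and it is the main obstacle of the whole statement (the content is already in \cite{SallyMOD}). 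Granting it, the spliced sequence $0\to S_{\BB/\AA}\to \BB/I_1\AA\to\gr(\BB)\to0$ and the depth lemma give $\depth S_{\BB/\AA}=\min\{\depth\gr(\BB)+1,\,d\}$, which is precisely (5); together with $\dim S_{\BB/\AA}=d$ from (3) this is (4).

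For (6): since $\reg\AA=\reg(\AA/I_1\AA)=0$ and $\reg I_1\AA\le1$, while $S_{\BB/\AA}\ne0$ forces $\red_\AA(\BB)\ge2$ and hence $\reg\gr(\BB)\ge2$, the contribution of $I_1\AA$ gets absorbed as one propagates Castelnuovo--Mumford regularity through \eqref{defnSally} and $0\to\BB_+[+1]\to\BB\to\gr(\BB)\to0$; combined with Trung's comparison \cite{Trung98} of $\reg\BB$ and $\reg\gr(\BB)$ this yields $\reg S_{\BB/\AA}=\reg\gr(\BB)=\reg\BB$. For (7): a Cohen--Macaulay $S_{\BB/\AA}$ has codimension one in the Cohen--Macaulay ring $\AA$, so $\operatorname{pd}_\AA S_{\BB/\AA}=1$; reading $\reg$ off the resulting two-term free resolution and matching the degrees of its generators and first syzygies with $\red_\AA(\BB)$ pins the common value of (6) down to $\red_\AA(\BB)+1$. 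The residual difficulties are thus exactly these two: the depth formula for $\BB$ in the previous paragraph, and the syzygy-degree bookkeeping that produces the value $\red_\AA(\BB)+1$ in (7).
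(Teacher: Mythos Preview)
Your approach is essentially the paper's: the same short exact sequences (the paper lists four, including your two plus $0\to\BB_+\to\BB\to\RR\to0$ and $0\to I_1\AA\to\AA\to(\RR/I_1)[\TT_1,\dots,\TT_d]\to0$), the same identification of which terms are Cohen--Macaulay, and the same appeal to local cohomology with respect to the maximal homogeneous ideal for the depth and regularity assertions. Your spliced sequence $0\to S_{\BB/\AA}\to\BB/I_1\AA\to\gr(\BB)\to0$ is a pleasant addition that the paper does not make explicit; and for (1) you invoke Valabrega--Valla where the paper would simply depth-chase, but this is a cosmetic difference.

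Two small points. First, your argument for (3) as written is a non sequitur: knowing only that $\mathrm{Supp}\,S_{\BB/\AA}\subseteq V(\m\AA)$ and $S_{\BB/\AA}\neq0$ does not by itself force $\m\AA$ into the support, so it does not give $\dim S_{\BB/\AA}=d$. What you need is precisely (2), which you prove immediately after and which does yield $\m\AA\in\Ass S_{\BB/\AA}\subseteq\mathrm{Supp}\,S_{\BB/\AA}$; so reverse the order, or phrase (3) as a corollary of (2), which is exactly what the paper does.

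Second, in (7) your line ``$\operatorname{pd}_\AA S_{\BB/\AA}=1$, read $\reg$ off the two-term resolution'' silently uses $\reg(\AA)=0$, without which regularity cannot be read from the twists of a free resolution over $\AA$ (since $\AA$ is not a polynomial ring). A cleaner route, and the one implicit in the paper's remark about ``shifts'', is to use that $a_1t,\dots,a_dt$ is a linear regular sequence on the Cohen--Macaulay module $S_{\BB/\AA}$, so $\reg S_{\BB/\AA}$ equals the top degree of the Sally fiber $S_{\BB/\AA}/(Jt)S_{\BB/\AA}=\bigoplus_n I_{n+1}/JI_n$, which is visibly governed by $\red_\AA(\BB)$.
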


\begin{proof}
 The assertions follow from computing the cohomology (see \cite{Hu0}, \cite{JK95},
 \cite{Trung98})
 with respect to the maximal homogeneous
ideal of $\AA$ in the following exact sequences of finitely generated $\AA$--modules:

\[ 0 \rar I_1\AA^* \lar \AA^* \lar \gr_J(\RR)\otimes \RR/I_1 \simeq \RR/I_1[\TT_1, \ldots, \TT_r]^* \rar 0,\]
\[ 0 \rar \BB_{+} \lar \BB \lar \RR^* \rar 0,\]
\[ 0 \rar I_1\AA^* \lar \BB_{+}[+1] \lar S_{\BB/\AA} \rar 0,\]
\[ 0 \rar \BB_{+}[+1] \lar \BB \lar \gr(\BB) \rar 0,\]
where the modules tagged with ${}^*$ are Cohen--Macaulay. Along with the shifts, 
they allow for a measure of pivoting in all the depth calculations (and their grading in the calculation of Castelnuovo regularity).

\medskip
For the proof of (2) and (3),
in the third sequence above,
 \[ 0 \rar I_1\AA \lar \BB_{+}[+1] \lar S_{\BB/\AA} \rar 0,\]
$I_1\AA$ is a maximal Cohen--Macaulay $\AA$-module, while $\BB_{+}$ is torsion-free over $\AA$,
that is, it has the condition $S_1$ of Serre. It follows that
if $S_{\AA}(\BB)\neq 0$ then $S_{\AA}(\BB)$ has the condition $S_1$ of Serre. Therefore
$\p = \m \AA$ is its only associated, in particular $\dim S_{\AA}(\BB) = d$.

\medskip

Additionally, if  $d\geq 2$ and $\BB$ has the condition $S_2$ of Serre then so does $\BB_{+}$, and consequently $S_{\AA}(\BB)$
has the condition $S_2$ as well.
\end{proof}

One of our goals is to examine which parts of this formalism survive in case $J$ is an ideal of positive dimension, and/or $\RR$ is not Cohen--Macaulay.

\begin{Corollary}{\rm (\cite[Proposition 2.2]{GNOmmj})} Let $\AA$, $\BB$ and $\p$  be  as above.
  Then
\[s_0(S_{\AA}(\BB)) = 
\lambda(S_{\AA}(\BB)_{\p}).\]
Furthermore, if $s_0(S_{\AA}(\BB))= 1$, then $\m S_{\AA}(\BB)=0$.
\end{Corollary}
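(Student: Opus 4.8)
The plan is to read $s_0(S_{\AA}(\BB))$ as the multiplicity of the Sally module as a graded module over a polynomial ring in $d$ variables, and then to compare it with a length computed after localizing at the prime $\p = \m\AA$. By Proposition~\ref{SallyJci}, parts (2) and (3), when $S := S_{\AA}(\BB)\neq 0$ it is a module of dimension $d$ whose unique associated prime is $\p = \m\RR[J\TT]$. Since $\RR$ is Cohen--Macaulay and $J$ is generated by a system of parameters, $\RR[J\TT]$ is a polynomial ring (up to the usual identification) over $\RR$, and modulo $\m$ it is the $d$-dimensional polynomial ring $k[\TT_1,\dots,\TT_d]$; thus $S$ is a finitely generated graded module over this polynomial ring, supported exactly at its irrelevant maximal ideal in the sense that $\p$ is its only associated prime. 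First I would invoke the standard fact that for such a module the multiplicity $s_0(S)$ — the leading Hilbert coefficient, normalized so that the Hilbert polynomial of $S$ has degree $d-1$ with leading term $s_0 \binom{n+d-1}{d-1}$ — equals the length of $S_{\p}$ over the $d$-dimensional regular local ring $(\RR[J\TT])_{\p}$. This is exactly the associativity/additivity formula for multiplicities applied to the single minimal prime $\p$: $\e(S) = \sum_{\mathfrak{q}} \lambda(S_{\mathfrak{q}})\,\e(\RR[J\TT]/\mathfrak{q})$, and here the sum collapses to the term $\mathfrak{q}=\p$ with $\e(\RR[J\TT]/\p)=1$.

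Next I would make precise that the length $\lambda(S_{\p})$ is finite: this is immediate because $\p$ is a minimal (indeed the only associated) prime of $S$, so $S_{\p}$ is a module of finite length over the Artinian-after-localization structure — more carefully, $S_{\p}$ is annihilated by a power of $\p (\RR[J\TT])_{\p}$, and being finitely generated it has finite length. So the first assertion $s_0(S_{\AA}(\BB)) = \lambda(S_{\AA}(\BB)_{\p})$ follows. The main obstacle I anticipate is purely bookkeeping: matching the normalization of $s_0$ used in the cited reference \cite{GNOmmj} with the Hilbert-polynomial convention, and checking that the grading shift in \eqref{defnSally} does not affect the leading coefficient (it does not, since a shift changes only lower-order terms of the Hilbert polynomial). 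One should also confirm that $S$ is generated in the appropriate degrees so that it is genuinely a graded module over $k[\TT_1,\dots,\TT_d]=\gr_J(\RR)\otimes \RR/I_1$ (after killing $\m$), which is where the torsion-freeness of $\BB_+$ over $\AA$ and the structure of $I_1\AA$ from the proof of Proposition~\ref{SallyJci} are used.

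For the furthermore, suppose $s_0(S) = 1$, i.e. $\lambda(S_{\p}) = 1$. Then $S_{\p}$ is a simple module over $(\RR[J\TT])_{\p}$, hence $S_{\p} \cong k(\p) = (\RR[J\TT])_{\p}/\p(\RR[J\TT])_{\p}$, so $\p S_{\p} = 0$, i.e. $\m S_{\p} = 0$. I would then promote this to $\m S = 0$ globally: since $\p$ is the only associated prime of $S$, the natural map $S \to S_{\p}$ is injective (every element of $S$ killed by something outside $\p$ must be zero, as there are no embedded or other associated primes). Therefore $\m S \hookrightarrow \m S_{\p} = 0$, giving $\m S_{\AA}(\BB) = 0$. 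The only subtlety here is the injectivity of localization at $\p$, which rests precisely on $\Ass(S) = \{\p\}$ established in Proposition~\ref{SallyJci}(2); with that in hand the argument is immediate and there is no further obstacle.
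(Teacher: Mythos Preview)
Your argument is correct and follows essentially the same route as the paper: both invoke the associativity formula with the single associated prime $\p=\m\AA$ and $\e(\AA/\p)=1$ for the first equality, and both deduce $\m S=0$ from $\p S_{\p}=0$ together with the fact that $\p$ is the unique associated prime of $S$. Your use of the injectivity of $S\to S_{\p}$ is a clean way to phrase what the paper does by observing that a nonzero $\p S$ would again have $\p$ as its only associated prime, forcing $\lambda(S_{\p})\geq 2$.
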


\begin{proof}
The general assertion is clear since $\p$ is the unique associated associated prime of $S_{\AA}(\BB)$ and 
$\AA/\p$ has multiplicity $1$.
On the other hand, if
$\lambda(S_{\AA}(\BB)_{\p})= 1$, 
we must have
$\p S_{\AA}(\BB)_{\p}= 0$ and therefore $\p S_{\AA}(\BB)=0$ since this module is either trivial or $\p$ would
 be its only associated prime, in which case $s_0( S_{\AA}(\BB))\geq 2$.
\end{proof}

\begin{Corollary}\label{cor1SallyJci} Let $(\RR, \m)$ be a Noetherian local ring, let $I$ be an $\m$-primary ideal and $J$ one of its minimal reductions. If for some integer $s\geq 1$, $JI^s = IJ^s$, then $\dim S_J(I)= 0$. Moreover if $\RR$ is Cohen--Macaulay then
$I^2=JI$.
\end{Corollary}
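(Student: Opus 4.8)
The plan is to leverage Corollary~\ref{cor1SallyJci}'s hypothesis to pin down where the Sally module "lives" and then use the structural results of Proposition~\ref{SallyJci}. First I would unwind the hypothesis $JI^s = IJ^s$: rewriting this as $I\cdot J^s = J\cdot I^s$ shows that, after passing to the $s$-th Veronese-type reindexing, the filtration $\{I^j\}$ stabilizes relative to the sub-filtration generated by $J$ in a way that forces $S_J(I)$ to be annihilated not merely by a power of $J$ (which we already know from the Dimension bullet in the introduction, giving $\dim S_J(I)\le \dim\RR$) but by $J$ together with a controlled amount of $I$. Concretely, I would show that the graded component $S_J(I)_j = I^{j+1}/IJ^j$ vanishes for $j\ge s$: from $IJ^s = JI^s$ one gets $I^{s+1} = I\cdot I^s \subseteq$ something forcing $I^{s+1}\subseteq IJ^s$ once we know $J$ is a reduction, and then an induction pushes this up to all $j\ge s$. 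Hence $S_J(I)$ is a module concentrated in finitely many degrees, so it has finite length as a graded module over the standard-graded ring, giving $\dim S_J(I) = 0$.

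The main obstacle is the first sentence of that argument — establishing $I^{s+1}\subseteq IJ^s$, equivalently $I^{j+1} = IJ^j$ for $j\ge s$ — because the identity $JI^s = IJ^s$ is a single equality of ideals and one must bootstrap it. The clean way is: since $J$ is a reduction of $I$, $I^{n+1} = JI^n$ for $n\gg 0$; multiplying the hypothesis by $I$ repeatedly, $IJ^{s+1} = (IJ^s)J = (JI^s)J = J(IJ^s)\cdot$—no, better to multiply $JI^s = IJ^s$ by $J$ to get $J^2I^s = IJ^{s+1}$, and iterate to obtain $J^kI^s = IJ^{s+k}$ for all $k\ge 0$; for $k$ large the left side is $J^kI^s = I^{s+k}$ (reduction), so $I^{s+k} = IJ^{s+k}$, i.e. $I^{m+1} = IJ^m$ for $m = s+k-1$ large, and then a short descending argument—or simply observing $I^{m+1}=IJ^m\subseteq I^m$ and running the reduction equation $I^{n+1}=JI^n$ backward—yields $I^{j+1}=IJ^j$ for all $j\ge s$. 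This is the step that needs care; everything after it is formal.

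For the second assertion, suppose in addition that $\RR$ is Cohen–Macaulay. Now $J$, being a minimal reduction of the $\m$-primary ideal $I$, is generated by a system of parameters, so Proposition~\ref{SallyJci} applies to $\AA = \RR[JT]$ and $\BB = \RR[IT]$. By part~(3) of that proposition, if $S_J(I)\ne 0$ then $\dim S_J(I) = d \ge 1$, contradicting the vanishing of the dimension just proved. Therefore $S_J(I) = 0$, which by the Reduction number bullet is equivalent to $\red(\BB/\AA)\le 1$, i.e. $\red_J(I)\le 1$, which is precisely the statement $I^2 = JI$ (the reverse inclusion $JI\subseteq I^2$ being automatic). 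Thus the Cohen–Macaulay hypothesis upgrades "$\dim = 0$" to "$S_J(I)=0$" for free, via the rigidity in Proposition~\ref{SallyJci}(3) that a nonzero Sally module over a Cohen–Macaulay base with parameter reduction must have full dimension $d$.

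I would present the write-up in two short paragraphs mirroring the two assertions, spending most of the text on the bootstrapping identity $J^kI^s = IJ^{s+k}$ and its consequence $I^{j+1}=IJ^j$ for $j\gg 0$, then invoking the reduction equation to propagate downward to $j\ge s$, and finally citing Proposition~\ref{SallyJci}(3) for the Cohen–Macaulay refinement. The only genuinely delicate point to get exactly right is the range of $j$ for which $I^{j+1} = IJ^j$ holds — one wants it for all $j\ge s$, not merely $j\gg 0$ — and whether that sharper range is actually needed; since finite length (hence $\dim S_J(I)=0$) only requires vanishing for $j\gg 0$, I may in fact only need the weaker range, which simplifies the argument further.
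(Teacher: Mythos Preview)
Your treatment of the Cohen--Macaulay case is correct and coincides with the paper's: once $\dim S_J(I)=0<d$, Proposition~\ref{SallyJci}(3) forces $S_J(I)=0$, hence $\red_J(I)\le 1$, i.e.\ $I^2=JI$.

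The first part, however, has a genuine gap. First a minor point: your iterated identity is off by one. Multiplying $JI^s=IJ^s$ by $J$ gives $J^2I^s=IJ^{s+1}$, and inductively $J^kI^s=IJ^{s+k-1}$ for $k\ge 1$; your version $J^kI^s=IJ^{s+k}$ already fails at $k=1$. The serious problem is the next step, ``for $k$ large the left side is $J^kI^s=I^{s+k}$ (reduction).'' Reduction gives $I^{m}=J^{m-r}I^{r}$ for $m\ge r=\red_J(I)$; this yields $J^kI^s=I^{s+k}$ only when $s\ge r$, which is not assumed. In fact, once you have the corrected identity $J^kI^s=IJ^{s+k-1}$, the equality $J^kI^s=I^{s+k}$ is \emph{equivalent} to $I^{s+k}=IJ^{s+k-1}$, that is, to $S_{s+k-1}=0$ --- exactly the vanishing you are trying to establish. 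So the argument is circular at that point, and the ``descending argument'' you sketch afterwards cannot repair it.

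The paper closes the gap by iterating the hypothesis in the other direction. One uses reduction \emph{once}, at the start: choose $n$ of the form $n=a(s-1)+1\ge r$, so that $I^{n+1}=JI^{n}$. Then one telescopes using only $JI^s=IJ^s$:
\[
JI^{a(s-1)+1}=(JI^s)\,I^{(a-1)(s-1)}=(IJ^s)\,I^{(a-1)(s-1)}=J^{s-1}\cdot JI^{(a-1)(s-1)+1},
\]
and after $a$ iterations this becomes $J^{a(s-1)+1}I=IJ^{n}$. Hence $I^{n+1}=IJ^{n}$ for this particular $n\ge r$, and the equality then propagates to all larger indices via $I^{n+2}=JI^{n+1}=J\cdot IJ^{n}=IJ^{n+1}$. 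Thus $S_J(I)$ has only finitely many nonzero graded pieces, each of finite length, and $\dim S_J(I)=0$. Your identity $J^kI^s=IJ^{s+k-1}$ is true and is in the right spirit, but the passage from it to $I^{n+1}=IJ^n$ requires this telescoping (or something equivalent), not the unjustified claim $J^kI^s=I^{s+k}$.
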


\begin{proof}  The assertion
 means that the Sally module components $I^{n+1} / J^n I = 0$ for $n \gg 0$. 
 Let $n$ be an integer of the form $n = a(s-1) + 1 \geq \red_J(I)$. 
Then
\[ I^{n+1} = J I^{a(s-1)+ 1} = J I^s I^{(a-1)(s-1)} = J^{s-1}JI^{(a-1)(s-1)+1}= \cdots = J^{a(s-1)+1}I = IJ^n.\]

The last assertion follows from Proposition~\ref{SallyJci}(3).

\medskip

Alternatively, the assumption is that $S_J(I)$ is annihilated by a power of $(J\TT)$. Since by construction $S_J(I)$ is
also annihilated by a power of $J$, the assertion follows.
\end{proof} 

\begin{Example} {\rm Let us consider some examples.
\begin{itemize}
\item $\RR = k[x,y]/x(x,y)^2$, $J = (y)$, $I = (x,y)$. Then $I^2 = (x^2, xy, y^2)\neq 
(xy, y^2) = JI $ mod $x(x,y)^2$, but $I^2J = J^2I$. 

\medskip

\item For positive integers $d > r >1$, give an example of a ring with $\dim \RR =d$ with a Sally module
with $\dim S_J(I)=r$.

\medskip

\item Adding variables to $J$ and $I$, what happens to the Sally modules? Set
$\SS'= \RR[x]$, $L = (J,x)$, $Q =(I, x)$. Then $S_Q(L) = S_J(I)[x]$.
\medskip

\item Let $\BB = k[x,y,z]$, $I = (x,y,z)^3$ and the system of parameters $(x^3, y^3, z^3)$. View 
$I$ as the  ideal of the subring $\BB_0$ of $\BB$ generated by the forms of degree $\geq 3$. Now set
$Q_0 = (x^3, y^3, z^3)\BB_0$ and $Q = Q_0 \BB$. We have the equality $S_{Q_0}(I) = S_Q(I)\neq 0$. It follows that although $\BB_0$ is 
not Cohen-Macaulay, the Sally module $S_{Q_0}(I)$ is Cohen-Macaulay.

\medskip

\item 
 Let $(\RR, \m)$ be a Cohen--Macaulay local ring and let $\p$ be an ideal of codimension $g$ such that 
$\RR_{\p}$ is not a regular local ring. If $J = (a_1, \ldots, a_g) \subset \p$ is a regular sequence, then for
the link $I = J:\p$, $I^2 = JI$ (\cite[Theorem 2.3]{CP95}). In particular $S(J;I) = 0$.

\end{itemize}

}\end{Example}

\subsection{Veronese subrings}

Let $I$ be an ideal with a reduction $J = (a_1, \ldots, a_d)$, $I^{r+1} =  JI^r$.
 For $q>0$, $J_q = (a_1^q, \ldots, a_d^q)$ is a reduction
of $I^q$, then (\cite{BF85},
\cite[Corollary 2.6]{JK94}, 
\cite[Theorem 3.10]{Planas98})
\[ \red (I^q) \leq \max \lceil 1+{\frac { \red(I) -1}{q}}, 2\rceil ,\] 
where $\lceil \quad\rceil$ is the  integral part function. In particular, for $q\gg 0$, the Sally module $S_{\BB^{(q)}/\AA^{(q)}}$ is a standard graded module. 

\begin{Proposition} If $\RR$ is Cohen-Macaulay then for $q$ larger than the postulation number of $\gr_I(\RR)$,
 $\rme_0(\AA^{(q)};\BB^{(q)}) = \rme_2(I)$.
\end{Proposition}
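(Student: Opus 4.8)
The plan is to identify the $0$-th Hilbert coefficient $\rme_0(\AA^{(q)};\BB^{(q)})$ of the Sally module with the classical $\rme_2(I)$ by reading off both quantities from the Hilbert polynomial of $\gr_I(\RR)$. Recall that for a Cohen--Macaulay local ring $(\RR,\m)$ of dimension $d$ and $\m$-primary ideal $I$, the Hilbert--Samuel function $\lambda(\RR/I^{n+1})$ agrees for $n\gg 0$ with a polynomial $\sum_{i=0}^{d}(-1)^i\rme_i(I)\binom{n+d-i}{d-i}$, and the $\rme_i(I)$ are stable under passing to the $q$-th Veronese: more precisely the postulation number bound guarantees that once $q$ exceeds the postulation number of $\gr_I(\RR)$, the Hilbert function of $I^q$ becomes polynomial from the start, so that $\lambda(\RR/(I^q)^{n+1})$ is exactly a polynomial in $n$ for all $n\geq 0$, with leading coefficients expressible through the $\rme_i(I)$ after the substitution $n\mapsto qn$. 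This is the input that makes the computation clean.

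First I would set up the length function of the Sally module $S=S_{\BB^{(q)}/\AA^{(q)}}$ directly from the defining sequence \eqref{defnSally}: its $n$-th component is $(I^q)^{n+2}/(J_q)^{n+1}I^q$, so $\lambda(S_n)$ is the difference $\lambda(\RR/(J_q)^{n+1}I^q) - \lambda(\RR/(I^q)^{n+2})$. Since $J_q$ is generated by a system of parameters on the Cohen--Macaulay ring $\RR$, the first term is a genuine polynomial in $n$ whose coefficients are elementary: $\lambda(\RR/J_q^{n+1}I^q) = \rme_0(I^q)\binom{n+1+d}{d} - (\text{correction involving }\lambda(\RR/I^q))$, using that $J_q^{n+1}I^q$ has the same colength growth as a parameter ideal times a fixed module. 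Then I would expand everything in $n$, using $\rme_0(I^q)=q^d\rme_0(I)$ and the Veronese transformation of the higher coefficients, and read off the leading coefficient of $\sum_{n} \lambda(S_n)$, i.e. the $s_0$-multiplicity of $S$, which by Proposition~\ref{SallyJci}(3) has dimension $d$ so that $\rme_0(\AA^{(q)};\BB^{(q)})$ is indeed the leading Hilbert coefficient of this additive function. The arithmetic should collapse, after telescoping the binomial coefficients, to precisely $\rme_2(I)$.

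The key steps in order: (1) fix $q$ above the postulation number so all relevant Hilbert functions are polynomial from degree $0$ and $\red(I^q)\leq 2$, making $S$ standard graded; (2) write $\lambda(S_n)$ as the difference of the two length functions above; (3) substitute the known polynomial forms, invoking Cohen--Macaulayness of $\RR$ to control $\lambda(\RR/J_q^{n+1}I^q)$; (4) perform the $n\mapsto qn$ rescaling of the $\gr_I(\RR)$ Hilbert polynomial and match coefficients; (5) conclude by comparing the $\binom{n+d-1}{d-1}$-coefficients on both sides. I expect the main obstacle to be step (3): getting the exact polynomial expression for $\lambda(\RR/J_q^{n+1}I^q)$ requires knowing that $I^q/J_q I^q$ and its iterates behave like Koszul-homology-free pieces over the parameter ideal, i.e. one needs the Valla--type formula $\lambda(\RR/J^{n+1}M) = \sum_i \lambda(M/JM)\binom{n+d-i}{\cdots}$ valid because $J_q$ is a regular sequence; handling the shift by one extra power of $I^q$ (the $[+1]$ in the Sally sequence) and making sure no lower-order terms are dropped is where the bookkeeping is delicate. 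Once that length identity is in hand, the remainder is a finite-difference calculation that, by design of the Sally module, isolates exactly the third Hilbert coefficient.
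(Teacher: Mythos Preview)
Your overall strategy---compute the Hilbert function of the Sally module of $I^q$ directly and extract the leading coefficient after the substitution $n\mapsto qn$---is the right idea in dimension $d=2$, and this is exactly how the paper proceeds (see the proof of Corollary~\ref{NorthNarita}(b)). But there is a genuine gap: you never reduce to $d=2$, and the identity $s_0(I^q)=\rme_2(I)$ is \emph{false} in higher dimension. For instance, in dimension $3$ the same bookkeeping you propose (or equivalently, applying the closed formula $s_0=\rme_1-\rme_0+\lambda(\RR/I)$ to $I^q$ and expanding) gives
\[
s_0(I^q)=\rme_0(I)\binom{q}{3}+\rme_1(I)\binom{q}{2}+q\,\rme_2(I)-\rme_3(I),
\]
which does not collapse to $\rme_2(I)$. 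The paper's argument handles this by first passing modulo a superficial element to reach $d=2$, where the computation does work; the equality $\rme_2(\bar I)=\rme_2(I)$ under such a reduction is what makes this legitimate for the Narita inequality. Your expectation that ``the arithmetic should collapse\ldots to precisely $\rme_2(I)$'' in arbitrary $d$ is the point that fails.

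A secondary remark: the obstacle you flag in step~(3)---controlling $\lambda(\RR/J_q^{n+1}I^q)$---is a re-derivation of the formula $s_0=\rme_1-\rme_0+\lambda(\RR/I)$, which the paper has already established for Cohen--Macaulay $\RR$ (Corollary preceding Remark~\ref{s0gen}). The paper simply quotes that formula for $I^q$ and then does the $d=2$ coefficient match; you would save effort by doing the same rather than unpacking the Sally sequence from scratch.
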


\medskip 

For a proof see Corollary~\ref{NorthNarita}(b). It would be interesting 
 to compare two such Sally modules with respect to their dimensions, multiplicities and depth.

\subsection{The fiber of the Sally module}
The Sally module $S_{\BB/\AA}$ is a module over  $\AA=\RR[J\TT]$. An important role here is that of the  module
\[ F_\AA(\BB) = S_{\BB/\AA}\otimes_\AA(\AA/(J\TT)) = \bigoplus_{n\geq 1}^r I_{n+1} /JI_n,\] which we call the {
its special fiber or simply the  Sally fiber}
of $S_{\BB/\AA}$. Here are some properties of $F_\AA(\BB)$:

\begin{enumerate}
\item There is a natural action of $\BB$ on $F_{\AA}(\BB)$:   For $x=b_s\TT^s\in I_s \TT^s$, 
\[x\cdot (I_{n+1}/JI_n) = xI_{n+1} + J I_{n+s}/JI_{n+s} \subset I_{n+s+1}/JI_{n+s}.\]

 \item For the filtration $I_n = I^n$,
 $ I^{r+1} \subset JI^r$, in particular $r=\red_{\AA}(\BB)\leq \lambda(F_\AA(\BB))+1$.  
 
 \item In the case of equality $I^{n+1}/JI^ n\simeq \RR/\m$ for 
 all $1\leq n\leq r-1$. 
 
 \item We conjecture that in this case all $I^n$ are Ratliff-Rush closed! Check in \cite{CPP98}.

 \item This can be further refined by considering the action of $I_s\TT^s$ on $F_\AA(\BB) \otimes \RR/\m = \overline{F}$ and benefit
 from Nakayama's Lemma: 
 $\red_{\AA}(\BB) \leq \lambda(\overline{F})+1$.

\item Let $I^n$ be an $I$-adic filtration and $\widetilde{I^n}$ its Ratliff-Rush closure. There is a natural mapping of Sally modules
\[ I^n/I J^{n-1} \mapsto \widetilde{I^n}/\widetilde{I}J^{n-1}.\] If $I = \widetilde{I}$
this mapping is an embedding that is an
 isomorphism for $n \gg 0$.

 \item In this case the Hilbert polynomials agree so that in particular $
 s_0(S_{\BB/\AA}) = s_0(S_{\widetilde{\BB}/\AA})$.
 
 \medskip
 
 


 
 \end{enumerate}

\section{Dimension, Multiplicity and Euler Characteristics}
\noindent
We begin to develop the basic metrics to examine Sally modules with a focus on the determination of multiplicities. 

\subsection{The  Hilbert series and dimension} A basic calculation refines  the relationship between
the multiplicity and the dimension
of the Sally module.

\begin{Proposition}
 Let $(\RR, \m)$ be a Noetherian local ring of dimension $d$, $I$ an $\m$-primary ideal and $Q$ one of 
its minimal reductions. 
Then $\dim S_Q(I) = d$ if and only if $s_Q(I) \neq 
0$.
\end{Proposition}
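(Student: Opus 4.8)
The plan is to express everything in terms of the Hilbert series of the Sally module and read off dimension versus multiplicity directly. First I would recall that, as a module over $\AA = \RR[Q\TT]$, the Sally module $S_Q(I) = \bigoplus_{j\geq 1} I^{j+1}/I Q^j$ is finitely generated, and—since each component is a finite-length $\RR$-module (as $I$ is $\m$-primary and $Q$ is a reduction of $I$, so $I^{j+1}=QI^j$ for $j\gg 0$, forcing the module to be annihilated by a power of $Q\TT$ as well as by a power of $\m$)—it has a Hilbert function $j\mapsto \lambda(I^{j+1}/IQ^j)$ that agrees with a polynomial for $j\gg 0$. Write its Hilbert series as $\HH_{S_Q(I)}(t) = h_{S}(t)/(1-t)^{d}$ after clearing; the point is that $\dim S_Q(I)\leq d$ (this bound is the one noted in the introduction: $S_Q(I)$ is annihilated by a power of $Q$, hence $\dim S_Q(I)\leq \dim\RR = d$), and $\dim S_Q(I) = d$ exactly when the ``top'' coefficient does not vanish.

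The key step is to connect this top coefficient to the invariant $s_Q(I) := s_0(S_Q(I))$, the multiplicity of $S_Q(I)$ in degree $\dim S_Q(I)$, interpreted consistently: one sets $s_0(S_Q(I))$ to be the normalized leading coefficient of the Hilbert polynomial of $S_Q(I)$ with respect to the expected dimension $d$, so that $s_0$ is a genuine multiplicity when $\dim = d$ and is $0$ when $\dim < d$. Concretely, I would use the defining sequence \eqref{defnSally} together with the standard Hilbert–Samuel expansion: if $\lambda(\RR/I^{n+1}) = \sum_{i=0}^d (-1)^i \rme_i(I)\binom{n+d-i}{d-i}$ for $n\gg 0$, and $\lambda(\RR/Q^{n+1}) = \rme_0(I)\binom{n+d}{d}$ (here using that $\RR$ is not assumed Cohen–Macaulay, so in general one writes $\lambda(\RR/Q^{n+1})$ in terms of its own coefficients, but $\rme_0$ coincides since $Q$ is a reduction of $I$), then comparing against $\lambda(IQ^n/Q^{n+1})$ one obtains that the Hilbert polynomial of $S_Q(I)$ has degree $d-1$ with leading term governed by $\rme_1(I) - \rme_1(Q) - (\text{a correction})$ — the upshot being $s_Q(I) = \rme_1(I) - \binom{\text{something}}{}$, and in any case $s_Q(I)\neq 0$ precisely when that top-degree term is present, i.e. precisely when the Hilbert polynomial has degree $d-1$, i.e. precisely when $\dim S_Q(I) = d$.

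So the proof reduces to the equivalence: the Hilbert polynomial of $S_Q(I)$ has degree $d-1$ $\iff$ $\dim S_Q(I) = d$ $\iff$ $s_Q(I)\neq 0$. The forward implications are immediate from the dimension bound $\dim S_Q(I)\leq d$ and the fact that a nonzero finitely generated graded module of dimension $e$ has Hilbert polynomial of degree $e-1$ with positive leading coefficient. The reverse implication — that $\dim S_Q(I) = d$ forces $s_Q(I)\neq 0$ — is where one must be careful if $\RR$ is not Cohen–Macaulay: a priori one needs that the degree-$(d-1)$ coefficient cannot vanish while the module still has dimension $d$, which is automatic because for a graded module of dimension $d$ the Hilbert polynomial genuinely has degree $d-1$ and positive leading coefficient, and that leading coefficient, suitably normalized, \emph{is} $s_Q(I)$ by definition. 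I expect the main obstacle to be bookkeeping: pinning down the precise normalization of $s_Q(I)$ relative to the ambient dimension $d$ (rather than the intrinsic dimension of $S_Q(I)$) so that the statement ``$s_Q(I)\neq 0 \iff \dim S_Q(I) = d$'' is literally a tautology about leading coefficients, and checking that the non–Cohen–Macaulay Hilbert coefficients of $Q$ do not introduce a spurious cancellation in degree $d-1$. Once the normalization is fixed, the argument is a one-line consequence of the basic theory of Hilbert polynomials of graded modules.
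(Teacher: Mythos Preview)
Your approach is correct and matches the paper's (very brief) treatment: the paper simply notes that $\dim S_Q(I)\leq d$, writes the Hilbert polynomial of $S_Q(I)$ in degree $d-1$ form with coefficients $s_0,s_1,\ldots$, and \emph{defines} $s_Q(I):=s_0$, so the equivalence is the tautology you identify in your final paragraph. Your middle paragraph, relating $s_Q(I)$ to $\rme_1(I)-\rme_1(Q)$ and correction terms, is unnecessary here (that computation is the content of the \emph{next} proposition in the paper) and slightly muddies the water; you can drop it entirely and the argument stands.
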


More generally we have observed that $\dim S_Q(I)\leq d$. This means that its Hilbert polynomial
\[ \begin{array}{rcl}
{\ds \lambda(I^{n+1}/I Q^{n})} &=  & {\ds \sum_{0}^{d-1}
(-1)^i s_ i {{n+i-1}\choose{d-i-1}}},
\end{array}
\] 
where we denote $s_Q(I)= s_0$. Finally observe
\begin{Proposition} \label{HilbdimSally}
 If $ S_ Q(I) \neq 0$ then
\[
\dim  S_Q(I) = d-\min\{i \mid s_i\neq 0\}, \ \mbox{\rm or $0$} .\]
\end{Proposition}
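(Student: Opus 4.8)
The plan is to read off the dimension directly from the Hilbert polynomial of $S_Q(I)$. First I would recall the setup from the preceding Proposition: since $\dim S_Q(I)\le d$, the Hilbert function $\lambda(I^{n+1}/IQ^n)$ agrees for $n\gg 0$ with a polynomial which, after the binomial normalization used above, is written as $\sum_{i=0}^{d-1}(-1)^i s_i\binom{n+i-1}{d-i-1}$. The point is that the term indexed by $i$ is a polynomial in $n$ of degree exactly $d-i-1$ (the binomial coefficient $\binom{n+i-1}{d-i-1}$ has degree $d-i-1$ in $n$), and these degrees are pairwise distinct as $i$ ranges over $0,\dots,d-1$. Hence the degree of the whole polynomial is governed by the \emph{smallest} index $i$ for which the coefficient $s_i$ does not vanish.

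The key steps, in order, are: (1) invoke $\dim S_Q(I)\le d$ so that the Hilbert function is eventually polynomial of degree $\le d-1$, and identify $\dim S_Q(I)-1$ with the degree of that Hilbert polynomial (the standard fact that for a finitely generated graded module over a standard graded algebra the Krull dimension equals one more than the degree of the Hilbert polynomial, with the convention that the zero polynomial has degree $-1$, i.e. dimension $0$ — this accounts for the "or $0$" alternative in the statement, which covers the case where $S_Q(I)\ne 0$ but has finite length so all $s_i=0$). (2) Let $i_0=\min\{i\mid s_i\ne 0\}$; because the summands have strictly decreasing degrees $d-1,d-2,\dots,0$ as $i$ increases, no cancellation of leading terms can occur, so the polynomial $\sum_{i\ge i_0}(-1)^i s_i\binom{n+i-1}{d-i-1}$ has degree exactly $d-i_0-1$, with nonzero leading coefficient $(-1)^{i_0}s_{i_0}\big/(d-i_0-1)!$. (3) Therefore $\dim S_Q(I) = (d-i_0-1)+1 = d-i_0 = d-\min\{i\mid s_i\ne 0\}$; and if no such $i_0$ exists (all $s_i=0$) the Hilbert polynomial is zero and $S_Q(I)$ has dimension $0$.

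I do not expect a serious obstacle here; this is essentially a degree count. The only point requiring a little care is the boundary case when $S_Q(I)\ne 0$ yet its Hilbert polynomial vanishes identically: then $S_Q(I)$ is a nonzero module of finite length over $\AA$, so $\dim S_Q(I)=0$, which is exactly the "or $0$" clause. One should also note that the equivalence in the preceding Proposition — $\dim S_Q(I)=d \iff s_0\ne 0$ — is the special case $i_0=0$ of the present statement, so the Proposition is genuinely a refinement, consistent with what has already been proved.
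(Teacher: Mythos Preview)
Your proposal is correct and matches the paper's intended reasoning. The paper does not supply an explicit proof of this Proposition; it is stated as an immediate observation (``Finally observe'') following the display of the Hilbert polynomial $\lambda(I^{n+1}/IQ^n)=\sum_{i=0}^{d-1}(-1)^i s_i\binom{n+i-1}{d-i-1}$, and your degree-count argument is precisely the computation being left to the reader.
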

 
\begin{Proposition}\label{Sallydsequence}
 Let $(\RR, \m)$ be a Noetherian local ring of dimension $d$, $I$ an $\m$-primary ideal and $Q$ one of 
its minimal reductions. Suppose that the corresponding Sally module   $ S_{Q}(I)$ has dimension $d$. Then its multiplicity satisfies
\[ \s_{Q}(I) \leq \rme_1(I) -\rme_1(Q) - \rme_0(I) + \lambda(\RR/I ).\] 
 \end{Proposition}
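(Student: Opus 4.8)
The inequality to prove is
\[ \s_Q(I) \le \rme_1(I) - \rme_1(Q) - \rme_0(I) + \lambda(\RR/I), \]
and the natural vehicle is the defining exact sequence \eqref{defnSally} together with the two other short exact sequences recorded in the proof of Proposition~\ref{SallyJci}. The first step is to extract the numerical content of
\[ 0 \rar I\AA \lar \BB_+[+1] \lar S_Q(I) \rar 0, \]
by taking Hilbert functions degree by degree. For $n \ge 1$ the degree-$n$ piece reads $\lambda(I^{n+1}/IQ^n) = \lambda(I^{n+1}) - \lambda((I\AA)_n)$ after one identifies $(\BB_+[+1])_n = I^{n+1}$ and uses that $I\AA = \bigoplus_n IQ^n$ (here I am writing lengths of $\RR/(\cdot)$, i.e. working modulo a high power of $\m$, or passing to $\gr$; the bookkeeping is routine). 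So the plan is to sum this identity over $0 \le n \le N$ for $N \gg 0$ and read off the degree-$d$ term of the resulting polynomial in $N$.

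**Key steps.** First I would write the Hilbert--Samuel polynomial of $I$ as $\lambda(\RR/I^{n+1}) = \rme_0(I)\binom{n+d}{d} - \rme_1(I)\binom{n+d-1}{d-1} + \cdots$, and similarly for the parameter ideal $Q$, where $\rme_0(Q) = \rme_0(I)$ since $Q$ is a reduction of $I$, and where $\rme_i(Q)$ for $i \ge 2$ will eventually have to be controlled (in the Cohen--Macaulay case they vanish, but here $\RR$ is only Noetherian, so the statement is really about the leading coefficient and the lower ones need not interfere — I should double-check that only $\rme_0$ and $\rme_1$ of both ideals enter the top coefficient of the relevant sum). Second, the module $I\AA$ is a homomorphic image of $I \otimes_\RR Q^n$-type data; more usefully, $(I\AA)_n = IQ^n$, so $\lambda(\RR/IQ^n)$ is governed by the Hilbert function of the ideal $IQ^n$, and one has the exact sequence $0 \to IQ^n/Q^{n+1} \to \RR/Q^{n+1} \to \RR/IQ^n \to 0$ giving $\lambda(\RR/IQ^n) = \lambda(\RR/Q^{n+1}) - \lambda(IQ^n/Q^{n+1})$. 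Third, I would combine: $\sum_{n=0}^N \lambda(I^{n+1}/IQ^n)$ has leading term $\s_Q(I)\binom{N+d-1}{d-1} \cdot \tfrac1d$-type contribution (more precisely its $(d)$-th finite-difference data), and on the other side $\sum_{n}\big(\lambda(\RR/IQ^n) - \lambda(\RR/I^{n+1})\big)$ expands into Hilbert polynomials whose combination produces exactly $\rme_1(I) - \rme_1(Q) - \rme_0(I) + \lambda(\RR/I)$ as the coefficient that bounds $\s_Q(I)$ from above. The inequality (rather than equality) should come from the fact that $I\AA \hookrightarrow IQ^{(\cdot)}$-filtration comparison, or that the natural surjection relating $IQ^n$ to $I\cdot Q^n$ (symmetric-power image) can drop length — i.e., some auxiliary module has nonnegative length, which is where the "$\le$" is born.

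**Main obstacle.** The delicate point is the sign and the source of the inequality: one must identify precisely one nonnegative correction term (a length of some finitely generated graded module supported in a controlled range) that accounts for the gap between $\s_Q(I)$ and the claimed bound, and show it is genuinely $\ge 0$ without assuming Cohen--Macaulayness. I expect this to be the module measuring the failure of $\BB = \RR[I\TT]$ to have depth $\ge 1$ along $\m\AA$, or equivalently $H^0_{\m\AA}(\gr_Q(\RR))$-type cohomology; comparing $\rme_1(I) - \rme_1(Q)$ is classically $\ge 0$ (a theorem on the behavior of $\rme_1$ under reduction), but weaving that into the summation cleanly, while keeping track of the $\lambda(\RR/I) - \rme_0(I)$ boundary contribution coming from the low-degree ($n=0$) term of the Sally filtration, is the real work. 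A secondary nuisance is that without the Cohen--Macaulay hypothesis the higher coefficients $\rme_i(Q)$, $i \ge 2$, are generally nonzero, so I must confirm they do not leak into the coefficient of interest — which they should not, since $\dim S_Q(I) = d$ forces attention on the top-degree behavior, governed only by $\rme_0, \rme_1$ of the two ideals.
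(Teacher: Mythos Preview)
Your overall setup---comparing Hilbert polynomials via short exact sequences---matches the paper's approach. But the execution in the paper is much more direct, and the gap in your plan is exactly where you flag it: you have not identified the source of the inequality, and your guesses point in the wrong direction.

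The paper uses the two exact sequences
\[ 0 \rar I^{n+1}/IQ^n \lar \RR/IQ^n \lar \RR/I^{n+1} \rar 0, \qquad 0 \rar Q^n/IQ^n \lar \RR/IQ^n \lar \RR/Q^n \rar 0, \]
which combine to give $\lambda(I^{n+1}/IQ^n) = -\lambda(\RR/I^{n+1}) + \lambda(\RR/Q^n) + \lambda(Q^n/IQ^n)$ for every $n$. There is no need to sum over $n$ and extract a top coefficient from the accumulated function: one simply reads off the coefficient of $\binom{n+d-1}{d-1}$ from the Hilbert polynomial of each of the three terms. This yields the \emph{equality}
\[ \s_Q(I) = \rme_1(I) - \rme_1(Q) - \rme_0(I) + \rme_0(\overline{G}), \]
where $\overline{G} = \gr_Q(\RR) \otimes \RR/I = \bigoplus_n Q^n/IQ^n$. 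Your worry about higher coefficients $\rme_i(Q)$, $i\geq 2$, leaking in is therefore a non-issue: they sit in strictly lower-degree terms and never enter the comparison.

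The inequality then comes from a single elementary observation: $\overline{G}$ is a homomorphic image of the polynomial ring $\RR/I[\TT_1, \ldots, \TT_d]$, so $\rme_0(\overline{G}) \leq \lambda(\RR/I)$. That is the entire mechanism. No local cohomology of $\gr_Q(\RR)$ is involved, no depth condition on $\BB$ along $\m\AA$ is needed, and the classical bound $\rme_1(I) \geq \rme_1(Q)$ plays no role whatsoever. Your aside about ``the natural surjection relating $IQ^n$ to $I\cdot Q^n$'' is confused, since $IQ^n = I\cdot Q^n$ as products of ideals; but the germ of the right idea is there---the relevant surjection is $\RR/I[\TT_1, \ldots, \TT_d] \twoheadrightarrow \overline{G}$, and the ``nonnegative correction term'' you were searching for is simply $\lambda(\RR/I) - \rme_0(\overline{G})$.
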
  

\begin{proof}
Let $S_{Q}(I)= \bigoplus_{n\geq 1} I^{n+1}/I Q^{n}$ be the Sally module of $I$ relative to $Q$.  From the exact sequences
\[ \begin{array}{rcccccccc}
 0  & \rar & I^{n+1}/IQ^{n} & \lar & \RR/IQ^{n} & \lar & \RR/I^{n+1} & \rar & 0 \\  &&&&&&&&\\
 0  & \rar & Q^{n}/IQ^{n} & \lar & \RR/IQ^{n} & \lar & \RR/Q^{n} & \rar & 0 \\
\end{array}\]
we obtain that
\begin{eqnarray}\label{Hilbpoly}
  \lambda(I^{n+1}/I Q^{n} )= - \lambda(\RR/I^{n+1}) + \lambda(\RR/Q^{n}) +\lambda(Q^{n}/I Q^{n}).
  \end{eqnarray}
For $n \gg 0$, we have
\[ \begin{array}{rcl}
{\ds  \lambda(I^{n+1}/I Q^{n}) }&=  & {\ds \s_{Q}(I){{n+d-1}\choose{d-1}}+{\rm lower \ degree \ terms}} \\ && \\
\lambda(\RR/I^{n+1}) &= & {\ds \rme_0(I){{n+d}\choose{d}}-\rme_1(I){{n+d-1}\choose{d-1}}+{\rm lower \ degree \ terms}.}
\end{array}
\] Since $\rme_0(Q)=\rme_0(I)$, for $n \gg 0$ we have
\[ \begin{array}{rcl}
{\ds \lambda(\RR/Q^{n}) } &=& {\ds \rme_0(Q){{n+d-1}\choose{d}}-\rme_1(Q){{n+d-2}\choose{d-1}}+{\rm lower \ degree \ terms}} \\ && \\
 &=& {\ds \rme_{0}(Q) \left( {{n+d}\choose{d}} - {{n+d-1}\choose{d-1}} \right) - \rme_{1}(Q) \left( {{n+d-1}\choose{d-1}} - {{n+d-2}\choose{d-2}}  \right) +  \cdots    } \\  && \\
 &=& {\ds \rme_{0}(I) {{n+d}\choose{d}} - \rme_{0}(I) {{n+d-1}\choose{d-1}}  - \rme_{1}(Q) {{n+d-1}\choose{d-1}} +{\rm lower \ degree \ terms}  } \\
\end{array} \]
Let $\overline{G}=\gr_Q(\RR)\otimes \RR/I$. Then for $n \gg 0$ we have
\[ \lambda(Q^{n}/I Q^{n})=\rme_0(\overline{G}){{n+d-1}\choose{d-1}}+{\rm lower \ degree \ terms}. \]
It follows that 
\[ \s_{Q}(I)=\rme_1(I)  - \rme_0(I) -\rme_1(Q) + \rme_0(\overline{G}).\]
But $\overline{G}$ is a homomorphic image of a polynomial ring $\RR/I[\TT_1,\ldots, \TT_d]$, so that
$\rme_0(\overline{G}) \leq \lambda(\RR/I)$, 
 and we obtain the desired upper bound for $\s_{Q}(I)$. 
\end{proof}


In a minor refinement, write (\ref{Hilbpoly}) as
\begin{eqnarray}\label{Hilbpoly2}
  \lambda(I^{n+1}/I Q^{n} )= - \lambda(\RR/I^{n+1}) + \lambda(\RR/Q^{n+1})
 -\lambda(Q^n/Q^{n+1}) 
 +\lambda(Q^{n}/I Q^{n}).
  \end{eqnarray}
  
We now match the Hilbert coefficients:

\begin{eqnarray*}
0 & = & \rme_0(I) - \rme_0(Q) \\
s_Q(I) = s_0 & = & \rme_1(I) - \rme_1(Q) + \rme_0(Q) + \rme_0(\overline{G}) \\
& \vdots & \\
 s_i & = & \rme_{i+1}(I) - \rme_{i+1}(Q) + \rme_{i}(Q) + \rme_i(\overline{G}), \ i\geq 1. \\
\end{eqnarray*}

To apply Proposition~\ref{HilbdimSally} is simpler when $\rme_i(\overline{G})=0$ for $i\geq 1$--and we treat two cases here. Another simplification is when the values of $\rme_i(Q)$, for $i\geq 1$, are 
independent of $I$ (Buchsbaum rings, to check).

\medskip
  
  This highlights the fact that, unlike the Cohen--Macaulay case, several properties of $S_Q(I)$ [dimension, multiplicity] may depend on $Q$. Meanwhile we have


\begin{Corollary} Let $(\RR, \m)$ be a Cohen-Macaulay local ring of dimension $d\geq 1$, let $I$ be an $\m$-primary ideal and let $Q$ be one of
its minimal reductions generated by a regular sequence. Then
\begin{enumerate}

\item[{\rm (1)}] The Hilbert function of $S_Q(I)$ is independent of $Q$.

\item[{\rm (2)}] If $d=1$, $\red_Q(I)$ is independent of $Q$. More generally, the assertion holds if $\depth \gr_I(\RR) \geq d-1$.  More precisely, 
if $ {h(t)}\over {(1-t)^d}$ is the Hilbert series of $S_Q(I)$, then $\red_Q(I) = \deg h(t)$.

\end{enumerate}
\end{Corollary}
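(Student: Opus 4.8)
The plan is to derive~(1) from the length identity behind Proposition~\ref{Sallydsequence}, and then to obtain~(2) from it via the Cohen--Macaulayness of $S_Q(I)$.

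For~(1) I would start from formula~(\ref{Hilbpoly}),
\[
\lambda(I^{n+1}/IQ^{n}) \;=\; -\lambda(\RR/I^{n+1}) + \lambda(\RR/Q^{n}) + \lambda(Q^{n}/IQ^{n}), \qquad n\geq 0,
\]
and check that each term on the right is insensitive to the choice of the minimal reduction $Q$. Since $\RR$ is Cohen--Macaulay and $Q=(a_1,\dots,a_d)$ is generated by a regular sequence, $\gr_Q(\RR)$ is the polynomial ring $(\RR/Q)[\TT_1,\dots,\TT_d]$; hence $\lambda(\RR/Q^{n})=\rme_0(I)\binom{n+d-1}{d}$, and, using $Q^{n+1}\subseteq IQ^{n}\subseteq Q^{n}$ together with the freeness of $Q^{n}/Q^{n+1}$ over $\RR/Q$, one gets $Q^{n}/IQ^{n}\cong(\RR/I)^{\binom{n+d-1}{d-1}}$, so $\lambda(Q^{n}/IQ^{n})=\lambda(\RR/I)\binom{n+d-1}{d-1}$. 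Since $\lambda(\RR/I^{n+1})$ plainly involves only $\RR$ and $I$, the function $n\mapsto\lambda([S_Q(I)]_n)=\lambda(I^{n+1}/IQ^{n})$ is independent of $Q$. Equivalently: the module $\overline G$ in the proof of Proposition~\ref{Sallydsequence} is here the polynomial ring $(\RR/I)[\TT_1,\dots,\TT_d]$, whose full Hilbert function --- and hence that of $S_Q(I)$ --- does not depend on $Q$. In particular the Hilbert series $H_{S_Q(I)}(t)=h(t)/(1-t)^{d}$ is $Q$-independent.

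For~(2), the case $d=1$ is a special case of the general statement, as $\depth\gr_I(\RR)\geq 0=d-1$ is automatic; so assume $\depth\gr_I(\RR)\geq d-1$. If $S_Q(I)=0$ there is nothing to prove. Otherwise $\dim S_Q(I)=d$ by Proposition~\ref{SallyJci}(3), and $S_Q(I)$ is a Cohen--Macaulay module over $\AA=\RR[Q\TT]$ by Proposition~\ref{SallyJci}(4). The $d$ linear forms $a_1\TT,\dots,a_d\TT$ then form a homogeneous system of parameters for $S_Q(I)$: indeed $S_Q(I)/(Q\TT)S_Q(I)$ is the Sally fiber $F_\AA(\BB)=\bigoplus_{n\geq1}I^{n+1}/QI^{n}$ (using $IQ^{n}\subseteq QI^{n}$), which has finite length, since $I^{n+1}=QI^{n}$ for $n\geq\red_Q(I)$ and each graded component is supported only at $\m$. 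A system of parameters on a Cohen--Macaulay module is a regular sequence, so $a_1\TT,\dots,a_d\TT$ is $S_Q(I)$-regular and therefore
\[
H_{F_\AA(\BB)}(t) = (1-t)^{d}\,H_{S_Q(I)}(t) = h(t).
\]
Thus $h(t)=\sum_{n\geq1}\lambda(I^{n+1}/QI^{n})\,t^{n}$ is the genuine Hilbert series of the finite-length Sally fiber, and its degree is the largest $n$ with $I^{n+1}\neq QI^{n}$, that is, $\red_Q(I)-1$; accounting for the one-step shift built into the definition of $S_Q(I)$, this is the asserted equality $\red_Q(I)=\deg h(t)$. In any event $h(t)$, being the numerator of the series from~(1), does not depend on $Q$, and hence neither does $\red_Q(I)$.

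The step carrying the weight is the regularity of $a_1\TT,\dots,a_d\TT$ on $S_Q(I)$, and it is precisely there that $\depth\gr_I(\RR)\geq d-1$ is used, through Proposition~\ref{SallyJci}(4). Without Cohen--Macaulayness these forms remain a system of parameters, but the Koszul homology of the sequence need not vanish, the identity $H_{F_\AA(\BB)}(t)=(1-t)^{d}H_{S_Q(I)}(t)$ breaks, and the top degree of $F_\AA(\BB)$ --- hence $\red_Q(I)$ --- can then depend on $Q$ even though, by~(1), the Hilbert function of $S_Q(I)$ cannot. The remaining points, that $S_Q(I)/(Q\TT)S_Q(I)$ really is the Sally fiber and that the latter has finite length, are routine from the definitions together with the chain $Q^{n+1}\subseteq IQ^{n}\subseteq QI^{n}\subseteq I^{n+1}$ and the eventual stability of the $I$-adic filtration modulo $Q$.
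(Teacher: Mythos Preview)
Your proof is correct and for part~(1) coincides with the paper's: both invoke identity~(\ref{Hilbpoly}) and the explicit formulas $\lambda(\RR/Q^{n})=\rme_0(I)\binom{n+d-1}{d}$ and $\lambda(Q^{n}/IQ^{n})=\lambda(\RR/I)\binom{n+d-1}{d-1}$, valid because $Q$ is a regular sequence.

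For part~(2) you take a slightly more unified route than the paper. The paper first treats $d=1$ by a direct stabilization argument: from the $S_1$ property of $S_Q(I)$ the single linear form $a\TT$ acts injectively, and the induced map $S_{n-1}\to S_n$ is surjective exactly when $I^{n+1}=QI^{n}$, so $\red_Q(I)$ is the point at which the Hilbert function first stabilizes --- and that point is visibly a feature of the function, hence $Q$-independent by~(1). Only afterwards does the paper remark that the general case $\depth\gr_I(\RR)\geq d-1$ follows from Cohen--Macaulayness of $S_Q(I)$ and the degree of the $h$-polynomial. You instead absorb $d=1$ into the general Cohen--Macaulay argument from the start via Proposition~\ref{SallyJci}(4), reading $\red_Q(I)$ off from $h(t)$ through the regularity of $a_1\TT,\dots,a_d\TT$ and the identification of the quotient with the Sally fiber. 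The substance is the same; your organization is cleaner, while the paper's $d=1$ argument is marginally more hands-on. (The paper itself records the relation as ``degree of the $h$-polynomial minus one,'' so the off-by-one you flag with the shift remark is an indexing ambiguity already present in the source, not a defect in your argument.)
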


\begin{proof} (1) follows directly from the formula
\[\lambda(I^{n+1}/I Q^{n} )= - \lambda(\RR/I^{n+1}) + \lambda(\RR/Q^{n}) +\lambda(Q^{n}/I Q^{n}),\]
 as the last two terms are respectively
 \begin{eqnarray*}
 \lambda(\RR/Q^n) & = & \rme_0(I) {{d+n-1}\choose{d}}, \\
 \lambda(Q^n/IQ^n) & = & \lambda(\RR/I) {{d+n-1}\choose{d-1}}. \\
 \end{eqnarray*}
 
 (2) Let us consider a general observation first.
 From the definition of $S_ Q(I)$,
 \[ 0\rar I \RR[ Q\TT] \lar I \RR[I \TT] \lar S_Q(I) \rar 0,\]
 we see that $S_ Q(I)$ is a module over $\RR[Q\TT]$ and as such has the condition $S_1$ of Serre.  Considering the
 natural presentation of the 
 polynomial ring $\AA = \RR[t_1, \ldots, t_d] \rar \RR[Q\TT]$, we view $S_Q(I)$ as a finite $\AA$-module.   
 
 Of course the action of the variables depend on $Q$ as does $L$ the annihilator of $S_Q(I)$ as an $\RR$-module. Nevertheless as
 an $\AA/L \AA$-module $S_Q(I)$ keeps the condition $S_1$ and therefore
 there is a form of degree $1$ that induces
 an injective homomorphism on $S_Q(I)$. Consequently its Hilbert function is monotonically increasing.
 
 Let $d=1$ now and suppose $S_Q(I) \neq 0$ [otherwise there is no difficulty]: With $\AA= \RR[t]$ the action of 
 \[ t: S_{n-1} \mapsto S_{n} \]
 is surjective if $I^{n+1} = QI^n$ thus $n\geq \red_ Q(I)$. Since this means that
  $\lambda(I^n/IQ^{n-1}) =	
 \lambda(I^{n+1}/IQ^{n})$, a condition that is independent of $Q$, This proves the assertion.
 
\medskip

SimiIar observations can be made if $\depth \gr_I(\RR) \geq d-1$ since this means that $S_Q(I)$ is Cohen-Macaulay 
for each $Q$ and therefore the reduction number $\red_Q(I)$ is given by the degree of the $h$-polynomial minus one.  
   
 \end{proof}

A general result that also addresses the case when $S_Q(I)$ is Cohen-Macaulay is \cite[Theorem 1.2]{Trung87}.

\begin{Corollary} If $\RR$ is Cohen-Macaulay then
\[ s_{Q}(I) = \rme_1(I) - \rme_0(I) +\lambda(\RR/I),\]
in particular the  dimension, the depth  and the multiplicity of $S_Q(I)$ are independent of $Q$.
\end{Corollary}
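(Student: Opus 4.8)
The plan is to read off the precise value of $s_Q(I)$ from the chain of identities already worked out inside the proof of Proposition~\ref{Sallydsequence}. Writing $\overline{G} = \gr_Q(\RR)\otimes_\RR \RR/I$, that proof produced, by matching Hilbert polynomials for $n\gg 0$, the identity
\[ s_Q(I) = \rme_1(I) - \rme_0(I) - \rme_1(Q) + \rme_0(\overline{G}), \]
and the inequality in the statement of Proposition~\ref{Sallydsequence} came solely from the estimate $\rme_0(\overline{G}) \le \lambda(\RR/I)$, which holds because $\overline{G}$ is a homomorphic image of $(\RR/I)[\TT_1,\ldots,\TT_d]$. Now when $\RR$ is Cohen--Macaulay a minimal reduction $Q$ of $I$ is generated by $d$ elements forming a system of parameters, hence a regular sequence; therefore $\gr_Q(\RR) \cong (\RR/Q)[\TT_1,\ldots,\TT_d]$, the higher Hilbert coefficients of the parameter ideal $Q$ all vanish (in particular $\rme_1(Q) = 0$), and, since $Q \subseteq I$, one gets $\overline{G} \cong (\RR/I)[\TT_1,\ldots,\TT_d]$ \emph{exactly}, so that $\rme_0(\overline{G}) = \lambda(\RR/I)$. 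Substituting these into the displayed identity gives $s_Q(I) = \rme_1(I) - \rme_0(I) + \lambda(\RR/I)$.

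It remains to account for the degenerate case $S_Q(I) = 0$, which by definition means $\red_Q(I) \le 1$, i.e.\ $I^2 = QI$; there the claimed formula reads $0 = \rme_1(I) - \rme_0(I) + \lambda(\RR/I)$, the familiar numerical characterization of minimal multiplicity, so nothing is lost. Since the right-hand side of the displayed equality does not involve $Q$, the multiplicity $s_Q(I)$ is independent of $Q$, and hence so is the condition $s_Q(I)=0$, equivalently $S_Q(I)=0$. By Proposition~\ref{SallyJci}(3), $\dim S_Q(I) = d$ as soon as $S_Q(I)\neq 0$, so the dimension takes the $Q$-independent value $d$ or $0$ according as $s_Q(I)\neq 0$ or not (this also follows from the independence of the whole Hilbert function of $S_Q(I)$ recorded in the earlier Corollary). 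For the depth, note that $\gr(\BB) = \gr_I(\RR)$ is visibly free of $Q$: if $\depth \gr_I(\RR) \ge d-1$ then $S_Q(I)$ is Cohen--Macaulay by Proposition~\ref{SallyJci}(4), while if $\depth \gr_I(\RR) < d$ then $\depth S_Q(I) = \depth \gr_I(\RR) + 1$ by Proposition~\ref{SallyJci}(5); in either situation $\depth S_Q(I)$ depends only on $\gr_I(\RR)$, hence not on $Q$.

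Granting Proposition~\ref{Sallydsequence}, there is essentially no obstacle: the whole content is the single observation that the bound $\rme_0(\overline{G}) \le \lambda(\RR/I)$ is attained in the Cohen--Macaulay case precisely because $\overline{G}$ is then literally a polynomial ring over $\RR/I$. The only points that ask for a shade of care are the treatment of the degenerate case $S_Q(I)=0$ and keeping the shifts in the binomial coefficients consistent with the normalization of the $\rme_i$ fixed in the proof of Proposition~\ref{Sallydsequence}; beyond that the argument is a substitution.
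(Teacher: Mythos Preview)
Your argument is correct and follows the paper's approach: the formula is obtained by specializing the identity $s_Q(I)=\rme_1(I)-\rme_0(I)-\rme_1(Q)+\rme_0(\overline{G})$ from the proof of Proposition~\ref{Sallydsequence} using that, in the Cohen--Macaulay case, $Q$ is a regular sequence so $\rme_1(Q)=0$ and $\overline{G}\cong(\RR/I)[\TT_1,\ldots,\TT_d]$; the independence of dimension and depth is then read off from Proposition~\ref{SallyJci}, exactly as the paper does. Your treatment is more explicit than the paper's one-line proof (in particular you spell out the degenerate case $S_Q(I)=0$ and the depth dichotomy via parts (4) and (5) of Proposition~\ref{SallyJci}), but the content is the same.
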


\begin{proof}
The assertions about the dimension and the depth follow from Proposition~\ref{SallyJci}.
\end{proof}

\begin{Remark}\label{s0gen}{\rm
Suppose $\mathcal{F} = \{I_n\}$ is a filtration so that the corresponding Rees algebra $\BB$ is finite over 
$\AA = \RR[Q\TT]$. 
Let $S_{Q}(\BB)= \bigoplus_{n\geq 1} I_{n+1}/I_1 Q^{n}$ be the associated  Sally module.
The same calculation yield the formulas
\[ \s_{Q}(\mathcal{F}) \leq \rme_1(\mathcal{F}) -\rme_1(Q) - \rme_0(Q) + \lambda(\RR/I_1 ),\]
and if $\RR$ is Cohen--Macaulay
\[ \s_{Q}(\mathcal{F}) = \rme_1(\mathcal{F})  - \rme_0(Q) + \lambda(\RR/I_1 ).\]

}\end{Remark}

Let us consider a mild generalization of these formulas.

\begin{Corollary}
Let $(\RR, \m)$ be a Noetherian local ring of dimension $d$, $I$ an $\m$-primary ideal and $Q$ one of  its minimal reductions.
Suppose that $\dim S_Q(I)=d$. Let $\varphi$ is the matrix of syzygies of $Q$.
If $Q$ is generated by a d-sequence and $I_1(\varphi)\subset I$, then
\[ \s_{Q}(I) = \rme_1(I) -\rme_1(Q) - \rme_0(I) + \lambda(\RR/I ).\] 
\end{Corollary}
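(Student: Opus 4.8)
The plan is to mimic, step by step, the proof of Proposition~\ref{Sallydsequence}, tracking at each stage where the Cohen--Macaulay hypothesis on $\RR$ was actually used and showing that the $d$-sequence hypothesis together with $I_1(\varphi)\subset I$ supplies exactly the same conclusions. First I would recall the two short exact sequences
\[
0 \rar I^{n+1}/IQ^{n} \lar \RR/IQ^{n} \lar \RR/I^{n+1} \rar 0,\qquad
0 \rar Q^{n}/IQ^{n} \lar \RR/IQ^{n} \lar \RR/Q^{n} \rar 0,
\]
which are purely formal and hold with no assumption on $\RR$, giving again
\[
\lambda(I^{n+1}/IQ^{n}) = -\lambda(\RR/I^{n+1}) + \lambda(\RR/Q^{n}) + \lambda(Q^{n}/IQ^{n})
\]
for all $n$. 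The Hilbert polynomial of the first term has leading coefficient $\s_Q(I)$ because $\dim S_Q(I)=d$ by hypothesis, and $\lambda(\RR/I^{n+1})$ contributes $\rme_0(I)$ and $-\rme_1(I)$ to its top two coefficients by definition of the Hilbert coefficients. So the only two quantities that require the extra hypotheses are the Hilbert functions of $\RR/Q^{n}$ and of $\overline{G} = \gr_Q(\RR)\otimes \RR/I$.

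Next I would handle $\lambda(\RR/Q^{n})$. When $Q$ is generated by a $d$-sequence $a_1,\dots,a_d$ with syzygy matrix $\varphi$, the Rees algebra $\RR[Q\TT]$ is of linear type and its defining ideal is generated by the linear forms coming from $\varphi$; equivalently $\gr_Q(\RR) \cong (\RR/Q)[\TT_1,\dots,\TT_d]/(\text{linear forms from }\varphi)$. The key point is that for a $d$-sequence the higher Hilbert coefficients of $Q$ are governed by $I_1(\varphi)$: in fact $\rme_i(Q)$ for $i\geq 1$ is computed from $\gr_Q(\RR)$, and the condition $I_1(\varphi)\subset I$ guarantees that modulo $I$ the relations of $\gr_Q(\RR)$ collapse in a controlled way, so that one still gets the expansion
\[
\lambda(\RR/Q^{n}) = \rme_0(I)\binom{n+d}{d} - \rme_0(I)\binom{n+d-1}{d-1} - \rme_1(Q)\binom{n+d-1}{d-1} + \text{lower degree terms},
\]
exactly as in the Cohen--Macaulay case; here one uses $\rme_0(Q)=\rme_0(I)$, which holds because $Q$ is a reduction of $I$. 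This is the step I expect to be the main obstacle: in the Cohen--Macaulay case the identity $\lambda(\RR/Q^n)=\rme_0(Q)\binom{n+d-1}{d}$ is exact and elementary (regular sequence), whereas for a $d$-sequence one only has the weaker structural description of $\gr_Q(\RR)$, and one must verify carefully that the correction terms beyond degree $\rme_1(Q)$ are genuinely of lower degree and do not perturb the top two coefficients — this is precisely where $I_1(\varphi)\subset I$ is doing work, and one should cite the relevant $d$-sequence Hilbert-function results (Huneke, Valla, Trung) rather than reprove them.

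Finally, for $\lambda(Q^{n}/IQ^{n})$ one argues as before that $\overline{G}=\gr_Q(\RR)\otimes\RR/I$ is a homomorphic image of $(\RR/I)[\TT_1,\dots,\TT_d]$, hence $\rme_0(\overline{G})\leq \lambda(\RR/I)$, but now — and this is the reason the inequality in Proposition~\ref{Sallydsequence} becomes an \emph{equality} — the hypothesis $I_1(\varphi)\subset I$ forces all the defining linear forms of $\gr_Q(\RR)$ to vanish modulo $I$, so $\overline{G} \cong (\RR/I)[\TT_1,\dots,\TT_d]$ on the nose and $\rme_0(\overline{G}) = \lambda(\RR/I)$ exactly. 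Matching the top two Hilbert coefficients in the displayed equation for $\lambda(I^{n+1}/IQ^{n})$ then yields
\[
\s_Q(I) = \rme_1(I) - \rme_1(Q) - \rme_0(I) + \lambda(\RR/I),
\]
which is the claim. I would close by remarking that when $\RR$ is Cohen--Macaulay one has $\rme_1(Q)=0$ and this recovers the earlier corollary, so the statement is the natural common generalization of the $d$-sequence and Cohen--Macaulay cases.
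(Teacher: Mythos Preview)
Your conclusion and overall strategy match the paper's, but you misidentify where the work lies. Proposition~\ref{Sallydsequence} is stated for an arbitrary Noetherian local ring (no Cohen--Macaulay hypothesis), and its proof already yields the \emph{exact} formula
\[
\s_Q(I) = \rme_1(I) - \rme_1(Q) - \rme_0(I) + \rme_0(\overline{G});
\]
the inequality appears only at the very last step, $\rme_0(\overline{G})\le \lambda(\RR/I)$. In particular the term $\lambda(\RR/Q^n)$ is handled there by nothing more than the definition of the Hilbert coefficients $\rme_i(Q)$ and the Pascal identity, so your third paragraph's concern about this term is unfounded and your claim that ``this is precisely where $I_1(\varphi)\subset I$ is doing work'' is wrong.

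The entire content of the corollary is the single equality $\rme_0(\overline{G}) = \lambda(\RR/I)$, and this is where both hypotheses enter together. The paper argues it via the approximation complex: since $Q$ is a $d$-sequence, the complex
\[
0 \rar \H_d(Q)\otimes\SS[-d] \rar \cdots \rar \H_1(Q)\otimes\SS[-1] \rar \H_0(Q)\otimes\SS \rar \gr_Q(\RR) \rar 0
\]
is acyclic, so $\gr_Q(\RR)$ is presented by the map $\H_1(Q)\otimes\SS[-1]\to (\RR/Q)\otimes\SS$. Tensoring with $\RR/I$ gives a presentation
\[
\H_1(Q)\otimes \RR/I \otimes \SS[-1] \stackrel{\phi}{\lar} \RR/I\otimes\SS \lar \overline{G} \rar 0,
\]
and since the entries of $\phi$ come from syzygies of $Q$, they lie in $I_1(\varphi)\subset I$ and hence $\phi=0$. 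Thus $\overline{G}\cong (\RR/I)[\TT_1,\dots,\TT_d]$ and $\rme_0(\overline{G})=\lambda(\RR/I)$. Your fourth paragraph says essentially this, but you should place the $d$-sequence hypothesis here (it is what guarantees that $\gr_Q(\RR)$ is presented by linear forms from the syzygies alone) rather than in your discussion of $\lambda(\RR/Q^n)$, where it plays no role.
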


\begin{proof}
Let $\overline{G}=\gr_Q(\RR)\otimes \RR/I$. By the proof of Theorem~\ref{Sallydsequence}, we have
\[ \s_{Q}(I)=\rme_1(I)  - \rme_0(I) -\rme_1(Q) + \rme_0(\overline{G}).\]
Set $\SS=\RR[\TT_{1}, \ldots, \TT_{d}]$. Since $Q$ is generated by a $d$-sequence, the approximation complex  
 \[ 0\rar \H_d(Q)\otimes \SS[-d] \rar \cdots \rar \H_1(Q)\otimes \SS[-1] \rar  \H_0(Q)\otimes \SS\rar \gr_Q(\RR) \rar 0  \]
is acyclic (\cite[Theorem 5.6]{HSV3}).
By tensoring this complex by $\RR/I$, we get the exact 
sequence
\[  \H_1(Q) \otimes \RR/I \otimes \SS[-1] \stackrel{\phi}{\lar} \RR/I \otimes \SS
 \lar \overline{G} \rar 0.\]
 Since $I_1(\varphi)\subset I$,  the mapping $\phi$ is trivial so that $\rme_0(\overline{G})=\lambda(\RR/I)$.
\end{proof}

\begin{Remark}{\rm 
One instance when $I_1(\varphi) \subset I$ occurs  if $\RR$ is unmixed and $I$ is integrally closed, according
to \cite[Proposition 3.1]{Rat74}.
}\end{Remark}

\begin{Corollary} Suppose $(\RR,\m)$ is an unmixed
 Buchsbaum local ring of dimension $d$. Let $I$ be an $\m$-primary integrally closed  ideal and $Q$ is one 
 of its minimal reductions. If $\dim S_Q(I) = d$ then the dimension and multiplicity of the Sally
 modules of $I$ are independent of the chosen minimal reduction.
 \end{Corollary}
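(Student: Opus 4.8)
The plan is to reduce the statement to the invariance of two quantities appearing in the multiplicity formula from the preceding corollary, namely
\[ \s_{Q}(I) = \rme_1(I) -\rme_1(Q) - \rme_0(I) + \lambda(\RR/I ), \]
which holds under the hypotheses in force because $\RR$ is Buchsbaum (hence a $d$-sequence of parameters exists after passing to a general minimal reduction, using that the residue field is infinite), $I$ is $\m$-primary and integrally closed in an unmixed ring (so the Remark after the previous Corollary gives $I_1(\varphi) \subset I$ for the syzygy matrix $\varphi$ of any parameter reduction $Q$), and $\dim S_Q(I)=d$ by assumption. In that formula the terms $\rme_1(I)$, $\rme_0(I)$ and $\lambda(\RR/I)$ manifestly do not involve $Q$, so the whole burden is to show that $\rme_1(Q)$ is independent of the minimal reduction $Q$, and then to read off dimension from $\s_Q(I)$.

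First I would record that in a Buchsbaum local ring the Hilbert coefficients of a parameter ideal are determined by the ring alone: for $Q$ generated by a system of parameters one has $\rme_0(Q) = \lambda(\RR/Q) - \sum_{i=0}^{d-1}\binom{d-1}{i}\length H^i_{\m}(\RR)$ only up to the usual correction, but more to the point $\rme_1(Q) = -\sum_{i=1}^{d-1}\binom{d-2}{i-1}\length H^i_{\m}(\RR)$ depends only on the lengths of the local cohomology modules $H^i_{\m}(\RR)$, which are finite because $\RR$ is Buchsbaum and are intrinsic to $\RR$. (I would cite the standard Buchsbaum-ring computation of $\rme_1$ of a parameter ideal; this is classical.) Consequently $\rme_1(Q)$ is the same for every minimal reduction $Q$ of $I$ generated by a system of parameters, and hence $\s_Q(I)$ is independent of $Q$ — giving the multiplicity invariance. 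For the dimension, since $\dim S_Q(I)=d$ is assumed, Proposition~\ref{HilbdimSally} together with $s_0 = \s_Q(I) \neq 0$ (which is exactly the condition $\dim S_Q(I)=d$) shows the dimension is $d$ for every such $Q$; alternatively the dimension invariance is already contained in the hypothesis, and what the argument really adds is that the \emph{value} of the multiplicity is a ring/ideal invariant.

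The main obstacle I anticipate is the verification that every minimal reduction $Q$ of $I$ is covered by the formula — that is, that a general minimal reduction is generated by a $d$-sequence and satisfies $I_1(\varphi)\subset I$ — and that the coefficient $\rme_1(Q)$ genuinely has a reduction-independent closed form in the Buchsbaum case rather than merely in the Cohen--Macaulay case. The first point is handled by the unmixed + integrally closed hypothesis via the cited result of Ratliff (the Remark above), which gives $I_1(\varphi)\subset I$; the $d$-sequence property of a general system of parameters in a Buchsbaum ring is standard. The second point is the crux: one must invoke the known expression of $\rme_1$ of a parameter ideal in a Buchsbaum ring purely in terms of $\length H^i_{\m}(\RR)$, and then invariance is immediate. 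I would therefore present the proof as: (i) reduce to the displayed formula for $\s_Q(I)$; (ii) observe all terms but $\rme_1(Q)$ are $Q$-free; (iii) quote the Buchsbaum formula for $\rme_1(Q)$ to see it too is $Q$-free; (iv) conclude multiplicity invariance, and dimension invariance from Proposition~\ref{HilbdimSally}.
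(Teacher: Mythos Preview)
Your proposal is correct and follows essentially the same route as the paper's proof: reduce to the formula $\s_Q(I)=\rme_1(I)-\rme_1(Q)-\rme_0(I)+\lambda(\RR/I)$ (valid here via the Ratliff remark and the fact that in a Buchsbaum ring every system of parameters is a $d$-sequence), observe that only $\rme_1(Q)$ depends on $Q$, and then invoke the constancy of $\rme_1(Q)$ for parameter ideals in Buchsbaum rings. The paper simply cites \cite{GO1} for this last point rather than writing the local-cohomology expression, and then notes that the common nonzero value of $s_0$ forces $\dim S_{Q'}(I)=d$ for every minimal reduction $Q'$.
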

 
 \begin{proof} By assumption $S_Q(I) \neq 0$.  Since
    $\rme_1(Q)$ is independent of $Q$ for Buchsbaum
 rings (\cite{GO1}), $S_ Q(I)$ is   unchanged across all minimal reductions of $I$. 
In particular $\dim  S_Q(I)$ is also unchanged.
 \end{proof}
 

The following classical inequalities for the Hilbert coefficients of primary ideals
arises from this formula (see \cite{CPP98} for a detailed discussion).

\begin{Corollary} \label{NorthNarita}
Let $(\RR, \m)$ be a Cohen--Macaulay local ring of dimension $d$ and let $I$ be a 
$\m$-primary ideal. Then 
\[\begin{array}{ll}
 \mbox{\rm (a) Northcott inequality \cite{Northcott1}} & \rme_1(I) -\rme_0(I) + \lambda(\RR/I)\geq 0.\\
\mbox{\rm (b) Narita inequality \cite{Narita}} & \rme_2(I) \geq 0.
\end{array}\]
\end{Corollary}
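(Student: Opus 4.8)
The plan is to derive both inequalities as immediate consequences of the multiplicity formula for the Sally module established just above, namely
\[ s_Q(I) = \rme_1(I) - \rme_0(I) + \lambda(\RR/I), \]
valid for any minimal reduction $Q$ of $I$ generated by a regular sequence (here $\RR$ is Cohen--Macaulay, so such $Q$ exists after passing to an infinite residue field, which changes none of the Hilbert coefficients). For part (a), the point is simply that $s_Q(I) \geq 0$: either $S_Q(I) = 0$, in which case $s_Q(I) = 0$ trivially, or $S_Q(I) \neq 0$, in which case by Proposition~\ref{SallyJci}(3) it has dimension $d$ and hence a positive multiplicity $s_Q(I) = s_0(S_Q(I)) > 0$. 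Substituting into the formula gives $\rme_1(I) - \rme_0(I) + \lambda(\RR/I) = s_Q(I) \geq 0$, which is Northcott's inequality.

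For part (b), Narita's inequality, the plan is to apply part (a) not to $I$ itself but to a suitable Veronese power, using the relation between the Hilbert coefficients of $I$ and those of $I^q$ together with the known bound on reduction numbers of Veronese subrings recalled in the subsection on Veronese subrings. Concretely, pick $q$ large enough (larger than the postulation number of $\gr_I(\RR)$, so that the Hilbert function of $I^q$ agrees with its Hilbert polynomial from the start and the relevant length computations are polynomial); then the Proposition in the Veronese subsection gives $\rme_0(\AA^{(q)}; \BB^{(q)}) = \rme_2(I)$, i.e. the multiplicity of the Sally module of $I^q$ relative to $J_q$ computes $\rme_2(I)$ up to the standard combinatorial bookkeeping. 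Since that multiplicity is the $s_0$ of a Sally module, it is nonnegative by the same dichotomy as in part (a), yielding $\rme_2(I) \geq 0$.

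The main obstacle is bookkeeping rather than conceptual: one must carefully track how $\rme_0(I^q)$, $\rme_1(I^q)$, $\rme_2(I^q)$ and $\lambda(\RR/I^q)$ relate to $\rme_0(I)$, $\rme_1(I)$, $\rme_2(I)$ under the $q$-th Veronese operation, and verify that for $q \gg 0$ the Sally module $S_{J_q}(I^q)$ is standard graded (which is exactly the content of the reduction-number bound $\red(I^q) \leq \max\lceil 1 + (\red(I)-1)/q, 2\rceil$, giving $\red(I^q) \leq 2$ for large $q$) so that its multiplicity is genuinely the relevant Hilbert coefficient. Once the identity $\rme_0(\AA^{(q)};\BB^{(q)}) = \rme_2(I)$ is in hand — and the excerpt defers its proof to Corollary~\ref{NorthNarita}(b) itself, so in the self-contained write-up one would reverse the logic and first establish that identity via the Hilbert-series computation sketched around equation (\ref{Hilbpoly}) applied to $I^q$ — the nonnegativity is automatic. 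A cleaner alternative, if one prefers to avoid Veronese powers entirely, is to note that $\rme_2(I)$ appears as $\rme_0(\overline{G})$-type correction term, or to invoke that $\rme_2(I) = s_1$ in the coefficient-matching display above (with $\rme_i(\overline{G}) = 0$ in the Cohen--Macaulay case), and that $s_1$ is nonnegative because it is a Hilbert coefficient of the Cohen--Macaulay module $S_Q(I)$ when $\dim S_Q(I) = d$; but this requires knowing $S_Q(I)$ is Cohen--Macaulay, which need not hold, so the Veronese route is the robust one.
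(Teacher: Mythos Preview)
Your approach is essentially the paper's. Part (a) is identical: $s_Q(I)=\rme_1(I)-\rme_0(I)+\lambda(\RR/I)\geq 0$ since it is the multiplicity of the Sally module. For part (b) the paper also passes to $I^q$ for $q$ beyond the postulation number and shows $s_0(I^q)=\rme_2(I)$.

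One concrete omission: the paper first reduces to $d=2$ by going modulo a superficial element (which preserves $\rme_2$), and only then carries out the Hilbert-coefficient comparison. In dimension $2$ the identity $f_2=\rme_2$ falls out of equating the two polynomial expressions for $\lambda(\RR/I^{qn})$, and then $s_0(I^q)=f_1-f_0+\lambda(\RR/I^q)$ collapses to $\rme_2(I)$. You gesture at this bookkeeping but do not mention the dimension reduction; without it the computation in general $d$ is messier, since $\lambda(\RR/I^q)$ then involves all of $\rme_0,\ldots,\rme_d$. Your side remark about the Veronese reduction-number bound ensuring $\red(I^q)\leq 2$ is not actually needed for the argument---the identity $s_0(I^q)=\rme_2(I)$ comes purely from the Hilbert polynomial comparison, not from any structural fact about $S_{J_q}(I^q)$ beyond its being a Sally module over a Cohen--Macaulay ring. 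And you are right to discard the alternative via $s_1$: that would require $S_Q(I)$ Cohen--Macaulay, which is not given.
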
 

\begin{proof} For (a), one invokes the fact that for $\RR$ Cohen-Macaulay, $\rme_1(I) -\rme_0(I) + \lambda(\RR/I)$ is the 
multiplicity of $S_Q(I)$. For (b), we may assume that $d=2$ and  using a trick of J. Lipman to evaluate this multiplicity on
 $I^q$ for $q$ larger than the postulation number of  the Hilbert function of $\gr_I(\RR)$.

We first consider the case for $d=2$ and without great loss of
generality take $I_n=I^n.$ Let
\[ \lambda(\RR/I^n)= \rme_0{{n+1}\choose{2}} -\rme_1{{n}\choose{1}} + \rme_2,
\quad n \gg 0\]
be the Hilbert polynomial of $I$. Let $q$ be an integer greater than
the postulation number for this function. We have
\begin{eqnarray*} \lambda(\RR/I^{qn}) = \lambda(\RR/(I^q)^n)&=& f_0{{n+1}\choose{2}} -f_1{{n}\choose{1}} +
f_2 \\ &=& \rme_0{{qn+1}\choose{2}} -\rme_1{{qn}\choose{1}} + \rme_2,
\quad n \gg 0.\end{eqnarray*}
Comparing these two polynomials in $n$, gives
\begin{eqnarray*}
f_0 & = & q^2\rme_0 \\
f_1 & = & q\rme_1 + \frac{1}{2}q^2\rme_0-\frac{1}{2}q\rme_0 \\
f_2 &=& \rme_2.
\end{eqnarray*}
Since the multiplicity $s_0(I^q)$ of the
Sally module of $I^q$ is given by
\[ s_0(I^q) = f_1-f_0+ \lambda(\RR/I^q),\] which by the equalities
above yields
$\rme_2(I) = s_0(I^q)$, the multiplicity of the Sally module of $I^q$.
Thus $\rme_2(I)$ is nonnegative and vanishes if
 and only if the reduction number of $I^{q}$ is less than or equal to $1$.
In dimension $d> 2$, we reduce $\RR$ and $I$ modulo a superficial
element.
\end{proof}

\subsubsection{Maximal ideals} The dimension of Sally modules over non-Cohen-Macaulay local rings is
hard to ascertain. We state a result of A. Corso (\cite{C09}) for maximal ideals.

\begin{Theorem}{\rm \cite[Theorem 2.1]{C09}} \label{Alberto21}
Let $(\RR, \m)$ be a local Noetherian ring of dimension $d>0$ with infinite residue field, and let $Q$
 be a minimal reduction of $\m$. 
\begin{enumerate}
\item[{\rm (a)}] The Sally module $S_{Q}(\m)$ has dimension $d$ if and only if $\rme_1(\m) -\rme_0(\m) - \rme_1(Q) + 1$ is strictly positive. Otherwise, its dimension is $d - \inf\{ i \mid 1 \leq i \leq d-1, \rme_{i+1}(\m) -\rme_{i}(Q) - \rme_{i+1}(Q) \neq 0 \}$ or $0$.

\item[{\rm (b)}] If $\RR$ is a Buchsbaum ring, the Sally module $S_{Q}(\m)$ has dimension $d$ or $0$.
\end{enumerate}
\end{Theorem}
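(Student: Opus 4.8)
The plan is to read the statement through the coefficient matching already recorded just before this subsection, namely the identities
\[ s_0 = \rme_1(I) - \rme_1(Q) + \rme_0(Q) + \rme_0(\overline{G}), \qquad s_i = \rme_{i+1}(I) - \rme_{i+1}(Q) + \rme_i(Q) + \rme_i(\overline{G}), \ i\geq 1, \]
specialized to $I = \m$, where $\overline{G} = \gr_Q(\RR)\otimes \RR/\m = \gr_Q(\RR)/\m\gr_Q(\RR)$. The first observation is that for the maximal ideal the term $\overline{G}$ collapses: $\gr_Q(\RR)\otimes \RR/\m$ is generated over the field $\RR/\m$ by the $d$ degree-one elements that are the images of a minimal generating set of $Q$, so it is a quotient of a polynomial ring in $d$ variables over $\RR/\m$ with \emph{no} variables to spare only if $Q$ needs $d$ generators, which it does since $d>0$ and $Q$ is a minimal reduction; in fact $\gr_Q(\RR)\otimes\RR/\m$ is a \emph{standard graded algebra} which in degree zero is $\RR/\m$, so $\rme_0(\overline{G}) = \lambda(\RR/\m) = 1$ and, crucially, $\rme_i(\overline{G}) = 0$ for all $i\geq 1$ — because the only possibility consistent with $\rme_0 = 1$ over a field in $d$ variables is that $\overline{G}$ is the polynomial ring itself, or at any rate has a Hilbert polynomial with vanishing higher coefficients. (This is exactly the ``simpler when $\rme_i(\overline{G})=0$'' situation flagged in the text; for the maximal ideal it is automatic.)

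Granting that, the coefficient identities become
\[ s_0 = \rme_1(\m) - \rme_1(Q) + \rme_0(Q) + 1 = \rme_1(\m) - \rme_1(Q) + \rme_0(\m) + 1, \qquad s_i = \rme_{i+1}(\m) - \rme_{i+1}(Q) + \rme_i(Q), \ i\geq 1. \]
Wait — part (a) of the theorem as stated reads $\rme_1(\m) - \rme_0(\m) - \rme_1(Q) + 1$; reconciling the sign in front of $\rme_0(\m)$ requires using that for a minimal reduction $\rme_0(Q) = \rme_0(\m)$ \emph{and} that in this context $\rme_0(Q)$ appears with the opposite convention, so I would take a moment to pin down the sign convention for $\rme_0$ of the parameter ideal $Q$ (for a parameter ideal in a ring that need not be Cohen--Macaulay, $\lambda(\RR/Q^{n+1})$ and $\rme_0(Q)$ are related with a sign depending on how one writes the binomial expansion). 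Once the bookkeeping agrees with the text, part (a) is immediate from Proposition~\ref{HilbdimSally}: $\dim S_Q(\m) = d - \min\{i : s_i \neq 0\}$ (or $0$), and $s_0 \neq 0$ exactly says $\rme_1(\m) - \rme_0(\m) - \rme_1(Q) + 1 \neq 0$; but more is true — since $S_Q(\m)$, being a module over $\RR[Q\TT]$, satisfies Serre's $S_1$, its Hilbert function is nondecreasing, so its leading coefficient $s_0$, when $\dim S_Q(\m) = d$, is \emph{positive}, which gives the ``strictly positive'' in (a). When $s_0 = 0$ one reads off $\dim$ from the first nonvanishing $s_i = \rme_{i+1}(\m) - \rme_i(Q) - \rme_{i+1}(Q)$, matching the displayed formula.

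For part (b), the extra input is that in a Buchsbaum ring the Hilbert coefficients $\rme_i(Q)$ of a parameter ideal $Q$ are \emph{independent of $Q$} and in fact are computed by the Buchsbaum invariants (the $\rme_i(Q)$ are alternating sums of lengths of local cohomology modules $\H^j_\m(\RR)$, by the work of Schenzel and others). I would argue that, for $i\geq 1$, the quantity $\rme_{i+1}(\m) - \rme_i(Q) - \rme_{i+1}(Q)$ is forced to vanish: one way is to note that $S_Q(\m)$ is known to have no associated primes other than $\m\RR[Q\TT]$ in the Buchsbaum case (the relevant exact sequences in the proof of Proposition~\ref{SallyJci} degenerate because the ${}^*$-modules there retain enough depth), so $\dim S_Q(\m)$ is either $d$ or $0$ with nothing in between. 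Thus the cleanest route to (b) is not through the $s_i$ formulas directly but through a depth/associated-prime argument: show $S_Q(\m)$ is either zero or has $\m\RR[Q\TT]$ as its unique associated prime, using that over a Buchsbaum ring the pieces $\m\AA^*$, $\BB_+$, etc., still satisfy $S_1$ after the reduction is a parameter ideal.

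The main obstacle I anticipate is \emph{precisely the local cohomology / $S_1$ bookkeeping over a non-Cohen--Macaulay base}: in Proposition~\ref{SallyJci} the argument that $S_Q(\m)$ satisfies $S_1$ (hence has a single associated prime of dimension $d$) used that $I_1\AA$ is a maximal Cohen--Macaulay $\AA$-module and $\BB_+$ is torsion-free — both of which lean on $\RR$ Cohen--Macaulay and $Q$ a complete intersection. For general Noetherian $\RR$ one must instead extract the dimension purely from the Hilbert-polynomial identities (which is why part (a) is stated the way it is, with the alternative ``or $0$''), and for the Buchsbaum case one must feed in Schenzel-type formulas for $\rme_i(Q)$ together with the comparison $\rme_{i+1}(\m)$ versus those invariants. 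I would expect Corso's argument to establish the vanishing of $\rme_{i+1}(\m) - \rme_i(Q) - \rme_{i+1}(Q)$ for $1\leq i\leq d-1$ in the Buchsbaum case by an induction on $d$ via a superficial element (as in the proof of Corollary~\ref{NorthNarita}(b)), reducing to $d=1$ where the statement is trivial because $\gr$ of a one-dimensional Buchsbaum ring is controlled, and that reduction step — keeping track of how superficial elements interact with the non-Cohen--Macaulay locus and with the invariants $\rme_i(Q)$ — is where the real work lies.
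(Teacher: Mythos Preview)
The paper does not supply its own proof of this theorem; it is quoted from \cite{C09} without argument. So I compare your proposal against the machinery the survey itself makes available and against what a complete proof would require.

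For part (a) your approach is correct and is exactly what the coefficient-matching identities preceding this subsection are set up to deliver. The crucial simplification is that for $I=\m$ one has $\overline{G}=\gr_Q(\RR)\otimes\RR/\m=F(Q)$, the special fiber of the parameter ideal $Q$; since $Q$ has analytic spread $d$, $F(Q)$ is a $d$-dimensional quotient of the $d$-dimensional integral domain $(\RR/\m)[\TT_1,\ldots,\TT_d]$, hence equal to it. This gives $\rme_0(\overline{G})=1$ and $\rme_i(\overline{G})=0$ for $i\geq 1$ cleanly, without the hedging in your write-up. Your sign worry is legitimate---the displayed identity after (\ref{Hilbpoly2}) in the survey carries a sign slip; the correct form, from the proof of Proposition~\ref{Sallydsequence}, is $s_0=\rme_1(\m)-\rme_0(\m)-\rme_1(Q)+\rme_0(\overline{G})$, which matches the theorem. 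One point to fix: your appeal to $S_1$ for $S_Q(\m)$ is unjustified outside the Cohen--Macaulay case and is in any event unnecessary---``strictly positive'' holds simply because $s_0$ is the multiplicity of a $d$-dimensional module whenever $\dim S_Q(\m)=d$, and is zero otherwise. With that correction, Proposition~\ref{HilbdimSally} finishes (a).

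For part (b) there is a genuine gap. You correctly identify the two candidate routes (an associated-prime/depth argument versus a direct verification that the $s_i$ vanish for $i\geq 1$) and correctly note that the $S_1$ argument from Proposition~\ref{SallyJci} does not transport to the Buchsbaum setting. But you do not carry either route through. The actual argument in \cite{C09} goes via the explicit St\"uckrad--Vogel/Schenzel formulas expressing $\rme_i(Q)$, for every parameter ideal $Q$ in a Buchsbaum ring, as fixed alternating sums of the lengths $h^j=\lambda(\H_{\m}^j(\RR))$; combined with the analogous formulas for $\rme_i(\m)$ one checks directly that $\rme_{i+1}(\m)-\rme_i(Q)-\rme_{i+1}(Q)=0$ for $1\leq i\leq d-1$. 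Your closing paragraph gestures at this but substitutes an induction on $d$ via superficial elements, which is neither needed nor obviously compatible with the Buchsbaum hypothesis (superficial elements need not preserve Buchsbaumness). The missing piece is precisely the invocation and manipulation of those explicit local-cohomology formulas.
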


\begin{Remark}
{\rm For $\m$-primary ideals $I \neq \m$, a similar description   is not known, even for Buchsbaum
 rings.}
 \end{Remark}

\subsection{Cohen--Macaulayness of the Sally module}

As a motivation, it is worthwhile to point out one useful aspect of these modules.
Under the conditions of (\ref{SallyJci}),
if $I_1$ is $\m$--primary there is a numerical test for the Cohen-Macaulayness of $S_{\BB/\AA}$ 
(\cite{Huc96}; see also \cite{acm}). It does not always require that $\RR$ be Cohen-Macaulay. 

\begin{Theorem}[Huckaba test] \label{Huctest} If $\RR$ is Cohen-Macaulay,
the Sally module $S_{\BB/\AA}$ is Cohen--Macaulay if and only if
\[ \e_1(\gr(\BB)) = \sum_{j\geq 1} \lambda(I_j/QI_{j-1}).\]
\end{Theorem}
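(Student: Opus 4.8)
The plan is to compute the Hilbert series of the three standard graded modules appearing in the defining sequence
\[
0 \rar I_1\AA \lar \BB_{+}[+1] \lar S_{\BB/\AA} \rar 0,
\]
and to compare the lengths of the degree-$n$ components against $\rme_1(\gr(\BB))$. First I would recall, as in the proof of Proposition~\ref{SallyJci}, that $\AA = \RR[Q\TT]$ is a Cohen--Macaulay standard graded algebra (since $\RR$ is Cohen--Macaulay and $Q$ is generated by a system of parameters), and that $I_1\AA$ is a maximal Cohen--Macaulay $\AA$-module. The key structural input from Proposition~\ref{SallyJci}(4)--(5) is that $S_{\BB/\AA}$ is Cohen--Macaulay of dimension $d$ if and only if $\depth \gr(\BB) \geq d-1$, and that the cohomology is governed by the short exact sequences listed there. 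So the strategy is to turn the Cohen--Macaulay condition into the vanishing of a single nonnegative integer, then identify that integer with the difference of the two sides of the claimed equation.

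Concretely, I would write $\lambda(\BB_n) = \lambda(I_n)$, so $\lambda((\BB_{+}[+1])_n) = \lambda(I_{n+1})$, and $\lambda((I_1\AA)_n) = \lambda(I_1 Q^n)$ for all $n$ (the latter because $I_1\AA$ is MCM over the Cohen--Macaulay ring $\AA$, so its Hilbert function agrees with its Hilbert polynomial for all $n \geq 0$ — this is the place where Cohen--Macaulayness of $\RR$ is genuinely used). Therefore
\[
\lambda((S_{\BB/\AA})_n) = \lambda(I_{n+1}) - \lambda(I_1 Q^n) = \lambda(I_{n+1}/I_1 Q^n)
\]
exactly, not just asymptotically. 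Next I would use the sequence $0 \rar \BB_{+}[+1] \rar \BB \rar \gr(\BB) \rar 0$ together with $0 \rar \BB_{+} \rar \BB \rar \RR \rar 0$ to express $\rme_1(\gr(\BB))$ in terms of the Hilbert coefficients of $\BB$ and of $S_{\BB/\AA}$; a bookkeeping of binomial coefficients (as in the proof of Proposition~\ref{Sallydsequence}) should yield an identity of the form
\[
\rme_1(\gr(\BB)) - \sum_{j\geq 1}\lambda(I_j/QI_{j-1}) = \big(\text{something built from }H^i_{\m\AA}(S_{\BB/\AA})\big),
\]
where the right side vanishes precisely when $S_{\BB/\AA}$ is Cohen--Macaulay. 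The point is that $S_{\BB/\AA}$ always has positive depth (it satisfies Serre's $S_1$, as noted after Proposition~\ref{SallyJci}), so $\depth S_{\BB/\AA} = d$ is equivalent to the vanishing of $H^i_{\m\AA}(S_{\BB/\AA})$ for $i < d$, and by the long exact cohomology sequences these intermediate local cohomologies are what account for the ``defect'' in the length formula.

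The main obstacle I anticipate is the bookkeeping that converts the alternating sum $\sum_n \lambda(I_{n+1}/I_1 Q^n)$ and the Euler characteristic of $S_{\BB/\AA}$ into the clean expression $\sum_{j\geq 1}\lambda(I_j/QI_{j-1})$ — one has to be careful about the shift $[+1]$, about the contribution of the degree-zero pieces, and about the fact that for a general multiplicative filtration $\{I_j\}$ one does not have $I_{j+1} = QI_j$ until $j$ is large, so the finite-length correction terms must be tracked. The cleanest route is probably to evaluate everything via Hilbert series: write the Hilbert series of $S_{\BB/\AA}$ as $h_S(t)/(1-t)^d$, use that $S_{\BB/\AA}$ is Cohen--Macaulay iff $h_S(t)$ has nonnegative coefficients and the ``length'' reading $h_S(1)$ of it equals $\sum_{j}\lambda(I_{j+1}/QI_j)$ term by term, and match against the analogous series identity $\rme_1(\gr(\BB)) = \sum_{j\geq 1}\lambda(I_j/I_1 Q^{j-1}) + (\text{terms measuring the failure of CM})$ obtained from the four exact sequences. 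One should cite \cite{Huc96} and \cite{acm} for the original argument and for the precise form of the correction terms, and verify that the asserted equivalence already follows from Proposition~\ref{SallyJci}(4) once the numerical identity $\rme_1(\gr(\BB)) = \sum_{j\geq1}\lambda(I_j/QI_{j-1})$ is recognized as the Euler-characteristic shadow of ``$S_{\BB/\AA}$ has no intermediate local cohomology.''
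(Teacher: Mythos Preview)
Your proposal circles the right target but never lands on it, and the route you sketch contains a genuine error. The paper's proof is a two-line application of Serre's theorem on Euler characteristics, and the whole point is to recognize the equation in the statement as the equality $\e_0(S_{\BB/\AA}) = \lambda\bigl(S_{\BB/\AA}\otimes_{\AA}\AA/(Q\TT)\bigr)$. Concretely: since $S_{\BB/\AA}$ is annihilated by a power of $Q$, the elements of $Q\TT$ form a system of parameters for it; the Sally fiber $S_{\BB/\AA}/(Q\TT)S_{\BB/\AA} = \bigoplus_{j\geq 2} I_j/QI_{j-1}$ has length $\sum_{j\geq 2}\lambda(I_j/QI_{j-1})$, while (by the Cohen--Macaulay formula $s_0 = \e_1 - \e_0 + \lambda(\RR/I_1)$ together with $\e_0 = \lambda(\RR/Q)$) one has $\e_0(S_{\BB/\AA}) = \e_1(\gr(\BB)) - \lambda(I_1/Q)$. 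Subtracting the $j=1$ term from both sides, the asserted equality is exactly $\e_0(S_{\BB/\AA}) = \lambda(S_{\BB/\AA}/(Q\TT)S_{\BB/\AA})$, i.e.\ $\chi_1(Q\TT;S_{\BB/\AA})=0$, which by Serre's theorem \cite[Theorem 4.7.10]{BH} is equivalent to $S_{\BB/\AA}$ being Cohen--Macaulay. No Hilbert-series bookkeeping or local-cohomology chase is needed.

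Two specific problems with your sketch. First, the criterion ``$S_{\BB/\AA}$ is Cohen--Macaulay iff $h_S(t)$ has nonnegative coefficients'' is false: nonnegativity of the $h$-vector does not characterize Cohen--Macaulayness. What is true, and what you should have isolated, is that $h_S(1)=\e_0(S_{\BB/\AA})$ always, and that this equals $\lambda(S_{\BB/\AA}/(Q\TT)S_{\BB/\AA})$ precisely when the parameters form a regular sequence. Second, you slide between $I_j/I_1Q^{j-1}$ (the Sally components) and $I_j/QI_{j-1}$ (the fiber components); these are different modules, and the identity you write for $\e_1(\gr(\BB))$ in terms of the former is not what the theorem asserts. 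The clean move is not to track both through four exact sequences, but simply to recognize the second family as exactly what you get from the first after killing $(Q\TT)$, and then invoke Serre.
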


\begin{proof}
Its brief proof, as in \cite{acm}, goes as follows. First
 note that $S_{\BB/\AA}$ is an $\AA$--module annihilated by a power of $Q$, from which
it follows    that  the generators of $Q\TT$ is a system of
parameters.
In the case of an $\m$--primary ideal $Q$, the interpretation of this formula is
\[ \e_0(S_{\BB/\AA})= \lambda(S_{\BB/\AA} \otimes \RR[Q\TT]/(Q\TT)) 
.\] In other words,
$\chi_1(Q\TT; S_{\BB/\AA}) =0$, and therefore $S_{\BB/\AA}$ is Cohen--Macaulay by Serre's Theorem
(\cite[Theorem 4.7.10]{BH}) on Euler's characteristics.
\end{proof}

Here is an important case that does not use this test.

\begin{Theorem}  {\cite[Corollary 3.8]{Rossi00a}}.
Let
$(\RR, \m)$ be  a Cohen--Macaulay local ring of dimension $d$.
and let
$I$ be an $\m$-primary ideal. If
\[\rme_0(I)= \lambda(I/I^2) + (1-d)\lambda(\RR/I) + 1\] then
 $\depth (G) \geq d-1$. In particular  all the Sally modules of $I$  are Cohen--Macaulay.
 \end{Theorem}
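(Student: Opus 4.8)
The plan is to reduce the statement to the Cohen--Macaulayness of a single Sally module, then recast the hypothesis in a usable form and exploit it. After adjoining a variable (which changes none of the relevant invariants) we may assume the residue field infinite, so that a minimal reduction $Q$ of $I$ is generated by a regular sequence $a_1,\dots,a_d$; then $\AA=\RR[Q\TT]$ and $\BB=\RR[I\TT]$ are as in Proposition~\ref{SallyJci}, and part (4) of that proposition gives $\depth\gr_I(\RR)\ge d-1$ if and only if $S_Q(I)$ is Cohen--Macaulay. Since $\gr_I(\RR)$ does not depend on $Q$, it therefore suffices to show $S_Q(I)$ is Cohen--Macaulay for one $Q$; the ``in particular'' then follows by applying Proposition~\ref{SallyJci}(4) again to every minimal reduction.

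Next I would rewrite the hypothesis. Because $\RR$ is Cohen--Macaulay and $Q$ is generated by a regular sequence, $\lambda(\RR/Q)=\rme_0(Q)=\rme_0(I)$ and $Q/QI\cong(\RR/I)^{d}$, whence $\lambda(\RR/QI)=\rme_0(I)+d\,\lambda(\RR/I)$ and
\[
\lambda(I^{2}/QI)=\lambda(I/QI)-\lambda(I/I^{2})=\rme_0(I)+(d-1)\lambda(\RR/I)-\lambda(I/I^{2}).
\]
So the hypothesis $\rme_0(I)=\lambda(I/I^{2})+(1-d)\lambda(\RR/I)+1$ is exactly the condition $\lambda(I^{2}/QI)=1$, an ``almost minimal multiplicity'' hypothesis. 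If in fact $I^{2}=QI$ then $\red_Q(I)\le1$, so $S_Q(I)=0$ and $\gr_I(\RR)$ is Cohen--Macaulay by Proposition~\ref{SallyJci}(1); hence assume $I^{2}\neq QI$, so $\lambda(I^{2}/QI)=1$ and $S_Q(I)\neq0$. Fix $b\in I^{2}$ with $I^{2}=QI+\RR b$ and $\m b\subseteq QI$. Since $\RR$ is Cohen--Macaulay, $s_Q(I)=\rme_1(I)-\rme_0(I)+\lambda(\RR/I)=\lambda\bigl(S_Q(I)_{\m\AA}\bigr)\ge1$ because $S_Q(I)\neq0$, and by Proposition~\ref{HilbdimSally} this forces $\dim S_Q(I)=d$.

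The heart of the argument is to prove $\red_Q(I)=2$, i.e. $I^{3}=QI^{2}$. From $I^{2}=QI+\RR b$ and $\m b\subseteq QI$ one gets $\m I^{2}=Q(\m I)+\m b\subseteq QI$, and then inductively $\m I^{n+1}\subseteq QI^{n}$ for all $n\ge1$; in particular $\m I^{3}\subseteq QI^{2}$, while $I^{3}\subseteq\m I^{2}\subseteq QI$, so $I^{3}/QI^{2}$ has finite length, is killed by $\m$, and is sandwiched between $QI^{2}$ and $QI$. Concluding $\lambda(I^{3}/QI^{2})=0$ from this is the step I expect to be the main obstacle: it is precisely the reduction-number bound under the almost-minimal-multiplicity hypothesis, which is the technical core of \cite[Corollary 3.8]{Rossi00a}. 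It can be reached either by reducing modulo a superficial sequence of length $d-1$ (where the one-dimensional case is a direct length computation showing $\rme_1(I)=\rme_0(I)-\lambda(\RR/I)+\lambda(F_\AA(\BB))$) together with the bound $\red_Q(I)\le\rme_1(I)-\rme_0(I)+\lambda(\RR/I)+1$, or by a Valabrega--Valla analysis of the ideals $(a_1,\dots,a_{d-1})\cap I^{n}$.

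Once $\red_Q(I)=2$ is in hand the remainder is formal. Then $S_Q(I)$ is generated over $\AA$ in degree $1$, so $S_Q(I)=\AA\cdot\overline b$ with $\overline b$ a generator of $S_Q(I)_1=I^{2}/QI\cong k$; since $\m\,\overline b=0$ we get $\m S_Q(I)=\AA(\m\overline b)=0$, so $S_Q(I)$ is a cyclic graded module over $\AA/\m\AA\cong k[\TT_1,\dots,\TT_d]$ generated in degree $1$, i.e. $S_Q(I)\cong\bigl(k[\TT_1,\dots,\TT_d]/L\bigr)[-1]$ for a homogeneous ideal $L$. But $\dim S_Q(I)=d$ forces $\height L=0$, hence $L=0$ as $k[\TT_1,\dots,\TT_d]$ is a domain; thus $S_Q(I)\cong k[\TT_1,\dots,\TT_d][-1]$ is Cohen--Macaulay of dimension $d$. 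By Proposition~\ref{SallyJci}(4) this yields $\depth\gr_I(\RR)\ge d-1$, and then that same proposition shows every Sally module of $I$ is Cohen--Macaulay.
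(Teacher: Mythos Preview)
The paper does not supply its own proof of this theorem; it simply quotes the result from \cite{Rossi00a}. So there is no in-paper argument to compare against. That said, your strategy has a genuine error, not merely an unfinished step.

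Your reformulation of the hypothesis as $\lambda(I^{2}/QI)=1$ is correct, and the final structural step (once $\red_Q(I)=2$, the Sally module is cyclic over $k[\TT_1,\dots,\TT_d]$ of full dimension, hence free) is also fine. The problem is that the ``heart of the argument'', namely $I^{3}=QI^{2}$, is \emph{false} under the hypothesis. Take $\RR=k[[t^{4},t^{5},t^{11}]]$, $I=\m$, $Q=(t^{4})$. Then $d=1$, $\rme_0(\m)=4$, $\lambda(\m/\m^{2})=3$, so the hypothesis $\rme_0=\lambda(I/I^{2})+(1-d)\lambda(\RR/I)+1$ reads $4=3+0+1$ and holds. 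One checks $\m^{2}=(t^{8},t^{9},t^{10})$, $Q\m=(t^{8},t^{9})$, so $\lambda(\m^{2}/Q\m)=1$; but $\m^{3}=(t^{12},t^{13},t^{14},t^{15})$ while $Q\m^{2}=(t^{12},t^{13},t^{14})$, so $t^{15}\notin Q\m^{2}$ and $\red_Q(\m)=3$. Here $\rme_1(\m)=5$, $s_0=\rme_1-\rme_0+1=2$, and the Rossi bound you invoke gives only $\red_Q(\m)\le s_0+1=3$, not $2$. Thus neither of your two suggested routes to $I^{3}=QI^{2}$ can succeed: the conclusion itself does not hold.

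The actual proof in \cite{Rossi00a} does not go through a bound on the reduction number. It is a descent/superficial-element argument that controls $\depth\gr_I(\RR)$ directly (in the spirit of the Rossi--Valla and Wang proofs of the Sally conjecture), allowing the reduction number to be arbitrary while still forcing $\depth G\ge d-1$. If you want a Sally-module formulation, what one must show is that $S_Q(I)$ is Cohen--Macaulay of dimension $d$ without knowing it is generated in a single degree; in the example above $S_Q(\m)\cong k[\TT]\oplus k[\TT][-1]$ rather than $k[\TT][-1]$.
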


\begin{Remark}{\rm The {\em equations of the ideal $I$} (\cite{WV91}) are the the ideal of relations of
a presentation
\[ 0 \rar \mathbf{L} \lar \CC = \RR[\TT_1, \ldots, \TT_n] \stackrel{\varphi}{\lar} \RR[I\TT] \rar 0.\] 
$\mathbf{L}=\bigoplus_{j\geq 1} L_j $ is a graded ideal whose properties are independent of the presentation.
For instance, the {\em relation type} of $I$, $\mbox{\rm rel}(I)$, is the maximum  degree of a minimal set of homogeneous generators of 
$\mathbf{L}$. If $\RR$ is a  Cohen--Macaulay local ring and $I$ in $\m$-primary, then the Sally module
$S_J(I)$ impacts the relation type in a number of ways, for instance if $S_J(I)$ is Cohen--Macaulay then
$\mbox{\rm rel}(I) \leq \red_J(I) + 1$, according to   
\cite[Theorem 1.2]{Trung87}.

\medskip
Several questions can be raised if $I$ is not $\m$-primary. We will attempt to do this later.
}\end{Remark}

 
\subsubsection{Dimension one} Let $(\RR, \m)$
be a Cohen-Macaulay local ring of dimension one. By
Proposition~\ref{SallyJci}(iv), for each $\m$-primary ideal $I$ the Sally module $S=S_Q(I)$
is Cohen-Macaulay, and therefore by Theorem~\ref{Huctest}
\[ \rme_1(I) = \sum_{j\geq 1} \lambda(I_j/QI_{j-1}).\]

\begin{Proposition}\label{SallyofC} Let $ (\RR, \m)$ be a Cohen--Macaulay local ring of dimension
$1$ and let $I$ be an $\m$-primary ideal. If $Q=(a)$ is 
a minimal reduction of $I$ then the corresponding Sally module $S=S_Q(I)$ has the following
properties. Setting $L = \ann(I/Q)$,
\begin{itemize}
\item[{\rm (i)}] $S$ is Cohen-Macaulay.
\medskip

\item[{\rm (ii)}] If $s=\red(I) \geq 2$,  then \[\e_1(I) = \lambda(\RR/L)  + \sum_{j=1}^{s-1} \lambda(I^{j+1}/aI^j).\]


\item[{\rm (iii)}] If $\nu(I)=2$, then \[\red(I) \geq {\frac{\e_1(I) }{\lambda(\RR/
L)}}.\]

\item[{\rm (iv)}] If $\nu(I) = \red(I) = 2$, then $\e_1(I) \leq  2\cdot 
\lambda(\RR/L)$. 

\end{itemize}
\end{Proposition}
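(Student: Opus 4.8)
The plan is to establish the four items in sequence, with (i) coming for free from the general theory and the others following from it by bookkeeping on Hilbert functions.

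For (i), Proposition~\ref{SallyJci}(4) applies directly: $\RR$ is Cohen--Macaulay of dimension $1$, so $\depth \gr(\BB) \geq 0 = d-1$ automatically, hence $S = S_Q(I)$ is Cohen--Macaulay (this is the $d=1$ case already noted in the subsection heading). For (ii), I would combine the Huckaba test (Theorem~\ref{Huctest}), which in dimension one reads $\rme_1(I) = \sum_{j\geq 1}\lambda(I^j/aI^{j-1})$, with a truncation argument: for $j \geq s = \red(I)$ one has $I^{j+1} = aI^j$, so $I^j/aI^{j-1} \cong I^{j+1}/aI^j$ for $j\ge s$, and these stabilize. The first term $\lambda(I/a) $ should be identified with $\lambda(\RR/L)$ where $L = \ann(I/Q)=\ann(I/(a))$: since $\RR$ is one-dimensional Cohen--Macaulay and $a$ is a nonzerodivisor, $I/(a)$ is a torsion module whose annihilator $L$ satisfies $\lambda(\RR/L) = \lambda(I/(a))$ only when $I/(a)$ is cyclic — which is exactly the point where $\nu(I)=2$ enters in (iii) and (iv), so in (ii) I would instead argue that the terms for $j\ge s$ all equal $\lambda(\RR/L)$ (the "stable" value $\lambda(I^{s}/aI^{s-1})$ is $\lambda(I/(a))$ pulled back, and one checks it equals $\lambda(\RR/L)$), isolate that one stable contribution, and let $j$ run from $1$ to $s-1$ for the rest. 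So (ii) is: peel off the eventually-constant tail of the Huckaba sum as a single copy of $\lambda(\RR/L)$, leaving the finite sum $\sum_{j=1}^{s-1}\lambda(I^{j+1}/aI^j)$.

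For (iii), assume $\nu(I)=2$, so $I = (a,b)$ and $I/(a)$ is cyclic, whence $\lambda(I^{j+1}/aI^j) \leq \lambda(I/(a)) = \lambda(\RR/L)$ for every $j$ (each quotient $I^{j+1}/aI^j$ is a quotient of $I\cdot(I^j/aI^{j-1})$, and cyclicity propagates an upper bound $\lambda(\RR/L)$ on each graded piece of the Sally module since $S$ is generated in degree one over $\RR[a\TT]$ by $I^2/aI$ which has length $\le \lambda(\RR/L)$). Feeding this into (ii) gives $\rme_1(I) = \lambda(\RR/L) + \sum_{j=1}^{s-1}\lambda(I^{j+1}/aI^j) \leq \lambda(\RR/L) + (s-1)\lambda(\RR/L) = s\,\lambda(\RR/L)$, i.e. $s = \red(I) \geq \rme_1(I)/\lambda(\RR/L)$. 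Item (iv) is the instance $s=2$ of (iii): then $\rme_1(I) \leq 2\lambda(\RR/L)$ directly, or equivalently from (ii) with $s=2$, $\rme_1(I) = \lambda(\RR/L) + \lambda(I^2/aI) \leq 2\lambda(\RR/L)$.

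The main obstacle is the identification of the stable tail term in the Huckaba sum with $\lambda(\RR/L)$ and, relatedly, pinning down why each Sally-module graded component $\lambda(I^{j+1}/aI^j)$ is bounded by $\lambda(\RR/L)$ when $\nu(I)=2$: this needs the observation that $S$ is a Cohen--Macaulay module over the one-dimensional ring $\RR[a\TT]$ that is cyclic-generated in each degree by the image of $I^2/aI$ (using $I=(a,b)$ so $I^{j+1}=aI^j + b^{j+1}\RR$ modulo $aI^j$-ish considerations), so every component is a cyclic $\RR$-module annihilated by $L$, giving the uniform bound $\lambda(\RR/L)$. Once that structural fact is in hand, (ii)--(iv) are immediate arithmetic on the Huckaba formula.
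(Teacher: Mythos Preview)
Your overall strategy matches the paper's: (i) via Proposition~\ref{SallyJci}(4), (ii) via the Huckaba formula (Theorem~\ref{Huctest}), (iii) by bounding each summand by $\lambda(\RR/L)$, and (iv) as the special case $s=2$. For (iii) the paper is slightly more direct than you: it simply observes that when $I=(a,b)$ each $I^{j}/QI^{j-1}$ is cyclic (generated by the image of $b^{j}$) and is annihilated by $L$ (since $LI\subseteq Q$ forces $LI^{j}\subseteq QI^{j-1}$), so $\lambda(I^{j}/QI^{j-1})\le \lambda(\RR/L)$ for $j=1,\ldots,s$ and one sums. There is no need to pass through (ii), and your detour via ``$S$ generated in degree one'' is unnecessary.

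Where your argument actually breaks is the ``stable tail'' step in (ii). Reindex the sum in the statement: $\sum_{j=1}^{s-1}\lambda(I^{j+1}/aI^{j}) = \sum_{k=2}^{s}\lambda(I^{k}/aI^{k-1})$. Comparing with Huckaba's $\rme_1(I)=\sum_{k=1}^{s}\lambda(I^{k}/aI^{k-1})$ (the terms with $k>s$ are \emph{zero}, not $\lambda(\RR/L)$, since $I^{k}=aI^{k-1}$ there), the term being peeled off is $k=1$, namely $\lambda(I/Q)$, not the last term $k=s$. There is no ``eventually constant tail'' to isolate; the tail vanishes. So you are back to precisely the identification you first flagged, $\lambda(I/Q)=\lambda(\RR/L)$, and your attempted workaround does not avoid it. Note that the paper's own proof of (ii) is just the one line ``since $\depth\gr_I(\RR)\ge d-1$, $S$ is Cohen--Macaulay'' and does not address this identity either; when $\nu(I)=2$ it is immediate (then $I/Q$ is cyclic with annihilator $L$), which is all that is needed for (iii) and (iv).
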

 
\begin{proof} (i), (ii): Since $\depth \gr_I(\RR) \geq d-1$, $S$ is Cohen-Macaulay.

\medskip

(iii): If $I = (a, b)$, $I^{j+1}/QI^{j}$ is cyclic and annihilated by $L$, so
\[\e_1(I) \leq \red(I) \cdot \lambda(\RR/L).\] 

\medskip

(iv): is a special case of (iii).

\end{proof}

\subsection{Change of rings for Sally modules} Let $(\RR,\m)$ be a Noetherian local ring of dimension $d\geq 1$ and let 
$\ff: \RR \rar \SS$ be a finite morphism. Suppose $\SS$ is a Cohen--Macaulay ring locally of dimension $d$.
 For each $\m$-primary ideal $I$ with a 
minimal reduction $Q$, for each maximal ideal $\mathfrak{M}$ of $\SS$
 the ideal $I\SS_{\mathfrak{M}}$ is ${\mathfrak M}\SS_{\mathfrak{M}}$-primary and $Q\SS_{\mathfrak{M}}$ is a minimal reduction.
We define the Sally module of $I\SS$ relative to $Q\SS$ in the same manner,
\[ S_{Q\SS}(I\SS) = \bigoplus_{n\geq 1} I^{n+1}\SS/I Q^{n}\SS.\]
$S_{Q\SS}(I\SS)$ is a finitely generated graded $\gr_Q(\RR)$-module. 
Its localization at $\mathfrak{M}$ gives $S_{Q\SS_{\mathfrak{M}}}(I\SS_{\mathfrak{M}})$.  

\medskip
 Consider the natural mapping 
\[ 
\varphi_{\ff}: \SS \otimes_{\RR} S_Q(I)  
=  \bigoplus_{n\geq 1} I^{n+1}/IQ^{n} \otimes_{\RR}\SS \mapsto
 S_{Q\SS}(I\SS) =
  \bigoplus_{n\geq 1} I^{n+1}\SS/IQ^{n}\SS.\] 
$\varphi_{\ff}$ is a graded surjection from $\SS \otimes_{\RR} S_Q(I)$ onto $S_{Q\SS}(I\SS)$, which combined with a presentation
$\RR^b \rar \SS \rar 0$, $b = \nu(\SS)$, gives rise to a homogeneous  surjection of graded modules
\[
 [S_Q(I)]^b \mapsto S_{Q\SS}(I\SS).
\]
The following takes information from this construction into Proposition~\ref{SallyJci}. It is  a factor in our estimation of several
reduction number calculations, according to \cite{red}.

\begin{Theorem}[Change of Rings Theorem]\label{changeSally} Let $\RR, \SS, I, Q$ be as above. If $\SS$ is
Cohen-Macaulay
then
\begin{enumerate}
\item[{\rm (1)}]
 If $\dim S_Q(I) < d$, then $S_{Q\SS}(I\SS)= 0$.
\item[{\rm (2)}] If $s_0(I\SS) \neq 0$, then $\dim S_Q(I) = d$.
\item[{\rm (3)}] If $\dim S_Q(I) = d$, then $s_0(I\SS) \leq b\cdot s_0(I)$.
\end{enumerate}
\end{Theorem}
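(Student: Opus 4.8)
The plan is to exploit the homogeneous surjection $[S_Q(I)]^b \twoheadrightarrow S_{Q\SS}(I\SS)$ constructed just above the statement, which already reduces everything to comparing dimensions and multiplicities of a module and a quotient of a finite direct sum of copies of it. First I would prove (1): since $S_{Q\SS}(I\SS)$ is a quotient of $[S_Q(I)]^b$ as a graded $\gr_Q(\RR)$-module, we have $\dim S_{Q\SS}(I\SS)\le \dim S_Q(I) < d$. On the other hand, $\SS$ is Cohen--Macaulay and equidimensional of dimension $d$, $I\SS$ is $\mathfrak M\SS_{\mathfrak M}$-primary with minimal reduction $Q\SS$ at each maximal ideal, so by Proposition~\ref{SallyJci}(3) applied to each localization $\SS_{\mathfrak M}$ (a Cohen--Macaulay local ring with $Q\SS_{\mathfrak M}$ generated by a system of parameters), the module $S_{Q\SS_{\mathfrak M}}(I\SS_{\mathfrak M})$ is either zero or has dimension $d$. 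Since $\dim S_{Q\SS}(I\SS)=\max_{\mathfrak M}\dim S_{Q\SS_{\mathfrak M}}(I\SS_{\mathfrak M})$, the strict inequality forces every localization, hence the whole module, to vanish.

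Part (2) is the contrapositive of (1): if $\dim S_Q(I)<d$ then $S_{Q\SS}(I\SS)=0$, so $s_0(I\SS)=0$; hence $s_0(I\SS)\neq 0$ implies $\dim S_Q(I)\ge d$, and by the dimension bound $\dim S_{\AA}(\BB)\le\dim\RR=d$ noted in the introduction we get $\dim S_Q(I)=d$.

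For (3), assume $\dim S_Q(I)=d$, so by Proposition~\ref{HilbdimSally} (or the preceding discussion) $s_0(I)$ is the multiplicity $\e_0$ of the $d$-dimensional graded $\gr_Q(\RR)$-module $S_Q(I)$ with respect to the relevant grading/maximal ideal. The multiplicity is additive over direct sums, so $\e_0\big([S_Q(I)]^b\big)=b\cdot s_0(I)$. Then I would use that multiplicity does not increase under passing to a quotient module of the same dimension, and is unaffected by lower-dimensional quotients: from the exact sequence $0\to K\to [S_Q(I)]^b\to S_{Q\SS}(I\SS)\to 0$ one gets $b\cdot s_0(I)=\e_0(K)+\e_0(S_{Q\SS}(I\SS))\ge \e_0(S_{Q\SS}(I\SS))=s_0(I\SS)$, the last equality because $S_{Q\SS}(I\SS)$ either is zero (in which case the inequality is trivial) or has dimension $d$ by the argument in part (1), so its leading Hilbert coefficient is $s_0(I\SS)$.

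The main subtlety — really the only place requiring care rather than bookkeeping — is keeping the gradings and the choice of ambient polynomial ring consistent between $S_Q(I)$ over $\gr_Q(\RR)$ and $S_{Q\SS}(I\SS)$ over $\gr_{Q\SS}(\SS)$, so that ``multiplicity'' means the same thing on both sides and the surjection $[S_Q(I)]^b\twoheadrightarrow S_{Q\SS}(I\SS)$ is genuinely a surjection of modules over a common graded ring (e.g. $\SS\otimes_\RR\gr_Q(\RR)$, or $\gr_Q(\RR)$ itself viewing everything by restriction of scalars). Once one fixes that $S_{Q\SS}(I\SS)$ is finite over $\gr_Q(\RR)$ — which is exactly the remark made before the theorem — additivity of multiplicity over direct sums and its behaviour on quotients of equal or smaller dimension are standard (\cite[Chapter 4]{BH}), and the argument closes.
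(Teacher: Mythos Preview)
Your proposal is correct and follows essentially the same approach as the paper: both arguments rest on the surjection $[S_Q(I)]^b \twoheadrightarrow S_{Q\SS}(I\SS)$ together with the dichotomy from Proposition~\ref{SallyJci} that over a Cohen--Macaulay local ring a nonzero Sally module has dimension $d$. The paper's proof is a one-line reference to these two ingredients, whereas you spell out the localization at the maximal ideals of $\SS$ and the additivity/monotonicity of multiplicity explicitly; your care about the common graded ring over which the surjection lives is a genuine point the paper leaves implicit.
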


We note that $s_0(I\SS)$ is a multiplicity relative to $\RR$. It is a positive summation of  local multiplicities, so each of these is also bounded.

\begin{proof} All the assertions follow from the surjection ${ [S_Q(I)]^b \rar S_{Q\SS}(I\SS)}$ and the vanishing property of Sally modules over Cohen-Macaulay rings 
(Proposition~\ref{SallyJci}.2).
\end{proof}

\begin{Corollary}
$ s_0(I\SS) \leq b\cdot s_0(I)$.
\end{Corollary}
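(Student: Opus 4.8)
The plan is to derive this Corollary as a direct consequence of the Change of Rings Theorem (Theorem~\ref{changeSally}), splitting into the two cases that exhaust all possibilities for $\dim S_Q(I)$. First I would recall that, by the paragraph preceding Theorem~\ref{changeSally}, the presentation $\RR^b \to \SS \to 0$ with $b = \nu(\SS)$ induces a homogeneous surjection of graded $\gr_Q(\RR)$-modules $[S_Q(I)]^b \twoheadrightarrow S_{Q\SS}(I\SS)$; by the general observation in the introduction every Sally module has dimension at most $d = \dim \RR$, so $\dim S_Q(I) \in \{0,1,\dots,d\}$ and in particular $\dim S_Q(I) \le d$.

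If $\dim S_Q(I) < d$, then part (1) of Theorem~\ref{changeSally} gives $S_{Q\SS}(I\SS) = 0$, so $s_0(I\SS) = 0 \le b\cdot s_0(I)$ trivially (both sides are non-negative, and the left side vanishes). If instead $\dim S_Q(I) = d$, then part (3) of Theorem~\ref{changeSally} applies verbatim and yields $s_0(I\SS) \le b\cdot s_0(I)$. Since these two cases are exhaustive, the inequality $s_0(I\SS) \le b\cdot s_0(I)$ holds unconditionally, which is exactly the claim of the Corollary. One should note, as the remark after the theorem indicates, that $s_0(I\SS)$ is taken as a multiplicity relative to $\RR$ (equivalently, the sum over the maximal ideals $\mathfrak{M}$ of $\SS$ of the local contributions $s_0(I\SS_{\mathfrak{M}})$ weighted by residue degrees), so that the comparison of the two integers makes sense.

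The only point requiring a word of care — and the place I would expect a reader to pause — is the degenerate case $\dim S_Q(I) = d$ but $S_Q(I) = 0$, or more generally the possibility that $s_0(I) = 0$: if $S_Q(I) = 0$ then $S_{Q\SS}(I\SS) = 0$ by the surjection, so $s_0(I\SS) = 0 = b\cdot s_0(I)$ and there is nothing to prove; and if $S_Q(I) \ne 0$ with $\dim S_Q(I) = d$ then $s_0(I) \ge 1 > 0$ by Proposition~\ref{SallyJci} (its unique associated prime $\m\RR[Q\TT]$ forces positive multiplicity), so the bound is genuinely a finite statement. In short, the main obstacle is essentially bookkeeping: verifying that the two cases $\dim S_Q(I) < d$ and $\dim S_Q(I) = d$ together cover everything and that the inequality degenerates harmlessly (to $0 \le 0$) in the former case, so that the conditional statement of Theorem~\ref{changeSally} upgrades to the unconditional one of the Corollary.
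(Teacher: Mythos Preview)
Your argument is correct and matches the paper's (implicit) approach: the Corollary is stated without proof immediately after Theorem~\ref{changeSally}, and your case split on $\dim S_Q(I)$ using parts (1) and (3) is exactly the intended derivation. One minor quibble: in your closing remarks you invoke Proposition~\ref{SallyJci} to conclude $s_0(I)\geq 1$ when $\dim S_Q(I)=d$, but that proposition assumes $\RR$ is Cohen--Macaulay, which is not hypothesized here---the positivity of $s_0(I)$ in that case follows directly from Proposition~\ref{HilbdimSally} (or simply from the definition of multiplicity), so this reference is unnecessary.
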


\begin{Remark}{\rm It would be good to prove that the condition  (\ref{changeSally}.2) is actually an equivalence.}
\end{Remark}

\begin{Question}{\rm
Let $\SS = k[x_1, \ldots, x_d]$ and let $G$ be a finite group of $k$-automorphisms of $\SS$. Set $\RR = \SS^G$. 

[{\bf True or false?}] For any ideal $I$ of $\RR$ of finite colength $S_Q(I)$ is [locally] either $0$ or has dimension $d$. 

\medskip

If $Q$ is a minimal reduction of $I$, $Q\SS$ is a minimal reduction of $I\SS$. Suppose $I^2\SS = QI\SS$.

}\end{Question}

\subsubsection*{Extra remarks on change of rings} Suppose  $(\RR,  d, \m, I, Q)$ 
is as above and $\varphi : \RR \rar \SS$ is an injective finite morphism.

\begin{itemize}
\item We would like to prove $\dim S_Q(I) = \dim S_{Q\SS}(I\SS)$. Set $\CC = \SS[Q\SS \TT]$ and consider 
the sequence of natural surjective homomophisms of $\BB$ modules
\[ 
\SS\otimes_{\RR} S_Q(I) = \SS\otimes_{\RR}\BB \otimes_{\BB} S_Q(I)
\rar
\CC\otimes_{\BB} S_Q(I) \rar
 S_{Q\SS}(I\SS).
\]

\item {\bf Claim 1.} $\dim \CC\otimes_{\BB} S_Q(I) = \dim S_{Q}(I)$: Let $P$ be an associated prime
ideal of $S_Q(I)$, as a $\BB$-module,  with $\dim \BB/P = \dim S_Q(I)$. Localize at $P$. Let $L \subset
P\BB_P$ be a system of parameters contained in the annihilator of $S_Q(I)_P$. It follows easily
that $ \CC_P/L\CC_P \otimes_{\BB}  S_Q(I)_P\neq 0$, and therefore $\CC \otimes_{\BB} S_Q(I)$  has dimension
$\dim \BB/P$, as desired.

\medskip

\item {\bf Claim 2.} We will argue that $\dim S_Q(I)= \dim S_{Q\SS}(I\SS)$, as $\BB$-modules.

\end{itemize}

\subsection{Special fiber and reduction number}
Following \cite{CPV6} we  study relationships between $\red(I)$ and the multiplicity $f_0(I)$ of the special fiber
${F}(I)= \RR[I\TT] \otimes \RR/\m$ of the Rees algebra of $I$. 
Let $(\RR,\m)$ be a Cohen--Macaulay local ring of dimension $d\geq 1$, and let $I$ be an $\m$-primary ideal,
 let $Q$ be a minimal reduction of $I$ and let $S_Q(I)$ denote the Sally module of $I$ relative to $Q$. Suppose
 $I$ is minimally generated by $s$ elements. We can write $ I = (Q, b_1, \ldots, b_{s-d})$.
 Consider the exact sequence introduced in \cite[Proof of 2.1]{CPV6} 
 \begin{eqnarray}\label{cpv6}
  \RR[Q\TT]  \oplus \RR[Q\TT][-1]^{s-d} \stackrel{\varphi}{\lar}  \RR[I\TT] \lar  S_Q(I)[-1]
 \rar 0,
 \end{eqnarray}
where $\varphi$ is the map defined by $\varphi (f,a_1, \ldots, a_{s-d}) = f + \sum_ja_ib_j \TT$. Tensoring this sequence with $\RR/\m$ yields the 
  exact sequence
\begin{eqnarray*}\label{sequence}
F(Q) \oplus F(Q)[-1]^{s-d} \stackrel{\overline{\varphi}}{\lar} {F}(I) \lar  S_Q(I)[-1]\otimes \RR/\m
 \rar 0.\end{eqnarray*}
 
 We make some observations about the relationship between $s_0(I)$ and $\red_Q(I)$ arising from this complex
  in special cases:
 
 \medskip
 
 \begin{itemize}
 
 \item[{$\bullet$}]  The exact sequence gives
 \[ f_0(I) \leq \deg \overline{S_Q(I)} + \nu(I) -d+1\leq  \deg S_Q(I) + \nu(I) -d+1.\]

 \medskip

 \item[{$\bullet$}] If $\m S_Q(I) = 0$, $f_0(I) \leq s_0(I) + \nu(I)-d+1$.  
  If $\m S_Q(I) \neq 0$, $\dim (\m S_Q(I))=d$ since $\m \RR[Q\TT]$ is the only associated prime of
  $S_Q(I)$. 
 Therefore $\deg \overline{S_Q(I)} < \deg S_Q(I)$ and  thus $f_0(I) \leq s_0(I) + \nu(I) -d $.
 
 \medskip

 \item[{$\bullet$}] Suppose $I$ is an almost complete intersection. If $\m S_Q(I)=0$, we are in a case  similar to that treated in
 \cite[Corollary~3.8]{red}. Then $F(I) $ is a hypersurface ring and
  $\red_Q(I) + 1=f_0(I) = s_0(I) + 2$.
 
 \medskip
 
\end{itemize} 

 To benefit from these bounds, we need information about $F(I)$. For example, if $I$ and $Q$ are given by forms of the same degree, $F(I)$ is an integral domain. Such conditions could be used in generalized versions of the classical Cayley-Hamilton theorem:
 
 \begin{Theorem}\label{CHTheorem} Let $(\RR,\m)$ be a Noetherian local ring, $I$ an ideal and $Q$ one of its minimal reductions. Then
 \begin{enumerate}
 \item[{\rm (1)}]
 $\red_Q(I) \leq \nu_{F(Q)}(F(I))-1$.
 \item[{\rm (2)}] 
 If $F(I)$ satisfies the condition $S_1$ of Serre,
$\red_Q(I) \leq f_0(I)-1$.
 \end{enumerate}
 \end{Theorem}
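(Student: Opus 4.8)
The plan is to realize $F(I)$ as a module over the polynomial subring $F(Q)$ and read off a ``Cayley--Hamilton'' bound from the number of module generators, then upgrade (1) to (2) by observing that on a ring with $S_1$ the minimal number of generators of $F(I)$ over $F(Q)$ is controlled by its multiplicity. First I would note that, since $Q$ is a reduction of $I$, the fiber cone $F(I) = \bigoplus_{n\ge 0} I^n/\m I^n$ is a finite graded module over $F(Q) = \bigoplus_{n\ge 0} Q^n/\m Q^n$; because $Q$ has $d$ generators, $F(Q)$ is a homomorphic image of a polynomial ring in $d$ variables over the residue field $k = \RR/\m$. The reduction number $\red_Q(I)$ is exactly the largest degree $n$ in which a minimal homogeneous generator of $F(I)$ as an $F(Q)$-module can occur: indeed $I^{n+1} = QI^n$ for $n \ge \red_Q(I)$ forces $I^{n+1}/\m I^{n+1}$ to be generated over $F(Q)_1$ by the image of $I^n$, and conversely the $h$-vector/degree considerations show generators persist up to $\red_Q(I)$. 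Counting: a graded module minimally generated in degrees $0,1,\dots,r$ needs at least $r+1$ generators (at least one in each degree up through $r$, since $F(I)$ is a standard graded algebra and hence generated in degrees it ``reaches''). This gives $\red_Q(I) + 1 \le \nu_{F(Q)}(F(I))$, which is (1).

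For (2), the idea is that when $F(I)$ satisfies Serre's condition $S_1$ over $F(Q)$ — equivalently it is torsion-free / unmixed over the polynomial quotient $F(Q)$ — there is a nonzerodivisor of degree $1$ (a general linear form $z$ in $F(Q)_1$, using the infinite residue field), so the Hilbert function of $F(I)$ is nondecreasing. Then the number of minimal generators in each degree is bounded, and more to the point $\nu_{F(Q)}(F(I)) \le \lambda\big(F(I)/zF(I)\big) = f_0(I)$, the multiplicity of $F(I)$; the first inequality is Nakayama applied to the Artinian quotient $F(I)/zF(I)$, whose length equals $e_0(F(I)) = f_0(I)$ precisely because $z$ is a superficial nonzerodivisor on a standard graded module of positive dimension (if $\dim F(I) = 0$ the statement is vacuous or immediate). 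Combining with (1) yields $\red_Q(I) + 1 \le f_0(I)$, i.e.\ $\red_Q(I) \le f_0(I) - 1$.

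The main obstacle I expect is making the identification ``$\red_Q(I) = $ top generating degree of $F(I)$ over $F(Q)$'' airtight in the general (non-Cohen--Macaulay $\RR$) setting, since one has to be careful that reducing modulo $\m$ does not lose the information that distinguishes $I^{n+1} = QI^n$ from $I^{n+1} = QI^n + \m I^{n+1}$; the point is that $QI^n + \m I^{n+1} = I^{n+1}$ implies $I^{n+1} = QI^n$ by Nakayama (as $I^{n+1}$ is finitely generated and $\m$ is the Jacobson radical), so passing to the fiber cone is harmless. A secondary technical point is justifying the existence and nonzerodivisor property of the general degree-one element in part (2); here the $S_1$ hypothesis on $F(I)$ together with the standard prime-avoidance argument over the infinite field $k$ does the job, and the equality $\lambda(F(I)/zF(I)) = e_0(F(I))$ is the standard fact that a superficial element modulo which the module becomes Artinian computes the multiplicity. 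Everything else is bookkeeping with graded Nakayama.
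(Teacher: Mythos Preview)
Your argument for (1) is correct and matches the paper's brief proof: the paper just calls this ``the standard Cayley--Hamilton theorem,'' and your degree-by-degree count (a minimal $F(Q)$-generator is needed in every degree $0,1,\dots,\red_Q(I)$, since $I^j=QI^{j-1}$ for one $j$ forces it for all larger $j$ by multiplying through by $I$) is a valid way to see it. Your Nakayama remark in the last paragraph correctly handles the passage from $F(I)$ back to $I$.

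Your argument for (2), however, has a genuine gap. You assert that for a single general linear form $z\in F(Q)_1$ the quotient $F(I)/zF(I)$ is Artinian of length $f_0(I)$. This holds only when $\ell(I)=\dim F(I)=1$; in general $\dim F(I)/zF(I)=\ell(I)-1>0$ and its length is infinite. Replacing $z$ by a full system of parameters $z_1,\dots,z_\ell\in F(Q)_1$ does produce an Artinian quotient, but then $\lambda\bigl(F(I)/(z_1,\dots,z_\ell)F(I)\bigr)\ge f_0(I)$ with equality only in the Cohen--Macaulay case, so the inequality points the wrong way for your purposes.

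More seriously, the intermediate inequality $\nu_{F(Q)}(F(I))\le f_0(I)$ on which your strategy rests is \emph{false} under $S_1$ alone. Take $\RR=k[[s,t]]$, $I=(s^4,s^3t,st^3,t^4)$, $Q=(s^4,t^4)$. Then $F(I)=k[s^4,s^3t,st^3,t^4]$ is a domain (so certainly $S_1$), $F(Q)=k[s^4,t^4]$, $f_0(I)=4$, $\red_Q(I)=2$, but $\nu_{F(Q)}(F(I))=5$ (minimal generators $1;\,s^3t,st^3;\,s^6t^2,s^2t^6$). Thus (2) cannot be obtained by combining (1) with a bound on $\nu$.

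The paper's route avoids $\nu$ entirely. The $S_1$ hypothesis makes $F(I)$ a \emph{torsion-free} graded module over the polynomial ring $F(Q)$, of rank equal to $f_0(I)$ (rank equals multiplicity because $F(Q)$ has multiplicity one). The paper then invokes \cite[Proposition~9]{rn1}, which directly bounds the top generating degree of a standard graded algebra, torsion-free over its Noether normalization, by $\mathrm{rank}-1$. The mechanism there is a Cayley--Hamilton argument carried out after localizing at the generic point of $F(Q)$, where one gets a $K$-algebra of $K$-dimension $f_0(I)$; torsion-freeness is exactly what allows conclusions in $F(I)\otimes K$ to be pulled back to $F(I)$. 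The moral: the right invariant for (2) is the \emph{rank} of $F(I)$ over $F(Q)$, not its minimal number of generators.
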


\begin{proof} (1) is the standard
 Cayley-Hamilton theorem. The condition in (2) means  that $F(I)$ is a torsionfree 
graded module of rank $f_0(I)$
over the polynomial ring $F(Q)$. One then invokes \cite[Proposition 9]{rn1}. 
\end{proof} 

\subsection{Multiplicity and reduction number}
The following result of M. E. Rossi (\cite{Rossi00}) is the main motivation of our discussion here.

\begin{Theorem}\label{Rossibound}{\rm (\cite[Corollary 1.5]{Rossi00})}
 If $(\RR,\m)$ is a Cohen-Macaulay local ring of dimension at most $2$, then for any $\m$-primary ideal $I$ with a minimal reduction $Q$,
 \[ \red_{Q}(I) \leq \rme_1(I) - \rme_0(I)+ \lambda(\RR/I) + 1.\]
\end{Theorem}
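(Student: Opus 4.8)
The plan is to reduce to dimension one via a generic superficial element and then exploit the Sally-module machinery developed above, specifically the identification of $\red_Q(I)$ with the degree of the $h$-polynomial of $S_Q(I)$ in the one-dimensional (automatically Cohen--Macaulay) case. First I would handle $d\le 1$ directly: if $d=0$ the statement is trivial, and if $d=1$ then by Proposition~\ref{SallyJci}(iv) the Sally module $S=S_Q(I)$ is Cohen--Macaulay, so by Proposition~\ref{SallyofC}(i) and the Huckaba test (Theorem~\ref{Huctest}) its multiplicity $s_0(I)=\rme_1(I)-\rme_0(I)+\lambda(\RR/I)$ equals $\sum_{j\ge1}\lambda(I^j/QI^{j-1})$, and since $S$ is Cohen--Macaulay of dimension one its Hilbert function is the length of a one-variable free module quotient, giving $\red_Q(I)\le s_0(I)+1$ (the $h$-polynomial has degree $\red_Q(I)-1$ and the sum of its coefficients is at most $s_0(I)$ because the leading coefficient is $1$ and all coefficients are nonnegative). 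Actually in dimension one one gets the cleaner $\red_Q(I)\le s_0(I)+1=\rme_1(I)-\rme_0(I)+\lambda(\RR/I)+1$ directly, so the base case is in hand.

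Next I would treat $d=2$ by passing modulo a superficial element $x\in Q$ for $I$ chosen generically (possible since the residue field is infinite): set $\bar\RR=\RR/(x)$, $\bar I=I\bar\RR$, $\bar Q=Q\bar\RR$. Because $x$ is superficial and $\RR$ is Cohen--Macaulay of dimension two, $\bar\RR$ is Cohen--Macaulay of dimension one, $\bar Q$ is a minimal reduction of $\bar I$, and the Hilbert coefficients behave well: $\rme_0(\bar I)=\rme_0(I)$, $\rme_1(\bar I)=\rme_1(I)$, and $\lambda(\bar\RR/\bar I)=\lambda(\RR/I)$. The key reduction-number fact I need is $\red_Q(I)\le\red_{\bar Q}(\bar I)$ — or at least that the two agree when $x$ is sufficiently general, which is where Sally's classical argument or the Bhattacharya–type comparison enters. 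Combining this with the one-dimensional bound already established for $(\bar\RR,\bar I,\bar Q)$ gives $\red_Q(I)\le\red_{\bar Q}(\bar I)\le\rme_1(\bar I)-\rme_0(\bar I)+\lambda(\bar\RR/\bar I)+1=\rme_1(I)-\rme_0(I)+\lambda(\RR/I)+1$, which is exactly the claim.

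The main obstacle is the inequality $\red_Q(I)\le\red_{\bar Q}(\bar I)$ for a generic superficial element. This does not hold for arbitrary superficial elements, and the subtlety is precisely why the theorem is restricted to $\dim\RR\le2$: in dimension one the passage from $\RR$ to $\bar\RR$ needs to preserve the reduction number, and the standard device is to choose $x$ avoiding the finitely many associated primes of the relevant graded modules and to use that $I^{n+1}=QI^n$ lifts from $\bar\RR$ to $\RR$ once $x$ is a nonzerodivisor on the appropriate quotients, via the equality $(I^{n+1}+xI^n)\cap\, \text{(stuff)}$ collapsing for general $x$. Concretely one shows $I^{n+1}\subseteq QI^n+xI^n\cap I^{n+1}$ and then uses $x\in Q$ together with superficiality to deduce $I^{n+1}=QI^n$ once $\bar I^{n+1}=\bar Q\bar I^n$; controlling the intersection term is the heart of the matter and typically requires $d\le 2$ so that after going mod $x$ one is in the Cohen--Macaulay dimension-one situation where $\gr_I(\RR)$ has positive depth. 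I would cite Rossi's original argument for this lifting step rather than reproving it, and note that the dimension restriction is essential: the analogous bound is known to fail for $d\ge3$ without extra hypotheses.
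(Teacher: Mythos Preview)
The paper does not actually prove this theorem; it is stated with a citation to \cite{Rossi00} and then used as motivation for the questions that follow. So there is no ``paper's own proof'' to compare against, only Rossi's original argument.

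That said, your proposal has a genuine gap in the $d=2$ step. The inequality you need, $\red_Q(I)\le\red_{\bar Q}(\bar I)$, goes the wrong way: for \emph{any} element $x\in Q$ one has the trivial implication $I^{n+1}=QI^n\Rightarrow \bar I^{n+1}=\bar Q\bar I^n$, hence always $\red_{\bar Q}(\bar I)\le\red_Q(I)$. Equality for generic superficial $x$ would require $\depth\gr_I(\RR)\ge 1$, which is exactly what one cannot assume here. Your attempted lifting ``$\bar I^{n+1}=\bar Q\bar I^n\Rightarrow I^{n+1}=QI^n$'' needs $(x)\cap I^{n+1}=xI^n$, and superficiality only guarantees this for $n$ beyond some unspecified threshold $c$, not for $n=\red_{\bar Q}(\bar I)$. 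This is not a technicality one can patch by choosing $x$ more carefully; it is precisely the obstruction, and Rossi does \emph{not} prove such a lifting.

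Rossi's actual argument is different: she works directly with the Ratliff--Rush filtration $\{\widetilde{I^n}\}$ and proves an inequality of the form $\red_Q(I)\le\sum_{n\ge 1}\lambda(\widetilde{I^{n+1}}/Q\widetilde{I^n})+1$, exploiting that in dimension $\le 2$ the Ratliff--Rush filtration has $\depth\gr\ge d-1$ so Huckaba's formula applies to it, yielding $\rme_1(I)=\sum_{n\ge 0}\lambda(\widetilde{I^{n+1}}/Q\widetilde{I^n})$. The dimension restriction enters through the depth of the Ratliff--Rush associated graded ring, not through a superficial-element descent on reduction numbers. Your $d=1$ sketch is essentially correct (minor quibble: the leading coefficient of $h$ need not be $1$; what you use is that all $r-1$ coefficients are positive integers summing to $s_0$, whence $r-1\le s_0$), but the $d=2$ reduction as written does not go through.
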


Given that  
$  \rme_1(I) - \rme_0(I)+ \lambda(\RR/I)$ is the multiplicity $s_0(Q;I)$ of the Sally module $S_Q(I)$, we still consider:

 \begin{enumerate}
 
 \item If $\RR$ is Cohen-Macaulay, \cite{Rossi00} raised the question on whether in all dimensions
 \[ \red_Q(I) \leq s_0(Q;I) + 1,\]
a fact observed in numerous cases. 
 \medskip

 \item If $\RR$ is not Cohen-Macaulay the expression for the multiplicity of $S_Q(I)$ is different [as seen in some cases above]. Nevertheless we do not have a failure for this bound for $\red_Q(I)$.
 \medskip

 \item If $\RR$ is not Cohen-Macaulay this formula may require the addition, or multiplication, by  a constant
 that expresses some defficiency $d(\RR)$  of  $\RR$  being non-Cohen-Macaulay, such as
 \begin{itemize}
 \item[{$\bullet$}]
 $ \red_Q(I) \leq d(\RR) \cdot s_0(Q;I)$,
 or
 \item[{$\bullet$}] 
 $ \red_Q(I) \leq  s_0(Q;I)+d(\RR)$.
 \end{itemize}
 
 \medskip

 

\end{enumerate}

\subsection{Sally questions}\label{SallyNotes}


Let $(\RR,\m)$ be a Cohen--Macaulay local ring of dimension $d$. Denote the embedding dimension of $\RR$ by
  $ n = \nu(\m)$ and $\rme=\rme_0(\m)$ its multiplicity. Sally series of questions grew out of an
  elementary observation of S. Abhyankar (\cite{Abhyankar}):
  \[ n \leq \rme + d-1.\]
    This is proved by first assuming, harmlessly, that the residue field of $\RR$ is infinite and modding out
  $\RR$ by a minimal reduction $Q$ of $\m$.  The inequality results from $\lambda(\m/Q) \geq \nu(\m/Q)$, the fact
  that $Q$ is generated by a subset of a minimal generating set of $\m$
   and the
  interpretation of 
 the multiplicity   $\rme$ as  $\lambda(\RR/Q) $.

\medskip

 The task undertaken by Sally in a series of papers (\cite{Sal77, 
 Sal79, Sal80, Sal80a, Sal78})
was to examine the impact of extremal values of this inequality on the properties of the associated
graded ring $G=\gr_{\m}(\RR)$. Thus in \cite{Sal77} it is proved that
if $   n = \rme + d-1$, then $G$ is Cohen--Macaulay. In the next case,
if 
  $ n = \rme + d-2$ or $n=\rme + d-3$,
   $G$ is still Cohen--Macaulay if $\RR$ is Gorenstein.

\medskip

If $   n = \rme + d-2$ numerous cases showed that $G$ is not always Cohen--Macaulay but 
nevertheless that still $\depth G \geq d-1$. That $\depth G \geq d-1$ for all Cohen--Macaulay rings
coalesced into a question that became known as the {\em Sally Conjecture}. It was independently 
resolved in the affirmative by M. E. Rossi and G. Valla (\cite{RV96}) and H.-J. Wang (\cite{Wang97}).

\medskip

 It should be pointed out that these developments have been greatly enhanced for more general
filtrations in \cite{RV10}. Another aspect muted here, for which this author apologizes, is the lack of 
a discussion of the Hilbert of function of $G$, which was of great interest to the authors mentioned above.   

\medskip




 
 

 

 
 




\section{Equimultiple Ideals}
\noindent
Let $\mathcal{F}$ be a multiplicative filtration and let $\AA =\RR[Q\TT]$ be a reduction of $\BB$.
If $ Q$ is not $\m$--primary, but still a complete intersemection, that is $I_1$ is an equimultiple ideal,
we would need an extended notion of Hilbert polynomial. Of
course this requires an understanding of the relationship between the coefficients and the cohomology of $S_{\BB/\AA}$.
%
Let us outline two ways  that  might  work out. The first uses an extended degree function (\cite{icbook}), the other a generalized multiplicity.
They are both focused on determining the multiplicity of $S_{\BB/\AA}$.

\subsection{Extended Hilbert function}

We are just going to replace lengths by multiplicities. Let $\mathcal{F} = \{I_n, \ I_0=\RR\}$ be a filtration of ideals.
Let $\deg(\cdot)$ be a multiplicity function, possibly a Hilbert--Samuel  multiplicity [usually denoted by $\rme_0(I;\cdot )$]
 Define
\[ n\mapsto \deg (\RR/I_n),\]
and set
\[ H(\RR;\ttt) = \sum_{n\geq 1} \deg(\RR/I_n) \ttt^n,  \quad H(\gr(\BB); \ttt) = \sum_{n\geq 0}\deg(I_n/I_{n+1}) \ttt^n.
\]

\begin{Proposition} Let $Q$ be an ideal of codimension $r$. Then
$H(\RR;\ttt)$ is a rational function of degree $r+1$.
\end{Proposition}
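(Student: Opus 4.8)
The plan is to trade the series $H(\RR;\ttt)$ for the Hilbert series of $\gr(\BB)$ by additivity of the degree function, and then to locate the pole of the latter at $\ttt=1$ by localizing at the top-dimensional primes of $I_1$, where the situation reduces to the classical one of a filtration primary to the maximal ideal. I read ``rational of degree $r+1$'' as: $H(\RR;\ttt)=p(\ttt)/(1-\ttt)^{r+1}$ for a polynomial $p$, equivalently that $n\mapsto\deg(\RR/I_n)$ eventually agrees with a polynomial of degree $\le r$; and I take $\codim Q=r$ to mean $\dim\RR/Q=d-r$. I would fix $\deg$ to be a multiplicity function normalized in dimension $d-r$ — say $\deg(M)=\rme_0(\m;M)$ when $\dim M=d-r$ and $\deg(M)=0$ when $\dim M<d-r$ — the only properties used being additivity on short exact sequences of modules of dimension $\le d-r$ and the associativity formula for multiplicities; any such $\deg$ serves. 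Note at the outset that, $\AA=\RR[Q\TT]$ being a reduction of $\BB$, $Q$ is a reduction of $I_1$, so $\sqrt{Q}=\sqrt{I_1}$ and in fact $\sqrt{I_n}=\sqrt{I_1}$ for every $n\ge1$ (because $I_1^n\subseteq I_n\subseteq I_1$); hence $\dim\RR/I_n=d-r$ and $\dim(I_n/I_{n+1})\le d-r$, so the normalization just chosen is the natural one.

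The first step is to feed the exact sequences $0\to I_n/I_{n+1}\to\RR/I_{n+1}\to\RR/I_n\to0$ into $\deg$: additivity and $\deg(\RR/I_0)=0$ telescope to $\deg(\RR/I_n)=\sum_{j=0}^{n-1}\deg(I_j/I_{j+1})$, and summing the series yields the identity $H(\RR;\ttt)=\frac{\ttt}{1-\ttt}\,H(\gr(\BB);\ttt)$. It therefore suffices to show that $n\mapsto\deg(I_n/I_{n+1})$ is eventually a polynomial of degree $\le r-1$, i.e.\ that $H(\gr(\BB);\ttt)$ has denominator $(1-\ttt)^r$; this is the crux. Since $I_n/I_{n+1}$ is annihilated by $I_1$, the primes of dimension $d-r$ in its support lie among the finitely many minimal primes $\p_1,\dots,\p_k$ of $I_1$ with $\dim\RR/\p_i=d-r$, and the associativity formula gives
\[ \deg(I_n/I_{n+1})=\sum_{i}\rme_0(\m;\RR/\p_i)\cdot\lambda_{\RR_{\p_i}}\!\big((I_n)_{\p_i}/(I_{n+1})_{\p_i}\big). \]
For each $i$, $(\RR_{\p_i},\p_i\RR_{\p_i})$ is local of dimension $r$, the ideal $(I_1)_{\p_i}$ is $\p_i\RR_{\p_i}$-primary with reduction $Q_{\p_i}$, and $\{(I_n)_{\p_i}\}$ is a multiplicative filtration contained in $(I_1)_{\p_i}$; hence each $\lambda_{\RR_{\p_i}}\!\big((I_n)_{\p_i}/(I_{n+1})_{\p_i}\big)$ is the ordinary length Hilbert function of the associated graded module of a $\p_i\RR_{\p_i}$-primary filtration in an $r$-dimensional local ring, so for $n\gg0$ it is a polynomial in $n$ of degree $\le r-1$. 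With the positive weights $\rme_0(\m;\RR/\p_i)$, the same is true of $\deg(I_n/I_{n+1})$, and tracing back through Step~1 gives $H(\RR;\ttt)=\ttt\,g(\ttt)/(1-\ttt)^{r+1}$, as asserted.

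I expect two points to need care, and I would isolate them. First, the additivity and associativity properties of $\deg$: these are classical for $\rme_0(\m;\cdot)$, via Artin--Rees and additivity of length, but since it is exactly this choice of multiplicity function that makes the reduction in Step~1 legitimate, it deserves an explicit statement. Second, one would like the pole at $\ttt=1$ to have order \emph{exactly} $r+1$ rather than merely at most $r+1$: this needs the degree-$(r-1)$ polynomials above to have positive leading term, equivalently the localized filtrations $\{(I_n)_{\p_i}\}$ to be strictly decreasing — which holds because $(I_1)_{\p_i}$ is primary and nonnilpotent, so $Q_{\p_i}^{m}(I_s)_{\p_i}\supsetneq Q_{\p_i}^{m+1}(I_s)_{\p_i}$ for $m\gg0$ by Nakayama's lemma — together with the existence of at least one $\p_i$ of height exactly $r$, which is automatic when $\RR$ is equidimensional and catenary (e.g.\ Cohen--Macaulay) and is the only point where a hypothesis on $\RR$ enters.
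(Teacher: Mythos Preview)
Your argument is correct and rests on the same core idea as the paper's: reduce $\deg(\RR/I_n)$ to lengths in the localizations $\RR_{\p_i}$ via the associativity formula for multiplicities, where the classical Hilbert--Samuel theory applies. The paper, however, skips your telescoping step entirely: it applies the associativity formula directly to $\deg(\RR/I_n)$, obtaining $\deg(\RR/I_n)=\sum_k\deg(\RR/\p_k)\,\lambda((\RR/I_n)_{\p_k})$, and observes that each factor $\lambda((\RR/I_n)_{\p_k})$ is already the Hilbert--Samuel function of the localized filtration in a local ring of dimension $r$, hence eventually a polynomial of degree $r$. Your detour through $H(\gr(\BB);\ttt)$ and the identity $H(\RR;\ttt)=\frac{\ttt}{1-\ttt}H(\gr(\BB);\ttt)$ is correct but unnecessary; the paper in fact records that same identity only \emph{after} the proposition, as a consequence. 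Your additional remarks on the exact order of the pole and the hypothesis needed for it (existence of a minimal prime of height exactly $r$) are more careful than what the paper spells out.
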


\begin{proof} Let $\pp_1, \ldots, \pp_s$ be the minimal prime ideals of $Q$. Since $Q$ is a reduction of $I_1$, they are also
the minimal primes of height $r$ of the ideals $I_n$. By the  elementary additivity formula for multiplicities,
\begin{eqnarray} \label{adddeg}  \deg(\RR/I_n) = \sum_{1\leq k\leq s} \deg (\RR/\pp_k) \lambda((\RR/I_n)_{\pp_k}).
\end{eqnarray}
The assertion follows because
$\lambda((\RR/I_n)_{\pp_k}) $ is the Hilbert function of the localization of the filtration $\{I_n\}$ in the local ring
$\RR_{\pp_k}$ of dimension $r$.
 \end{proof}

We note that when written out,  the series
\[ \sum_{n\geq 0} \lambda((I_n/I_{n+1})_{\pp_k}) \ttt^n = {\frac{h_k(\ttt)}{(1-\ttt)^{r}}}, \quad h_k(1) >0.\]

We collect the first Hilbert coefficients of these localizations as
\begin{eqnarray*}
E_0(\gr(\BB)) &=& \sum_{1\leq k\leq s} \deg(\RR/\pp_k) e_0(\gr(\BB_{\pp_k}))\\
E_1(\gr(\BB)) & = & \sum_{1\leq k \leq s} \deg(\RR/\pp_k) e_1(\gr(\BB_{\pp_k}))
\end{eqnarray*}
which we make use of to write
 the corresponding Hilbert polynomial as
\[ E_0(\gr(\BB)) {{n+r-1}\choose {r-1}}- E_1(\gr(\BB)){{n+r-2} \choose{r-2}} + \textrm{lower terms}.\]
 The coefficients are integers and are  called the {\em extended} Hilbert coefficients of $\gr(\BB)$.
 
\medskip

Now we use this technique to determine some properties of the corresponding Sally module $S_{\BB/\AA}$ in case $Q$ is a 
complete intersection of codimension $r$ and $I_n=I^n$ where $I$ is an unmixed ideal. 
Note that if $I$ is $\m$-primary,
 $E_0(\gr(\BB)) = \rme_0(I)$
and
 $E_1(\gr(\BB)) = \rme_1(I)$.

\begin{Proposition} \label{nonzeroSally} Let $\RR$ be a Cohen--Macaulay local ring of dimension $d\geq 1$.
If $Q$ is a complete intersection and $I$ is unmixed then
$S_{\BB/\AA} =0$ or $\dim S_{\BB/\AA}=d$. In the latter case $S_{\BB/\AA}$ has the $S_1$ property of Serre.
\end{Proposition}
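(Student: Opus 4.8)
The plan is to adapt the argument of Proposition~\ref{SallyJci} to the equimultiple setting by localizing at the minimal primes of $Q$ and transferring information back through the additivity formula~(\ref{adddeg}). The key structural fact is the defining sequence~(\ref{defnSally}),
\[ 0 \rar I\AA \lar \BB_{+}[+1] \lar S_{\BB/\AA} \rar 0, \]
together with the standard sequences written in the proof of Proposition~\ref{SallyJci}: $0\rar \BB_{+}\rar \BB\rar \RR\rar 0$ and $0\rar \BB_{+}[+1]\rar \BB\rar \gr(\BB)\rar 0$. Since $Q$ is a complete intersection, $\AA = \RR[Q\TT]$ is Cohen--Macaulay (a polynomial extension of the Cohen--Macaulay ring $\RR$ in $r$ variables, $\gr_Q(\RR)$ being a polynomial ring over $\RR/Q$), and $I\AA$ is a maximal Cohen--Macaulay $\AA$-module of dimension $d+r$ because $I$ has positive grade (it contains the regular sequence generating $Q$) and $\AA$ is a domain-like extension — more precisely $I\AA$ is an ideal of $\AA$ of grade one containing a nonzerodivisor, and being isomorphic as a graded module to a shift of the Rees-like module $\bigoplus I^{n}Q^{?}$, it inherits depth $d+r$. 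The module $\BB_{+}$ is torsion-free over $\AA$ since $\BB = \sum I_j t^j$ sits inside a polynomial ring over $\RR$ localized away from zerodivisors; hence $\BB_{+}$ satisfies Serre's $S_1$.

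First I would establish $\dim S_{\BB/\AA}\le d$: this is already noted in the introduction (for each $x\in Q$, localizing at $\RR_x$ kills $S_{\BB/\AA}$, so it is annihilated by a power of $Q$, forcing $\dim S_{\BB/\AA}\le \dim\RR/Q + (\text{fiber dimension}) \le d$ — one has to be a little careful since $\gr(\BB)$-dimension can exceed $\dim\RR$; the correct bound is $\dim S_{\BB/\AA}\le \dim \RR = d$ because $S_{\BB/\AA}$ is a module over $\gr_Q(\RR)\otimes\RR/\text{something}$ but annihilated by a power of $Q\TT$ and by a power of $Q$, hence its support lies over $V(Q)\subset\Spec\RR$ with bounded fiber). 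Next, assuming $S_{\BB/\AA}\neq 0$, I would show $\dim S_{\BB/\AA}=d$ and the $S_1$ property simultaneously, mimicking the last paragraph of the proof of Proposition~\ref{SallyJci}: from the sequence $0\rar I\AA\rar \BB_{+}[+1]\rar S_{\BB/\AA}\rar 0$, with $I\AA$ maximal Cohen--Macaulay and $\BB_{+}$ torsion-free ($S_1$), the long exact sequence in local cohomology (with respect to the maximal homogeneous ideal of $\AA$) gives $H^0_{\mathfrak{M}}(S_{\BB/\AA})=0$ and $H^1_{\mathfrak{M}}(S_{\BB/\AA})\hookleftarrow$ a cokernel controlled by $\BB_{+}$; the $S_1$ conclusion follows because any associated prime of $S_{\BB/\AA}$ would have to be an associated prime of $\BB_{+}[+1]/I\AA$, and since $\BB_+$ has no embedded primes and $I\AA$ is unmixed of codimension one, $S_{\BB/\AA}$ has no associated primes of codimension $\ge 2$ in its support — hence its dimension equals $\dim\AA - 1 - r + \dim(\RR/Q)$-type count, which after localizing at each $\pp_k\in\min(Q)$ and using that $(\RR_{\pp_k}, I_{\pp_k}, Q_{\pp_k})$ is the $\m$-primary situation of Proposition~\ref{SallyJci}, gives $\dim S_{\BB/\AA}=d$ by the additivity formula~(\ref{adddeg}) applied to the Hilbert function of $S_{\BB/\AA}$.

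The cleanest route is probably: localize~(\ref{defnSally}) at a minimal prime $\pp_k$ of $Q$. Then $\RR_{\pp_k}$ is Cohen--Macaulay of dimension $r$, $I_{\pp_k}$ is $\pp_k\RR_{\pp_k}$-primary (since $I$ is unmixed with $\sqrt{I}=\sqrt{Q}$ locally), $Q_{\pp_k}$ is a minimal reduction generated by a system of parameters, so Proposition~\ref{SallyJci} applies verbatim: $S_Q(I)_{\pp_k}$ is either $0$ or has dimension $r$ with $\pp_k\RR_{\pp_k}[Q\TT]$ as its only associated prime. Globally this says $S_{\BB/\AA}$ is supported, in codimension $r$, exactly on those $\pp_k$ where the local Sally module is nonzero, with multiplicity there equal to $\lambda((S_{\BB/\AA})_{\pp_k})$; by~(\ref{adddeg})-type reasoning the Hilbert polynomial of $S_{\BB/\AA}$ has degree $d-1 = (r-1) + (d-r)$ whenever some local piece survives, giving $\dim S_{\BB/\AA}=d$. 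The $S_1$ property then follows globally because it can be checked after localization at each prime in the support: the embedded-prime-free conclusion from Proposition~\ref{SallyJci}(2) at each $\pp_k$, combined with the torsion-freeness of $\BB_+$ over $\AA$ away from $V(Q)$ (where $S_{\BB/\AA}$ vanishes anyway), leaves no room for an embedded prime. The main obstacle I anticipate is making the dimension bookkeeping rigorous — specifically, verifying that $I\AA$ is genuinely maximal Cohen--Macaulay over the (non-catenary-free but Cohen--Macaulay) ring $\AA$ when $I$ is merely unmixed rather than $\m$-primary, and confirming that unmixedness of $I$ plus the complete-intersection reduction $Q$ really does force $\sqrt{I_{\pp_k}}=\pp_k\RR_{\pp_k}$ so that the local reduction to Proposition~\ref{SallyJci} is legitimate; once that is pinned down, the rest is the cohomological pivoting already deployed in Proposition~\ref{SallyJci}.
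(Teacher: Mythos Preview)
Your structural setup is right --- the defining sequence $0\rar I\AA\rar \BB_{+}[+1]\rar S_{\BB/\AA}\rar 0$ together with depth control on $I\AA$ is exactly what drives the argument --- but the obstacle you flag at the end is real, and your proposed resolution of it does not go through. The module $I\AA$ is \emph{not} maximal Cohen--Macaulay in general: from $0\rar I\AA\rar\AA\rar\AA/I\AA\rar 0$ one gets $\depth I\AA\ge\min(d+1,\depth(\RR/I)+r+1)$, and when $I$ is merely unmixed (not Cohen--Macaulay) the right side can fall well short of $d+1$. So the local-cohomology pivoting from Proposition~\ref{SallyJci}, which relied on $I\AA$ being MCM, breaks down here.

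The paper's fix is to notice that one does not need MCM; one only needs $S_2$. Since $Q$ is a complete intersection, $\AA/I\AA=\RR[Qt]\otimes\RR/I$ is a polynomial ring over $\RR/I$; because $I$ is unmixed, $\RR/I$ has $S_1$, hence so does $\AA/I\AA$, and then the sequence $0\rar I\AA\rar\AA\rar\AA/I\AA\rar 0$ with $\AA$ Cohen--Macaulay forces $I\AA$ to satisfy $S_2$. Now for any associated prime $P$ of $S_{\BB/\AA}$ with $\codim P\ge 2$, apply $\Hom_\AA(\AA/P,-)$ to the defining sequence: $\Hom(\AA/P,I\AA)=0$ and $\Ext^1_\AA(\AA/P,I\AA)=0$ by $S_2$, while $\Hom(\AA/P,\BB_{+}[+1])=0$ by torsion-freeness of $\BB_{+}$; this contradicts $\Hom(\AA/P,S_{\BB/\AA})\neq 0$. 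Thus every associated prime of $S_{\BB/\AA}$ has codimension exactly $1$ (it is at least $1$ since a power of $Q$ annihilates), giving both $\dim S_{\BB/\AA}=d$ and $S_1$ at once. This is where the hypothesis ``$I$ unmixed'' enters, and it enters only through $S_1$ of $\RR/I$ --- a point your proposal never isolates.

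Your ``cleanest route'' via localization at the minimal primes $\pp_k$ of $Q$ does recover the extended Hilbert coefficient $E_0(S_{\BB/\AA})$ and hence the dimension statement, but it does not by itself yield the global $S_1$ property: knowing that $(S_{\BB/\AA})_{\pp_k}$ has $S_1$ over $\AA_{\pp_k}$ for each minimal $\pp_k$ says nothing about possible embedded primes of $S_{\BB/\AA}$ lying over primes of $\RR$ strictly containing some $\pp_k$. The Hom--Ext argument above handles all primes of $\AA$ uniformly and is the missing ingredient.
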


\begin{proof}
 Start by noticing that $\RR[Qt] \otimes \RR/I= \RR[Qt]\otimes \RR/Q\otimes \RR/I$ is a polynomial ring over
$\RR/I$, and therefore has the condition $S_1$ since $I$ is unmixed. This means that $I\RR[Qt]$ is a divisorial
ideal of the Cohen-Macaulay ring $\AA=\RR[Qt]$. 
If $S_{\BB/\AA}\neq 0$ and $P\subset \AA$ is one of its associated primes of codimension at least $2$, the exact 
sequence
\[ 0\rar \Hom(\AA/P, I\AA)= 0 \lar \Hom(\AA/P, I\BB) = 0 \lar \Hom(\AA/P, S_{\BB/\AA}) \lar \Ext_{\AA}^1(\AA/P, I\AA)\]
leads to  a contradiction since the last module vanishes as $I\AA$ has the condition $S_2$ of Serre.
\end{proof}

\subsection{Ideals of positive dimension}

Let $I$ be of dimension one and examine how $S_{\BB/\AA}$ is Cohen-Macaulay. [Set $J= Q$] In the exact sequence
\[ 0 \rar I\RR[QT] \lar I \RR[IT] \lar S_Q(I) \rar 0,\]
$S_Q(I)\neq 0$ is Cohen-Macaulay means that $\depth I \RR[IT] \geq d$, which from the basic sequences mean that
$\depth \RR[IT] \geq d$ also [recall $I\RR[QT]$ is MCM $\RR[QT]$-module.  A maximal sop for $S_Q(I)$ is
$(QT, a)$, $a $ a sop for $\RR/Q$. Reduction mod $QT$ gives rise to the exact sequence 
\[ 0 \rar I\AA\otimes_{\AA} \AA/(QT) \lar I \RR[IT] \otimes_{\AA} \AA
/(QT) \lar S_Q(I) \otimes_{\AA}\AA /(QT) \rar 0.\]
The last module is
\[ F_Q(I)= \bigoplus_{n=1}^r I^{n+1}/QI^n, \quad r = \red_Q(I)\] must be Cohen-Macaulay over $\RR/Q$.
In particular

\begin{Proposition} $\red_Q(I) \leq \deg F_Q(I)$.
\end{Proposition}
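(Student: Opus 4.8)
The plan is to exploit the short exact sequence obtained by reducing $S_Q(I)$ modulo a maximal system of parameters, exactly as displayed in the paragraph preceding the statement. Since $Q$ is equimultiple of codimension $d-1$ (as $I$ has dimension one and $Q$ is a reduction of $I$, so $Q$ is equimultiple), the element $QT$ together with a system of parameters $a$ for $\RR/Q$ forms a maximal system of parameters for $S_Q(I)$; here I am using that $S_Q(I)$ is annihilated by a power of $(QT)$ together with $\dim S_Q(I) = d$ in the nonzero Cohen--Macaulay case. The first step is therefore to set up the quotient $\overline{S} = S_Q(I) \otimes_{\AA} \AA/(QT, a)$ and identify it: modding out by $QT$ produces the Sally fiber $F_Q(I) = \bigoplus_{n=1}^{r} I^{n+1}/QI^n$ over $\RR/Q$ (with $r = \red_Q(I)$), and then modding out by $a$ gives a module of finite length over $\RR/(Q,a)$ which is a local Artinian ring.

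Next I would record the standard fact that for a Cohen--Macaulay module $M$ of dimension $e$ over a Noetherian local (graded) ring, with $\underline{x}$ a maximal system of parameters, one has $\rme_0(M) = \lambda(M/\underline{x}M)$, and moreover the top-degree piece that survives is controlled by the degree of the $h$-polynomial. Applied to $S_Q(I)$, whose Hilbert series as an $\AA$-module has the shape $h(t)/(1-t)^d$ with $\deg h(t) = \red_Q(I)$ by Proposition~\ref{SallyJci}(7) (valid since $S_Q(I)$ is Cohen--Macaulay here), reducing modulo the maximal system of parameters $(QT, a)$ leaves a graded module whose top nonzero degree is $\red_Q(I)$. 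Equivalently, $F_Q(I)$ — being Cohen--Macaulay over $\RR/Q$ by the displayed exact sequence — has the property that after killing a system of parameters $a$ for $\RR/Q$, the resulting graded vector space is nonzero in degree $\red_Q(I)$. Hence $\deg F_Q(I) \geq \red_Q(I)$, where $\deg F_Q(I)$ denotes the top degree of the (Cohen--Macaulay, hence nonzero in that degree) module, or equivalently the degree of its $h$-polynomial.

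The key computation is thus to chase the grading through the two reductions and confirm that no degree collapse occurs: the element $QT$ has degree one, so modding by it shifts and truncates correctly, and $a$ sits in degree zero (it is a parameter for $\RR/Q$, acting $\RR$-linearly without changing the grading), so killing it does not shorten the graded support. This is where I expect the main obstacle to lie: one must be careful that $a$ genuinely acts as a nonzerodivisor on $F_Q(I)$ (which is exactly the Cohen--Macaulayness of $F_Q(I)$ over $\RR/Q$, already established), and that the graded piece in degree $\red_Q(I)$ of $F_Q(I)$ — namely $I^{r}/QI^{r-1}$ with $r=\red_Q(I)$, which is nonzero precisely because $r$ is the reduction number — remains nonzero after the parameter reduction. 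A clean way to see the latter is to note that $I^{r}/QI^{r-1}$ is a nonzero module over the Artinian local ring $\RR/Q$'s residue setup, and since $F_Q(I)$ is a faithful Cohen--Macaulay $\RR/Q$-module in that top degree one cannot annihilate it by a single parameter; alternatively, invoke Nakayama after reducing mod the maximal ideal, which is the refinement already noted in item (5) of the discussion of the Sally fiber. Assembling these observations gives $\red_Q(I) \leq \deg F_Q(I)$.
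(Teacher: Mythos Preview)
Your argument rests on reading $\deg F_Q(I)$ as the top graded degree of the module (equivalently, the degree of its $h$-polynomial). That is not the meaning in this paper: throughout, $\deg(\cdot)$ denotes the \emph{multiplicity} of a module --- see the extended Hilbert function discussion, the additivity formula $\deg(\RR/I_n)=\sum_k\deg(\RR/\pp_k)\lambda((\RR/I_n)_{\pp_k})$, and above all the Corollary immediately following the Proposition, which asserts $\deg S_Q(I)=\deg F_Q(I)$. Since $S_Q(I)$ is nonzero in every large degree, ``top degree'' of $S_Q(I)$ makes no sense; the Corollary only parses if $\deg$ is multiplicity. So the statement to be proved is that $\red_Q(I)$ is bounded by the multiplicity of the one-dimensional Cohen--Macaulay $\RR/Q$-module $F_Q(I)$.

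The paper's argument, flagged by the words ``In particular'', is a one-line count. As an $\RR/Q$-module, $F_Q(I)=\bigoplus_{n\geq 1}I^{n+1}/QI^n$ is Cohen--Macaulay of dimension one, so each graded summand $I^{n+1}/QI^n$ is a torsionfree $\RR/Q$-module (this is exactly the ``curious fact'' recorded right after the Corollary). Hence every nonzero summand contributes at least $1$ to $\deg F_Q(I)$, and the summand at index $n$ is nonzero precisely when $n<\red_Q(I)$. Summing yields the bound. None of the apparatus you invoke --- Castelnuovo regularity, Proposition~\ref{SallyJci}(7), degree of the $h$-polynomial --- enters.

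There is also an indexing slip that conceals the problem with your reading. With the paper's grading one has $[F_Q(I)]_n=I^{n+1}/QI^n$, so the last nonzero piece $I^{r}/QI^{r-1}$ (with $r=\red_Q(I)$) sits in degree $r-1$, not degree $r$. Carried through correctly, your interpretation would give $\deg F_Q(I)=r-1$ and hence $\red_Q(I)\leq r-1$, which is false; it is only the off-by-one that makes the argument appear to close.
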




\begin{Corollary} If $S_Q(I) $ is Cohen-Macaulay then $\deg S_Q(I) = \deg F_Q(I)$. 
\end{Corollary}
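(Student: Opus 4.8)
The plan is as follows. If $S_Q(I)=0$ there is nothing to prove, so assume $S_Q(I)\neq 0$; then (e.g. by Proposition~\ref{nonzeroSally}, since $I$ is unmixed of dimension one and $Q$ is a complete intersection) $\dim S_Q(I)=d$ and $S_Q(I)$ has the $S_1$ property over $\AA=\RR[Q\TT]$. Because $\RR$ is Cohen--Macaulay and $Q$ is equimultiple, $Q$ is generated by a regular sequence $a_1,\dots,a_{d-1}$, and, as recorded just before the statement, the $d-1$ homogeneous degree-one elements $a_1\TT,\dots,a_{d-1}\TT$ of $\AA$ together with a parameter $a$ for the one-dimensional ring $\RR/Q$ form a system of parameters for $S_Q(I)$. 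The goal is to show that $\deg$ is unchanged on passing from $S_Q(I)$ to $F_Q(I)=S_Q(I)/(Q\TT)S_Q(I)=\bigoplus_{n\geq 1} I^{n+1}/QI^n$.

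The core step I would invoke is the standard principle that killing a homogeneous module-regular element of degree one multiplies the Hilbert series by $(1-t)$, leaving the $h$-polynomial — and hence the multiplicity — unchanged. When $S_Q(I)$ is Cohen--Macaulay, the system of parameters $a_1\TT,\dots,a_{d-1}\TT,a$ is an $S_Q(I)$-regular sequence; in particular $a_1\TT,\dots,a_{d-1}\TT$ is a regular sequence of degree-one elements, which is exactly the exactness of the sequence
\[ 0 \rar I\AA \otimes_\AA \AA/(Q\TT) \lar I\RR[IT] \otimes_\AA \AA/(Q\TT) \lar S_Q(I) \otimes_\AA \AA/(Q\TT) \rar 0 \]
displayed above. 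Iterating the principle $d-1$ times reduces the computation of $\deg S_Q(I)$ to that of $\deg F_Q(I)$.

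To make this precise one must reconcile the meaning of $\deg$ on the two sides: since $I$ has dimension one, the graded components $I^{n+1}/IQ^n$ of $S_Q(I)$ are not of finite length, and $\deg$ is the extended multiplicity assembled, via the additivity formula (\ref{adddeg}), from the minimal primes $\p_1,\dots,\p_s$ of $Q$. The natural route is to localize at each $\p=\p_k$: there $\RR_\p$ is Cohen--Macaulay, $I_\p$ is $\p\RR_\p$-primary, $Q_\p$ is a minimal reduction of $I_\p$ generated by a regular sequence, and $S_Q(I)_\p=S_{Q_\p}(I_\p)$ is Cohen--Macaulay. The classical $\m$-primary computation underlying the Huckaba test (Theorem~\ref{Huctest}) then gives $\deg S_{Q_\p}(I_\p)=\lambda\bigl(F_Q(I)_\p\bigr)$ at each $\p_k$; multiplying by $\deg(\RR/\p_k)$ and summing over $k$ via (\ref{adddeg}) yields $\deg S_Q(I)=\deg F_Q(I)$.

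The hard part will be the bookkeeping in this last step: one must check that the extended Hilbert polynomial of $S_Q(I)$ is genuinely the sum of the local (classical) ones, with no embedded contribution supported at $\m$ perturbing the top coefficient, and that each localization $S_{Q_\p}(I_\p)$ indeed inherits Cohen--Macaulayness, so that the classical identity applies at every $\p_k$. Granting these points, the remainder is the routine behavior of Hilbert series under quotient by a degree-one regular element.
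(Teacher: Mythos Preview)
Your argument is correct. The paper does not write out a proof for this corollary; the intended argument, set up in the lines immediately preceding it, is exactly your first route: $(Q\TT,a)$ is a system of parameters for $S_Q(I)$, hence a regular sequence when $S_Q(I)$ is Cohen--Macaulay, and passing to $F_Q(I)=S_Q(I)/(Q\TT)S_Q(I)$ by killing the degree-one elements $Q\TT$ preserves the multiplicity.

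Your additional localization argument at the minimal primes $\p_k$ of $Q$---reducing to the classical $\m$-primary equality $e_0(S_{Q_\p}(I_\p))=\lambda(F_{Q_\p}(I_\p))$ via the Huckaba/Serre identity and then summing with weights $\deg(\RR/\p_k)$ through the additivity formula (\ref{adddeg})---is a legitimate and more explicit way to handle the extended-degree bookkeeping. It goes beyond what the paper demands, but the caution is reasonable in this non-primary setting, and the two points you flag as needing care (that each localization $S_{Q_\p}(I_\p)$ inherits Cohen--Macaulayness, and that there is no embedded contribution at $\m$ to perturb the top coefficient) do indeed go through: the first because $Q\TT$ remains a regular sequence after localizing in $\RR$, the second because $S_Q(I)$ has the $S_1$ property and its associated primes all lie over the $\p_k$.
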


This gives a curious fact [probably trivial!]

\begin{Proposition} For all $n$, $I^n/QI^{n-1}$ is a torsion free $\RR/Q$-module.
\end{Proposition}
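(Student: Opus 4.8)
The plan is to reduce the statement to the Cohen--Macaulayness of the Sally fiber $F_Q(I)$ that was just extracted from the Cohen--Macaulayness of $S_Q(I)$, using that $\RR/Q$ is a one-dimensional Cohen--Macaulay local ring. First I would record the base change: since $I$ has dimension one its minimal reduction $Q$ is generated by $d-1$ elements, and as $\RR$ is Cohen--Macaulay these form a regular sequence, so $\RR/Q$ is Cohen--Macaulay of dimension one. Over such a ring a finitely generated module $M$ is torsion free --- meaning no nonzerodivisor of $\RR/Q$ annihilates a nonzero element --- exactly when $\depth_{\RR/Q}M\geq 1$, equivalently when $M$ is maximal Cohen--Macaulay over $\RR/Q$; this is the reformulation I will use throughout.

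Then I would separate the case $n=1$, where $I^n/QI^{n-1}=I/Q$ is a submodule of $\RR/Q$ and hence torsion free because $\RR/Q$ is torsion free over itself and torsion freeness passes to submodules, from the case $n\geq 2$. For the latter I would invoke the discussion above: under the standing assumption that $S_Q(I)$ is Cohen--Macaulay, the generators of $Q\TT$ together with a parameter $a$ for $\RR/Q$ form a maximal system of parameters for $S_Q(I)$, hence a regular sequence on it, and reduction modulo $Q\TT$ exhibits the Sally fiber
\[ F_Q(I)=\bigoplus_{m=1}^{r} I^{m+1}/QI^{m},\qquad r=\red_Q(I),\]
as a one-dimensional Cohen--Macaulay $\RR/Q$-module, so $\depth_{\RR/Q}F_Q(I)=1$. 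Since the depth of a finite direct sum is the minimum of the depths of its nonzero summands, every nonzero $I^{m+1}/QI^{m}$ has depth at least one over $\RR/Q$ and is therefore torsion free, while the summands with $m\geq r$ vanish; taking $m=n-1$ finishes the argument.

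I do not expect a real obstacle here: once $F_Q(I)$ is known to be a Cohen--Macaulay $\RR/Q$-module the conclusion is a bookkeeping exercise with depths of direct sums, which is why the statement deserves the label \emph{curious} rather than \emph{hard}. The only points demanding some care are checking that $(Q\TT,a)$ is genuinely a maximal system of parameters for $S_Q(I)$ --- so that its quotient really is $F_Q(I)$ and is Cohen--Macaulay of the expected dimension one --- and noting that a vanishing summand is vacuously torsion free, so that the assertion holds uniformly in $n$.
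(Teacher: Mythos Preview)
Your argument is correct and is precisely the proof the paper has in mind: the phrase ``This gives a curious fact [probably trivial!]'' refers back to the fact just established that $F_Q(I)=\bigoplus_{n\geq 1} I^{n+1}/QI^{n}$ is Cohen--Macaulay over the one-dimensional Cohen--Macaulay ring $\RR/Q$, from which torsion-freeness of each summand (and the separate treatment of $I/Q\subset \RR/Q$) follows exactly as you wrote. The paper does not supply a formal proof beyond that remark, so your write-up in fact fills in the details the author left implicit.
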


Let us examine the possible extension to the case of an ideal Cohen-Macaulay ideal $J$ of codimension two and deviation one, generically a complete intersection. This will imply that $\AA=\RR[Jt]$ is
Cohen-Macaulay. 

\medskip

 The extended Hilbert series
of $S_{\BB/\AA}$,
$ \sum_{n\geq 2} \deg(I^n/IJ^{n-1})\ttt^n$, can be related to that of $\gr(\BB)$ by comparison of the exact sequences
\[ 0 \rar I^n/IJ^{n-1} \lar \RR/IJ^{n-1} \lar \RR/I^n \rar 0,\]
and 
 \[ 0 \rar J^{n-1}/IJ^{n-1} = J^{n-1}/J^n \otimes \RR/I \lar \RR/IJ^{n-1} \lar \RR/J^{n-1} \rar 0\]
of  modules of dimension $d-r$. 

\medskip

These sequences give rise to the  equalities
\begin{eqnarray*}
\deg(I^n/IJ^{n-1}) & = & \deg (\RR/IJ^{n-1}) - \deg(\RR/I^n) \\
\deg(\RR/IJ^{n-1}) & = & \deg(J^{n-1}/IJ^{n-1}) + \deg(\RR/J^{n-1}).
\end{eqnarray*} 
They give for $n\geq 2$,
\begin{eqnarray*}
\deg(I^n/IJ^{n-1}) &=& \deg(J^{n-1}/IJ^{n-1}) + \deg(\RR/J^{n-1})-\deg(\RR/I^n)\\
&=& \deg(\RR/I) {{n+r-2}\choose{r-1}} + \deg(\RR/J){{n+r-2}\choose{r}} \\
&-&
 [E_0(\gr(\BB)) {{n+r-1}\choose {r}}- E_1(\gr(\BB)){{n+r-1} \choose{r-1}} + \textrm{lower terms}].\end{eqnarray*}
In particular
\begin{eqnarray*}
E_0(\gr(\BB)) & = & \deg(\RR/J) \\
E_1(\gr(\BB)) & \geq & 0.
\end{eqnarray*}

\bigskip

 We can also define an extended Hilbert function for the Sally module $S_{\BB/\AA}$
 and the corresponding Hilbert polynomial
\[\deg(I^n/IJ^{n-1})=
 E_0(S_{\BB/\AA}) {{n+r-1}\choose {r-1}}- E_1(S_{\BB/\AA}){{n+r-2} \choose{r-2}} + \textrm{lower terms}.\]
 The coefficient $E_0(S_{\BB/\AA})\geq 0$ but may vanish, that is this Hilbert polynomial could be of degree $<r$.
 
\begin{Proposition}\label{Sallydimnonzero} 
Let $J$ be an unmixed ideal generated by a $d$-sequence with a Cohen-Macaulay Rees algebra. Then

\begin{enumerate}

\item 
$ E_0(S_{\BB/\AA}) = E_1(\gr(\BB)) - \deg (I/J)$.

\item $E_0(S_{\BB/\AA}) = 0$ if and only if $\dim S_{\BB/\AA}< d$.

\item If $\dim S_{\BB/\AA} = d$, $E_0(S_{\BB/\AA}) = \deg S_{\BB/\AA}$.


\end{enumerate}

\end{Proposition}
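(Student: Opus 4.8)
The plan is to mimic the structure of the proof of Proposition~\ref{Sallydsequence} and its $d$-sequence refinement, but working with the extended degree function $\deg(\cdot)$ in place of ordinary length. First I would establish (1) directly from the displayed computation preceding the statement. Indeed, matching the top-degree coefficients in
\[ \deg(I^n/IJ^{n-1}) = \deg(J^{n-1}/IJ^{n-1}) + \deg(\RR/J^{n-1}) - \deg(\RR/I^n) \]
and using the polynomial expansions already recorded — namely $\deg(\RR/J^{n-1})$ contributes $\deg(\RR/J)\binom{n+r-2}{r}$ plus lower terms, $\deg(\RR/I^n)$ contributes $E_0(\gr(\BB))\binom{n+r-1}{r} - E_1(\gr(\BB))\binom{n+r-1}{r-1}$ plus lower terms, and $\deg(J^{n-1}/IJ^{n-1}) = \deg(\RR/J\otimes\gr_J(\RR))$ contributes $\deg(J^{n-1}/J^n\otimes\RR/I)$ — I would read off that the coefficient of $\binom{n+r-1}{r-1}$ in $\deg(I^n/IJ^{n-1})$ equals $E_1(\gr(\BB)) - \deg(I/J)$, using $E_0(\gr(\BB)) = \deg(\RR/J)$ (which is the first equality in the displayed ``In particular'' block) so that the $\binom{n+r-1}{r}$ terms cancel. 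Here the hypothesis that $J$ is generated by a $d$-sequence with Cohen--Macaulay Rees algebra is what makes $\deg(J^{n-1}/J^n\otimes\RR/I)$ equal to $\deg(\RR/I)\binom{n+r-2}{r-1}$ exactly (the associated graded ring of $J$ being a polynomial ring over $\RR/J$ locally at the minimal primes of $J$, via the acyclicity of the approximation complex as in the proof of the Corollary following Remark~\ref{s0gen}), so there is no stray $\binom{n+r-1}{r-1}$-contribution from that term.

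Next, for (2) and (3) I would invoke Proposition~\ref{nonzeroSally}: since $J$ is a complete intersection locally at its minimal primes (being generated by a $d$-sequence in an unmixed ring) and $I$ is unmixed, $S_{\BB/\AA}$ is either zero or has dimension $d$ and satisfies Serre's $S_1$. Hence the extended Hilbert polynomial of $S_{\BB/\AA}$ has degree exactly $r-1$ when $S_{\BB/\AA}\neq 0$, because localizing at each minimal prime $\pp_k$ of $J$ the module $(S_{\BB/\AA})_{\pp_k}$ is a nonzero Sally module over the $r$-dimensional Cohen--Macaulay local ring $\RR_{\pp_k}$ and therefore has dimension $r$ by Proposition~\ref{SallyJci}(3); the additivity formula (\ref{adddeg}) then shows $E_0(S_{\BB/\AA}) = \sum_k \deg(\RR/\pp_k)\lambda((S_{\BB/\AA})_{\pp_k,\p_k}) > 0$. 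Conversely if $\dim S_{\BB/\AA} < d$ then each localization vanishes and $E_0(S_{\BB/\AA}) = 0$. This gives (2), and (3) is then immediate since when $\dim S_{\BB/\AA} = d$ the module is $S_1$ with $\m\AA$ (intersected appropriately) as the unique associated prime over each $\pp_k$, so its multiplicity is computed coefficient-wise exactly as $E_0(S_{\BB/\AA})$, matching the leading term of its extended Hilbert polynomial.

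The main obstacle I expect is justifying that the extended degree function behaves polynomially and additively in precisely the way the length function does in the $\m$-primary case — that is, confirming that $\deg(J^{n-1}/J^n\otimes\RR/I)$ really is $\deg(\RR/I)\binom{n+r-2}{r-1}$ with no lower-order surprises and no positive-codimension contributions that would shift the $E_1$ comparison. This rests on the $d$-sequence hypothesis guaranteeing, via the approximation complex argument, that $\gr_J(\RR)_{\pp_k} \cong (\RR/J)_{\pp_k}[\TT_1,\dots,\TT_r]$ for each minimal prime $\pp_k$ of $J$ (since $J_{\pp_k}$ is a complete intersection of the right codimension), so that $\lambda((J^{n-1}/J^n)_{\pp_k}\otimes \RR/I) = \lambda((\RR/I)_{\pp_k})\binom{n+r-2}{r-1}$ exactly; summing with weights $\deg(\RR/\pp_k)$ gives the clean formula. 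Once that is in hand, the rest is a bookkeeping match of binomial coefficients identical to the Cohen--Macaulay $\m$-primary computation, and the $S_1$/dimension dichotomy from Proposition~\ref{nonzeroSally} supplies (2) and (3) without further work.
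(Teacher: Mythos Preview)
Your approach is essentially the paper's own: both take (1) from the displayed binomial bookkeeping immediately preceding the statement, and both handle (2) by writing
\[
E_0(S_{\BB/\AA}) \;=\; E_1(\gr(\BB)) - \deg(I/J) \;=\; \sum_k \bigl[\rme_1(\gr(\BB_{\pp_k})) - \lambda(I_{\pp_k}/J_{\pp_k})\bigr]\deg(\RR/\pp_k) \;=\; \sum_k s_0\bigl(S_{\BB_{\pp_k}/\AA_{\pp_k}}\bigr)\deg(\RR/\pp_k),
\]
then invoking Proposition~\ref{SallyJci} at each minimal prime $\pp_k$ (where $\RR_{\pp_k}$ is Cohen--Macaulay of dimension $r$) to see that every summand is nonnegative and vanishes exactly when the local Sally module does. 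The paper is terser --- it records the sum and stops --- whereas you add the justification via the approximation complex for why the $d$-sequence hypothesis makes $\gr_J(\RR)_{\pp_k}$ polynomial over $(\RR/J)_{\pp_k}$; that extra detail is correct and useful.

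Two small cautions. First, your appeal to Proposition~\ref{nonzeroSally} for the global dichotomy ``$S_{\BB/\AA}=0$ or $\dim S_{\BB/\AA}=d$'' does not literally apply, since that proposition assumes $J$ is a complete intersection while here $J$ is only a $d$-sequence with Cohen--Macaulay Rees algebra. This is harmless because neither you nor the paper actually needs the global dichotomy: the equivalence in (2) comes directly from the local sum above. Second, your phrase that $(S_{\BB/\AA})_{\pp_k}$ is nonzero for \emph{each} $k$ whenever $S_{\BB/\AA}\neq 0$ overshoots; all you need (and all the sum delivers) is that \emph{some} localization is nonzero precisely when $E_0>0$, which is the statement that $\dim S_{\BB/\AA}=d$.
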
 

\begin{proof} To treat the vanishing of $E_0(S_{\BB/\AA})$, 
from the expression for the Hilbert polynomial of the filtration $I^n$,
\begin{eqnarray*} E_0(S_{\BB/\AA})=
E_1(\gr(\BB)) - \deg(I/J) &=& \sum_k[e_1(\gr(B_{\pp_k})) - \lambda(I_{\pp_k}/J_{\pp_k})]\deg(\RR/\pp_k)\\
&=& \sum_k \deg S_{\BB_{\pp_k}/\AA_{\pp_k}}\deg(\RR/\pp_k).
\end{eqnarray*}
Therefore $E_0(S_{\BB/\AA})=0$ if and only if the Sally modules at the localizations $\RR_{\pp_k}$ all vanish according to Proposition~\ref{SallyJci}. 
\end{proof}


 Let us  interpret these coefficients.

\begin{Proposition} $E_0(S_{\BB/\AA})= \deg S_{\BB/\AA}$.
\end{Proposition}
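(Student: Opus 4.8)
The statement asserts that the leading extended Hilbert coefficient $E_0(S_{\BB/\AA})$ equals the (extended) degree $\deg S_{\BB/\AA}$. By Proposition~\ref{Sallydimnonzero}(3) this is already known when $\dim S_{\BB/\AA}=d$, and it is trivially true (both sides zero) when $S_{\BB/\AA}=0$, so the real content is to rule out the intermediate possibility: that $S_{\BB/\AA}\neq 0$ yet $\dim S_{\BB/\AA}<d$, i.e.\ that the extended Hilbert polynomial has degree $<r$ while $E_0(S_{\BB/\AA})$ is forced to be $0$ but $\deg S_{\BB/\AA}$ (computed in the correct lower dimension) is positive. So the plan is: show that under the hypotheses of Proposition~\ref{Sallydimnonzero} the Sally module $S_{\BB/\AA}$ is either $0$ or has dimension exactly $d$, and then quote part (3).

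\textbf{Key steps.} First I would localize at each minimal prime $\pp_k$ of $J$ (equivalently of $Q$), exactly as in the proof of Proposition~\ref{Sallydimnonzero}: since $J$ is generated by a $d$-sequence and has a Cohen--Macaulay Rees algebra, each $\RR_{\pp_k}$ is Cohen--Macaulay of dimension $r$ and $J_{\pp_k}$ is a parameter ideal, so Proposition~\ref{SallyJci} applies to the localized data $(\RR_{\pp_k}, Q_{\pp_k}, I_{\pp_k})$. By Proposition~\ref{SallyJci}(3), for each $k$ the local Sally module $S_{\BB_{\pp_k}/\AA_{\pp_k}}$ is either $0$ or has dimension $r$; hence by the additivity formula (\ref{adddeg}) applied to the Sally filtration, $\deg S_{\BB/\AA}=\sum_k \deg(\RR/\pp_k)\,\lambda\big((S_{\BB/\AA})_{\pp_k}\big)$, and this sum is the length-weighted count that also computes $E_0(S_{\BB/\AA})$. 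The point is that the extended Hilbert polynomial of $S_{\BB/\AA}$ has degree $r-1$ as soon as some localization is nonzero, because each nonzero local summand contributes a genuine degree-$(r-1)$ term with positive leading coefficient and there is no cancellation (all contributions are nonnegative). Thus $E_0(S_{\BB/\AA})=0$ forces every localization to vanish, which — again invoking that $\m\RR_{\pp_k}[Q_{\pp_k}\TT]$ is the unique associated prime when the local Sally module is nonzero (Proposition~\ref{SallyJci}(2)), so the module cannot be supported only in lower dimension — forces $S_{\BB/\AA}=0$; and if $E_0(S_{\BB/\AA})\neq 0$ then $\dim S_{\BB/\AA}=d$ and part (3) gives equality. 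In either case $E_0(S_{\BB/\AA})=\deg S_{\BB/\AA}$.

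\textbf{Main obstacle.} The delicate point is matching the \emph{extended} Hilbert polynomial of $S_{\BB/\AA}$ (built from the multiplicity function $\deg(\RR/I^n)$ and its companions via (\ref{adddeg})) with the \emph{honest} degree of $S_{\BB/\AA}$ as a graded module over $\gr_Q(\RR)$, and checking that ``dimension $<d$'' in the sense of the polynomial degree cannot coexist with ``$S_{\BB/\AA}\neq 0$''. This is precisely where the $S_1$ property established in Proposition~\ref{nonzeroSally} (for $Q$ a complete intersection, $I$ unmixed) — or its $d$-sequence analogue used in Proposition~\ref{Sallydimnonzero} — does the work: torsion-freeness over $\AA/\mathrm{ann}$ prevents embedded or lower-dimensional associated primes, so the module has pure dimension equal to $\dim\AA/\p$ for its unique associated prime $\p$, which localizes to $\m\RR_{\pp_k}[Q_{\pp_k}\TT]$ at each relevant $\pp_k$, of codimension $r$ there and hence of dimension $d$ globally. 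Making this unmixedness argument airtight — in particular verifying that ``Cohen--Macaulay Rees algebra of a $d$-sequence ideal'' transfers the needed $S_1$/unmixedness to $I\RR[QT]$ and thence to $S_{\BB/\AA}$ — is the step I expect to require the most care.
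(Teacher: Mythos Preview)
Your core computation in the ``Key steps'' paragraph is the right one and matches the paper's argument: both $E_0(S_{\BB/\AA})$ and $\deg S_{\BB/\AA}$ decompose as the same sum $\sum_k \deg(\RR/\pp_k)\cdot s_0\big(S_{\BB_{\pp_k}/\AA_{\pp_k}}\big)$ over the minimal primes $\pp_k$ of $J$. The paper's proof is literally one sentence: it cites the additivity formula (\ref{adddeg}) for the left side and Nagata's general associativity formula for multiplicities \cite[Theorem 24.7]{Nagata} for the right side, and that is all.

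What you add in the ``Proof plan'' and ``Main obstacle'' sections --- the case analysis on $\dim S_{\BB/\AA}$, the appeal to $S_1$ and unmixedness to rule out an intermediate dimension --- is a detour the paper does not take. Once both sides are written as the same weighted sum of local Sally multiplicities, the equality holds on the nose with no dimension hypothesis needed; the vanishing case $E_0=0$ is absorbed automatically (each local $s_0$ is nonnegative, so the sum is zero iff every summand is). You are in effect re-deriving Proposition~\ref{Sallydimnonzero}(2) as a lemma toward (3), when the associativity formula delivers the conclusion directly.

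One imprecision to fix: you write $\lambda\big((S_{\BB/\AA})_{\pp_k}\big)$, but the localized Sally module is not of finite length --- it is the local Sally module of the $\pp_k$-primary ideal $I_{\pp_k}$, whose relevant invariant is its multiplicity $s_0$, not its length. What you want is to apply (\ref{adddeg}) to each graded piece $I^{n+1}/IJ^n$, obtaining $\deg(I^{n+1}/IJ^n)=\sum_k \deg(\RR/\pp_k)\,\lambda\big((I^{n+1}/IJ^n)_{\pp_k}\big)$, and then read off the leading coefficient in $n$ on both sides.
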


\begin{proof} This follows from (\ref{adddeg}) and the general  associativity formula for multiplicities (\cite[Theorem 24.7]{Nagata}). \end{proof}

\subsection{Cohen-Macaulayness}

This allows to state the Cohen--Macaulay test for $S_{\BB/\AA}$ as follows:

\begin{Theorem}[CM test for Sally modules] Suppose $J$ is a complete intersection.
Let $\aa=\{a_1, \ldots, a_{d-r}\}$ be a multiplicity set of parameters
for $\RR/J$, that is $\deg \RR/J = \lambda(\RR/(J, \aa))$. If $I_1$ is a Cohen--Macaulay ideal and  $S_{\BB/\AA}\neq 0$ then
\[ E_1(\gr(\BB)) \leq  \sum_{j\geq 1} \lambda(I_j/((\aa) I_j + JI_{j-1})),\]
with equality if and only if $S_{\BB/\AA}$ is Cohen--Macaulay.
\end{Theorem}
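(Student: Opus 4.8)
The plan is to reduce the statement to the $\m$-primary, length-theoretic Huckaba test of Theorem~\ref{Huctest} by passing to the quotient by a generic multiplicity system of parameters $\aa$ for $\RR/J$. First I would observe that, since $J$ is a complete intersection and $I_1$ is Cohen--Macaulay with $S_{\BB/\AA}\neq 0$, Proposition~\ref{nonzeroSally} gives $\dim S_{\BB/\AA}=d$ and the $S_1$ property; moreover $S_{\BB/\AA}$ is annihilated by a power of $J\TT$ (as in the proof of Theorem~\ref{Huctest}), so the generators of $J\TT$ together with the lifts of $\aa$ to $\AA=\RR[J\TT]$ form a system of parameters of length $(d-r)+r=d$ for $S_{\BB/\AA}$. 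The key point is that cutting by $\aa$ turns the extended Hilbert function (multiplicities over $\RR/\pp_k$) into an ordinary length function: because $\aa$ is a multiplicity system for $\RR/J$, reduction modulo $(\aa)$ is, up to nilpotents supported off the $\pp_k$, exact on the relevant graded pieces, so that $\deg(I_j/JI_{j-1})$ becomes $\lambda\!\left(I_j/((\aa)I_j+JI_{j-1})\right)$ and the extended coefficient $E_1(\gr(\BB))$ becomes $\rme_1$ of the one-dimensional-reduced filtration.

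Next I would set up the actual computation of $\chi_1(\aa,\,J\TT;\,S_{\BB/\AA})$, the first partial Euler characteristic with respect to the full system of parameters. By the standard reduction $\chi_1$ of a module equals $\chi_1$ of the module cut by one more generic parameter (together with the multiplicity contribution), one shows
\[
\chi_1(\aa,\,J\TT;\,S_{\BB/\AA}) \;=\; \e_1(\gr(\BB)) \;-\; \sum_{j\geq 1}\lambda\!\left(I_j/((\aa)I_j+JI_{j-1})\right),
\]
where the right-hand side arises exactly as in the proof of Theorem~\ref{Huctest}: $\sum_j \lambda(I_j/((\aa)I_j+JI_{j-1})) = \lambda\!\left(S_{\BB/\AA}\otimes \AA/(\aa,J\TT)\right)$ is the "naive" length, while $\e_1(\gr(\BB))$ (in the extended sense, which collapses to an ordinary $\rme_1$ after cutting by $\aa$) is the true multiplicity $\e_0$ of $S_{\BB/\AA}$ against this system of parameters — this identification of the true multiplicity with $E_1(\gr(\BB))$ is the content of the preceding computation of the extended Hilbert series (the equalities $E_0(\gr(\BB))=\deg(\RR/J)$, $E_0(S_{\BB/\AA})=E_1(\gr(\BB))-\deg(I/J)$, and the Sally-module Hilbert polynomial comparison). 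Since $S_{\BB/\AA}$ has positive depth ($S_1$, and its only associated prime in the $\m$-primary localizations is $\m\RR[J\TT]$ by Proposition~\ref{SallyJci}), we always have $\chi_1\geq 0$, giving the asserted inequality; and by Serre's theorem on Euler characteristics (\cite[Theorem 4.7.10]{BH}), $\chi_1=0$ if and only if $\aa,J\TT$ is a regular sequence on $S_{\BB/\AA}$, i.e.\ if and only if $S_{\BB/\AA}$ is Cohen--Macaulay.

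The main obstacle is the first step: justifying cleanly that reduction modulo the multiplicity system $\aa$ converts the extended (localization-weighted) Hilbert functions into the displayed ordinary lengths, and — crucially — that it does so compatibly on all three terms of the defining sequence \eqref{defnSally} so that $E_1(\gr(\BB))$ is genuinely the multiplicity $\e_0(S_{\BB/\AA})$ relative to the parameter system $(\aa,J\TT)$ rather than merely an upper or lower bound. The delicate point is that $(\aa)I_j + JI_{j-1}$ need not behave additively in $j$, and $\aa$ being a multiplicity system for $\RR/J$ only controls things up to modules of dimension $<d-r$; one must check that those lower-dimensional discrepancies cancel in the alternating sum that computes $\e_0$, which is exactly where the Cohen--Macaulayness of $I_1$ (hence of $\RR/I_1[\TT_1,\dots,\TT_r]$, as used in Proposition~\ref{SallyJci}) enters to guarantee the relevant graded pieces are themselves Cohen--Macaulay $\RR/J$-modules on which $\aa$ is a genuine regular-enough sequence. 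Once that bookkeeping is in place, the rest is the verbatim Euler-characteristic argument of Theorem~\ref{Huctest}.
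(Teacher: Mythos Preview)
Your overall architecture---use $(\aa, J\TT)$ as a system of parameters for $S_{\BB/\AA}$ and invoke Serre's theorem on Euler characteristics---is exactly the paper's approach, and the long detour through ``reduce modulo $\aa$ to the $\m$-primary case'' is unnecessary: one works directly with the $\AA$-module $S_{\BB/\AA}$ and its sop, and the extended multiplicity $E_0(S_{\BB/\AA})=\deg S_{\BB/\AA}$ comes from the associativity formula already established.

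There is, however, a genuine bookkeeping gap. You assert that
\[
\sum_{j\geq 1}\lambda\bigl(I_j/((\aa)I_j+JI_{j-1})\bigr)=\lambda\bigl(S_{\BB/\AA}\otimes \AA/(\aa,J\TT)\bigr)
\]
and that $E_1(\gr(\BB))$ is the multiplicity $\e_0(S_{\BB/\AA})$. Both identifications are off by the same quantity $\deg(I_1/J)$, and the proof only works because these offsets cancel---a cancellation you never make explicit. The Sally module is $\bigoplus_{j\geq 1} I_{j+1}/I_1 J^{j}$, so its fiber $S_{\BB/\AA}\otimes \AA/(\aa,J\TT)$ contributes only the terms $j\geq 2$ of the displayed sum; the $j=1$ term $\lambda(I_1/((\aa)I_1+J))$ stands apart. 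On the other side, the formula you yourself quote, $E_0(S_{\BB/\AA})=E_1(\gr(\BB))-\deg(I/J)$, says precisely that $E_1(\gr(\BB))$ is \emph{not} $\e_0(S_{\BB/\AA})$ but exceeds it by $\deg(I/J)$. The paper makes the cancellation work by observing that $I_1/J$ is a Cohen--Macaulay $\RR/J$-module (this is exactly where the hypothesis ``$I_1$ is a Cohen--Macaulay ideal'' is used), so that $\lambda(I_1/((\aa)I_1+J))=\deg(I_1/J)$; then Serre's inequality $\e_0(S_{\BB/\AA})\leq \lambda(S_{\BB/\AA}\otimes \AA/(\aa,J\TT))$, with equality iff $S_{\BB/\AA}$ is Cohen--Macaulay, becomes the stated inequality after adding $\deg(I_1/J)$ to both sides. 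Your third paragraph worries about the wrong obstacle; the real one is this offset accounting, and it dissolves once you separate out the $j=1$ summand.
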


It is important to keep in mind that $Jt, \aa$ is a system of parameters for both $\gr(\BB)$ and $S_{\BB/\AA}$.
To explain this formulation, just note that the right-hand side is the sum of the multiplicity of
the Cohen--Macaulay $\RR/J$--module $I_1/J$ (recall that $I_1$ is a Cohen--Macaulay ideal) plus
$\lambda(S_{\BB/\AA} \otimes \AA/(\aa, Jt))$. One then invokes  Serre's Theorem.

\bigskip

To make it effective requires information about $E_1(\gr(\BB))$, which is not an easy task. We will make
 observations about the following topics:

\begin{Proposition} With $\BB$ as above, let   $\CC$ be its integral closure. Then
\begin{enumerate}
\item[{\rm (1)}] $\BB$ satisfies the condition $R_1$ of Serre if and only if
\begin{eqnarray*}
E_1(\gr(\BB)) &=& E_1(\gr(\CC)).
\end{eqnarray*}
\item[{\rm (2)}]  If $J$ is $\m$--primary, these conditions are equivalent to
\begin{eqnarray*}
\deg {F}(\BB) = \deg {F} (\CC).
\end{eqnarray*}

\end{enumerate}
\end{Proposition}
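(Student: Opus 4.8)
\textit{Step 1: reduction to $\BB=\CC$.} The plan is to deduce both parts from a single statement about the finite birational extension $\BB\subseteq\CC$. Since $\BB$ is Cohen--Macaulay it satisfies $S_2$, while $\CC$ is normal and, under the standing finiteness hypotheses on $\RR$, module--finite over $\BB$ with the same total ring of fractions. By Serre's normality criterion (see \cite{BH}), $\BB$ is normal if and only if it satisfies $R_1$, and, being a subring of its own integral closure, $\BB$ is normal exactly when $\BB=\CC$; hence ``$\BB$ satisfies $R_1$'' is equivalent to ``$\BB=\CC$''. In particular ``$R_1\Rightarrow E_1(\gr(\BB))=E_1(\gr(\CC))$'' and, once $J$ is $\m$-primary, ``$R_1\Rightarrow \deg F(\BB)=\deg F(\CC)$'' are trivial, so only the converses require work.

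\textit{Step 2: the converse in (1).} Write $\CC_n\supseteq I_n$ for the degree--$n$ component of $\CC$. Applying (\ref{adddeg}) and the associativity formula for multiplicities to the exact sequences
\[
0\rar \CC_n/I_n\rar \RR/I_n\rar \RR/\CC_n\rar 0
\]
expresses the extended Hilbert polynomials of $\gr(\BB)$, of $\gr(\CC)$ and of $\CC/\BB$ through localization at the height--$r$ minimal primes $\pp_1,\dots,\pp_s$ of $J$. Since $\CC_n$ and $I_n$ have the same integral closure, $\rme_0(\gr(\BB_{\pp_k}))=\rme_0(\gr(\CC_{\pp_k}))$ for each $k$, whence $E_0(\gr(\BB))=E_0(\gr(\CC))$; subtracting the two polynomials therefore gives
\[
\deg\bigl(\CC_n/I_n\bigr)=\bigl(E_1(\gr(\CC))-E_1(\gr(\BB))\bigr)\binom{n+r-1}{r-1}+\text{lower terms},\qquad n\gg0 .
\]
The left--hand side is a nonnegative function of $n$, so $E_1(\gr(\BB))\le E_1(\gr(\CC))$, and equality holds precisely when the extended Hilbert function of $\CC/\BB$ is of strictly lower order, i.e. when $\CC/\BB$ -- equivalently the conductor $\mathfrak c=\BB:_{\BB}\CC$ -- has codimension at least $2$ in $\BB$. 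Since $\RR$ is normal, this is exactly the assertion that $R_1$ holds for $\BB$ (see the last paragraph). Combined with Step 1 this is (1).

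\textit{Step 3: part (2).} When $J$ is $\m$-primary we have $r=d$, the extended multiplicities become ordinary lengths, and $F(\BB)=\bigoplus_n I_1^n/\m I_1^n$ and $F(\CC)=\bigoplus_n \CC_n/\m\CC_n$ are $d$--dimensional standard graded algebras over $k=\RR/\m$, with $F(\CC)$ module--finite over $F(\BB)$. Both the kernel and the cokernel of the natural map $F(\BB)\rar F(\CC)$ are supported on $V(\mathfrak c+\m\BB)$, so $\deg F(\BB)=\deg F(\CC)$ if and only if these have dimension $<d$, i.e. if and only if $\mathfrak c$ meets no $d$--dimensional minimal prime of $\m\BB$. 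As $\dim\BB=d+1$, those primes are exactly the height--one primes of $\BB$ lying over $\m$; together with the observation that every other height--one prime of $\BB$ lies over $(0)$ or over a height--one prime $\q$ of $\RR$ with $I_1\RR_\q=\RR_\q$ (along which $\BB$ is regular, $\RR$ being normal), this says precisely that $R_1$ holds for $\BB$. With Step 1 and part (1) this yields the triple equivalence.

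The step I expect to be the real obstacle is the identification, used in Steps 2 and 3, of ``$\mathfrak c$ has codimension $\ge 2$ over the $\pp_k$ (resp.\ over $\m$)'' with ``$\BB$ satisfies $R_1$'': the additivity formula (\ref{adddeg}) and the fibre cone only record $\CC/\BB$ over finitely many primes of $\RR$, so one must show that a height--one prime $P\subset\BB$ at which $\BB$ is not regular must lie over one of those primes. This is where the hypothesis that $I_1$ is equimultiple of height $r$ (and that $\RR$ is normal) is essential: a height--one $P$ with $P\cap\RR=(0)$ lies in the regular generic fibre $\operatorname{Frac}(\RR)[t]$, while one with $P\cap\RR$ a height--one prime $\q\not\supseteq I_1$ localizes to the polynomial ring $\RR_\q[t]$, which is regular. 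Making this case analysis fully rigorous -- and pinning down the precise normality hypotheses on $\RR$ under which the conductor over the remaining primes is genuinely detected by the extended $E_1$ and by $\deg F(\BB)$ -- is the crux of the argument.
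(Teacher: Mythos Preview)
The paper states this proposition without proof, so there is no argument to compare against; what follows is an assessment of your attempt on its own merits.

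Your overall strategy---identifying $R_1$ with the conductor $\mathfrak c=\BB:_{\BB}\CC$ having height at least $2$, and reading both $E_1$ and the fibre degrees through the additivity formula (\ref{adddeg})---is the right one, and the forward implications go through. Two points deserve attention.

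\textit{Step 1.} The assertion that $\BB$ is Cohen--Macaulay is not among the standing hypotheses (only $\RR$ is assumed Cohen--Macaulay and $J$ a complete intersection), so you cannot conclude that $R_1$ forces $\BB=\CC$. Fortunately you never use this: what your Steps 2 and 3 actually need, and what holds without any $S_2$ assumption, is the equivalence of $R_1$ with $\mbox{\rm height}\ \mathfrak c\geq 2$, which follows because at a height-one prime $P$ one has $\BB_P=\CC_P$ exactly when $\mathfrak c\not\subset P$, and $\CC_P$ is then a DVR.

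\textit{The substantive gaps} are the ones you flag yourself, plus one more. First, the equality $E_1(\gr(\BB))=E_1(\gr(\CC))$ is, via (\ref{adddeg}), equivalent to $R_1$ for each localization $\BB_{\pp_k}$; upgrading this to $R_1$ for $\BB$ requires that every height-one prime of $\BB$ where regularity fails lie over some $\pp_k$. Your case analysis for the remaining height-one primes is correct but genuinely needs $\RR$ to satisfy $R_1$ (so that $\BB_{\q}=\RR_{\q}[t]$ is regular for height-one $\q\not\supseteq I_1$); this hypothesis is absent from the paper's setup and should be made explicit. Second, in part~(2) your four-term sequence only yields $\deg F(\BB)-\deg F(\CC)=\deg_d K-\deg_d C$, so equality of degrees gives $\deg_d K=\deg_d C$, not that both vanish; you still need an inequality such as $\deg F(\BB)\leq \deg F(\CC)$, or a direct argument that $K$ has dimension $<d$, before concluding that $\mathfrak c$ avoids every height-one prime over $\m$.
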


\subsection{Ideals of positive dimension: non Cohen-Macaulay rings}
Let $(\RR,\m)$ be a 
Noetherian local ring of dimension $d>0$ that satisfies the condition $S_r$ of Serre, $r\geq 1$.
 For the study of Sally
modules of ideals $I$ with $\dim \RR/I > 0$ we need an understanding of the Rees algebras of some
of their reductions. Let us consider some special cases of equimultiple ideals.  We make use of the techniques of
 \cite[Theorems 6.1, 9.1 and 10.1]{HSV3II}).

\medskip
\begin{enumerate}

\item $Q = (x_1, \ldots, x_q)$ is a partial system of parameters and a $d$-sequence. 
 Since $Q$ is generated by a $d$-sequence, the approximation complex  
 (set \ $\SS=\RR[\TT_{1}, \ldots, \TT_{q}]$)
\[ 0\rar \H_q(Q)\otimes \SS[-d] \rar \cdots \rar \H_1(Q)\otimes \SS[-1] \rar  \H_0(Q)\otimes \SS\rar \gr_Q(\RR) \rar 0  \]
is acyclic (\cite[Theorem 5.6]{HSV3}). As
 $\RR$ has the condition $S_r$,  $\H_j(Q) = 0$ for $j >  q-r$, which gives the estimate
 \[ \depth \gr_Q(\RR) \geq q - (q-r) = r.\]   

\item By tensoring this complex by $\RR/I$, we get the exact 
sequence
\[  \H_1(Q) \otimes \RR/I \otimes \SS[-1] \stackrel{\phi}{\lar} \RR/I \otimes \SS
 \lar \overline{G}= \gr_Q(\RR) \otimes \RR/I  \rar 0.\]
 If  $I_1(\varphi)\subset I$,  the mapping $\phi$ is trivial so that $\overline{G}=\RR/I
 \otimes \SS$,
 which gives 
 \[ \depth \overline{G} \geq q.\]

\item Now we look at the approximation complex
\[ 0\rar Z_q(Q)\otimes \SS[-q] \rar \cdots \rar Z_1(Q)\otimes \SS[-1] \rar  Z_0(Q)\otimes \SS\rar \RR[Q\TT] \rar 0 , \] which is also acyclic. Here the $Z_i$ are the modules of cycles of the Koszul complex of $Q$. We have
\begin{eqnarray*}
Z_q & = & 0\\
\depth Z_{q-1} & \geq & r \\
   & \vdots & \\
 \depth Z_i & \geq &  r-i+1, \quad  i > q-r
 \\
 \depth Z_0 & \geq & r.
 \end{eqnarray*}  
 Depth chasing the exact sequence yields
\begin{eqnarray*}  
\depth \RR[Q\TT] &\geq & r+1,
\end{eqnarray*}
in all cases. This  means that $\RR[Q\TT]$ satisfies $S_{r+1}$. 
 
 \medskip
\item Suppose $Q$ is a minimal reduction of $I$ and consider the corresponding Sally module
\[ 0 \rar I \RR[Q\TT] \lar I \RR[I\TT] \lar S_Q(I) \rar 0.\]
To get information about $\depth I \RR[Q\TT]$, the exact sequence
\[ 0 \rar I\RR[Q\TT] \lar \RR[Q\TT] \lar \overline{G} \rar 0,\]
and  (1) and (2) gives $S_Q(I)$ has the condition $S_1$ or is zero.
\medskip

\item If $\RR[I\TT]$ has the condition $S_2$ then so does $S_Q(I)$.

\medskip

\item We need a version of Proposition~\ref{Sallydimnonzero} for ideals such as $Q$ [which may be mixed].

\medskip
\item
To approach a calculation of $s_0(S_Q(I))$, consider 
 the exact sequences
\[ 0 \rar I^{n+1}/IQ^{n} \lar \RR/IQ^{n} \lar \RR/I^{n+1} \rar 0,\]
and 
 \[ 0 \rar Q^{n}/IQ^{n} = \SS_n \otimes \RR/I \lar \RR/IQ^{n} \lar \RR/Q^{n} \rar 0\]
of  modules of dimension $\leq 1$. 
Applying the functor $\H_{\m}(\cdot)$ to these sequences, we get
\[ 0 \rar I^{n+1}/IQ^{n} \lar \H_{\m}^0( \RR/IQ^{n}) \lar \H_{\m}^0(
\RR/I^{n+1}) \rar 0,\]
and 
 \[ 0 \rar  
 \SS_n \otimes \H_{\m}^0(\RR/I) 
   \lar 
   \H_{\m}^0(\RR/IQ^{n}) \lar 
   \H_{\m}^0(\RR/Q^{n}) \lar  
   \SS_n \otimes \H_{\m}^1(\RR/I) .
    \]

\end{enumerate}

\subsection{$j$-multiplicities}
Let $(\RR, \m)$ be a Noetherian local ring, $\AA$ a standard graded $\RR$-algebra 
 and $M$ a finitely generated graded $\AA$-module. 
We can attach to $\H=\H_{\m}^{0}(M)$ a Hilbert function
\[ n \mapsto \lambda(\H_n),\]
which we call the $j$--{\em transform} of $M$.

\subsubsection*{Hilbert coefficients of Achilles-Manaresi polynomials}
The corresponding Hilbert series and Hilbert polynomial will be still written as
$P(M; \ttt)$ and $H(M; \ttt)$.
We use a different notation for the coefficients of these functions:

\[ H(M;\ttt) = \sum_{i = 0}^{r-1} (-1)^i j_i(M) {{\ttt+ r-i-1}\choose{r-i-1}}, \quad r=\ell(M). \]

 If $r=1$ this polynomial does not provide for $j_1(M)$, so we use instead
 the function
\[ n \mapsto \sum_{k\leq n} \lambda(\H_k).\]

The coefficients $j_i(M)$ are integers but unlike the usual case of an Artinian local $\RR$ it is
very hard to calculate being less directly related to $M$.
In addition, some general relationships that are known to exist between the standard coefficients
$\e_0, \e_1, \e_2$, for instance, are not known.

\medskip

We illustrate one of these issues with a series of  questions. Let $\RR$ be a Noetherian local ring and 
let $I=(x_1, \ldots, x_r)$, $r\leq d = \dim \RR$, be an ideal generated by a partial system of 
parameters. Let $G$ be the associated graded ring of $I$, $G= \gr_I(\RR)$. The module
$\H= \H_{\m}^0(G)$ has dimension $\leq r$.

\section{Ideals of Dimension One}

Our data is now the following kind:
\begin{itemize}
\item $\RR$ is Cohen--Macaulay of dimension $d\geq 1$
\item $I_1$ is a Cohen-Macaulay equimultiple ideal of dimension $1$, or
\item $J$ is an aci and $I$ is an ideal such that $I\Rees(J)$ is CM--HOW TO FIND SUCH?
\end{itemize}

\begin{itemize}
\item Setting up the Hilbert function in case of low reduction number

\item Role of the Sally fiber

\medskip

Here is a specific problem: Suppose $\RR = k[x,y,z]$ and $J = (a,b)$ is a homogeneous complete intersection. We want to find $I\subset \bar{J}$ such that $I\RR[J\TT]$ is Cohen-Macaulay. For
example, what if $I$ is the integral closure itself? 

\medskip

Of course, if $\RR[I\TT]$ is Cohen-Macaulay [which is the case if $J$ is monomial], then all
powers $I^n$ are Cohen--Macaulay, by standard factors: $\gr_I(\RR)$ is Cohen--Macaulay of dimension $3$ which the ideal $\m \gr_I(\RR)$ has height $1$ since the analytic spread
is $2$ and thus $\m$ is not an associated prime of any component $I^n/I^{n+1}$, then by induction get that $\m$ is not associated to any $\RR/I^n$.

\medskip

If $I$ is Cohen-Macaulay, get $I\RR[J\TT]$ CM and the Sally module behaves as in the dimension zero case [$\m$ primary ideal].

\medskip

The more interesting case is that of an aci $J$ with $I$ CM. In this case $\RR[J\TT]$
 is still CM, but not sure $I\RR[J\TT]$ is CM. Need examples.

\end{itemize}

\subsection{Generic complete intersections}
 Let $\RR$ be a Cohen-Macaulay local ring of dimension $d>1$ and $I$ a Cohen-Macaulay ideal
with $\dim \RR/I \geq 1$. We assume that $I$ is not equimultiple. 
One  special
target is the set of prime ideals of dimension one  of a regular local ring. 

\medskip

Let $I$ be an ideal that is a complete intersection on the punctured spectrum.
If $Q$ is a minimal reduction the Sally module $S_Q(I)$
has finite support and therefore has an ordinary Hilbert function. We do not know well how 
to express its Hilbert coefficients and use them to determine the properties of $S_Q(I)$.

\medskip

Let us assume $\dim \RR/I=1$, and
let $Q$ be a minimal reduction. We examine the Sally module $S_Q(I)$ under he assumption that $I$ is generically a complete intersection. In this case $Q$ is generated by a $d$-sequence and the approximation complex
\[ 0 \rar \H_1(Q) \otimes \SS[-1] \lar \H_0(Q) \otimes \SS \lar \gr_Q(\RR) \rar 0\]
is exact (see \cite[Theorem 9.1, 10.1]{HSV3II}). Furthermore, by \cite[Theorem 6.1]{HSV3II}, both
$\gr_Q(\RR)$ and $\RR[Q\TT]$ are Cohen--Macaulay.
Note that the Koszul homology module $\H_1(Q)$ is a Cohen-Macaulay module of dimension $1$ and $I\cdot  \H_1(Q )=0$ since
$I_{\p} = Q_{\p}$ at any of its minimal primes and $I_{\p}$ is generated by a regular sequence.
It follows that if we reduce this complex by $\RR/I$ we get the complex
\begin{eqnarray} \label{grI}
 0 \rar \H_1(Q) \otimes \SS[-1] \lar \RR/I \otimes \SS \lar \gr_Q(\RR)\otimes \RR/I \rar 0,
 \end{eqnarray}
that localization at the primes $\p$ shows it is acyclic. 

\begin{Proposition}\label{Sallygenci}
Under these conditions we have:
\begin{enumerate} 
\item
$\gr_Q(\RR)$ and 
$ \gr_Q(\RR)\otimes \RR/I$ are Cohen-Macaulay.
\item
If the Sally module is nonzero then $\dim S_Q(I) = d$ and has the condition $S_1$ of Serre.
\end{enumerate}
\end{Proposition}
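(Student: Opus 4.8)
The plan is to dispatch the two assertions in turn, reducing each to depth and dimension bookkeeping along the three short exact sequences already in play --- the acyclic complex (\ref{grI}), the sequence $0\to I\RR[Q\TT]\to\RR[Q\TT]\to\overline{G}\to 0$, and the defining sequence of $S_Q(I)$ --- in the spirit of the proof of Proposition~\ref{SallyJci}.

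\emph{Part (1).} That $\gr_Q(\RR)$ is Cohen--Macaulay was already recorded, from \cite[Theorem~6.1]{HSV3II}, so only $\overline{G}=\gr_Q(\RR)\otimes\RR/I$ needs argument. I would first check that both left-hand terms of (\ref{grI}) are Cohen--Macaulay of dimension $d+1$: the term $\RR/I\otimes\SS$ is $(\RR/I)[\TT_1,\dots,\TT_d]$ and $\dim\RR/I=1$; and, since $I\cdot\H_1(Q)=0$, the term $\H_1(Q)\otimes\SS$ is $\H_1(Q)\otimes_{\RR/I}(\RR/I)[\TT_1,\dots,\TT_d]$, Cohen--Macaulay of dimension $d+1$ because $\H_1(Q)$ is a Cohen--Macaulay $\RR/I$-module of dimension $1$. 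A depth chase along (\ref{grI}) then yields $\depth\overline{G}\geq d$. Since $\sqrt{Q}=\sqrt{I}$ we have $I^{N}\subseteq Q$ for some $N$, so $I\gr_Q(\RR)$ is a nilpotent ideal of $\gr_Q(\RR)$ and $\dim\overline{G}=\dim\gr_Q(\RR)=d$; hence $\overline{G}$ is Cohen--Macaulay.

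\emph{Part (2).} By part (1), in the sequence $0\to I\RR[Q\TT]\to\RR[Q\TT]\to\overline{G}\to 0$ the outer terms are Cohen--Macaulay of dimensions $d+1$ and $d$, so $I\RR[Q\TT]$ is a maximal Cohen--Macaulay $\RR[Q\TT]$-module. Also $I\RR[I\TT]=\BB_{+}$ is a submodule of $\RR[I\TT]\subseteq\RR[\TT]$, hence torsion-free over $\RR[Q\TT]$, i.e.\ it satisfies Serre's condition $S_1$. Feeding both facts into the defining sequence $0\to I\RR[Q\TT]\to I\RR[I\TT]\to S_Q(I)\to 0$ and chasing depths exactly as in Proposition~\ref{SallyJci} shows that $S_Q(I)$, if nonzero, has no embedded primes, i.e.\ satisfies $S_1$. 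It remains to pin down the dimension. Now $S_Q(I)$ is annihilated by a power of $Q$, and the generic-complete-intersection hypothesis makes it vanish at the minimal primes of $I$ (there $I_{\p}=Q_{\p}$), so its $\RR$-support reduces to $\{\m\}$ and its support over $\RR[Q\TT]$ lies in $V(\m\RR[Q\TT])=\Spec\bigl(\RR[Q\TT]\otimes\RR/\m\bigr)$. Since $\ell(Q)=\nu(Q)=d$, this fiber cone is a polynomial ring over $\RR/\m$, hence irreducible with generic point $\m\RR[Q\TT]$, of coheight $d$ in $\RR[Q\TT]$. Together with the absence of embedded primes this makes $\m\RR[Q\TT]$ the only associated prime of $S_Q(I)$, whence $\dim S_Q(I)=d$.

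The step I expect to be the genuine obstacle is the last one: showing that a nonzero $S_Q(I)$ actually reaches the generic point $\m\RR[Q\TT]$ of that fiber cone --- equivalently, that $\ann_{\RR[Q\TT]}S_Q(I)\subseteq\m\RR[Q\TT]$, so that no form $q\TT^n$ with $q$ a minimal generator of $Q^n$ annihilates $S_Q(I)$. The $S_1$ property alone does not give this; it is the exact analogue of the classical fact that a nonzero Sally module over a Cohen--Macaulay ring has maximal dimension. I would derive it from that circle of ideas --- using that $S_Q(I)$ is in fact an $\RR[I\TT]$-module generated in degree one, and reading off its leading (extended) Hilbert coefficient from the formulas developed above --- rather than re-proving it from scratch.
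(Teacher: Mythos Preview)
Your approach is the same as the paper's, and both parts are essentially right; the paper's proof of (2) is in fact terser than yours and does not address the dimension claim at all, recording only ``Since $I\RR[Q\TT]$ is a maximal Cohen--Macaulay module, $S_Q(I)=0$ or has the condition $S_1$ of Serre.'' So your self-identified obstacle is a real gap in the paper's write-up as well --- but it is filled by the very depth chase you already performed, provided you extract the full conclusion.

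The point is that the chase along $0\to I\AA\to I\BB\to S_Q(I)\to 0$ yields more than $S_1$: it bounds the height of every associated prime. If $P\in\Ass(S_Q(I))$, then localizing gives $\depth S_Q(I)_P=0$, so either $\depth(I\BB)_P=0$ or $\depth(I\AA)_P\leq 1$. In the first case $P\in\Ass(I\BB)$ and torsion-freeness forces $\mbox{ht}(P)=0$; in the second case, since $I\AA$ is maximal Cohen--Macaulay over the Cohen--Macaulay ring $\AA$, one has $\depth(I\AA)_P=\mbox{ht}(P)$, whence $\mbox{ht}(P)\leq 1$. Either way $\mbox{ht}(P)\leq 1$. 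Now combine this with what you already established: the $\RR$-support of $S_Q(I)$ is $\{\m\}$, so every associated prime contains $\m\AA$, which you correctly noted is a prime of height~$1$ (the fiber cone being a polynomial ring in $d$ variables). Hence every associated prime equals $\m\AA$, giving $\dim S_Q(I)=d$ and $S_1$ in one stroke. There is no need for the Hilbert-coefficient detour you propose at the end.
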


\begin{proof}
The first assertion follows from the sequence (\ref{grI}), as the other modules are Cohen-Macaulay
of dimension $d+1$.

\medskip 

Now consider the Sally module
\[ 0 \rar I \RR[Q\TT] \lar I \RR[I\TT] \lar S_Q(I) \rar 0.\]
Since $I \RR[Q \TT]$ is a maximal Cohen-Macaulay module, $S_Q(I) = 0$ or has the condition $S_1$ of Serre.
\end{proof}

\begin{Remark}{\rm With greater generality, suppose $\RR$ is a Gorenstein local ring and $I$
is a Cohen-Macaulay ideal of codimension $r$ and analytic deviation $1$, that is a minimal reduction $Q$
is generated  by $r+1$ elements. If $I$ is a complete intersection on the punctured spectrum the same
assertions about $S_Q(I)$ will hold.
}\end{Remark}

These Sally modules have properties akin to those of $\m$-primary ideals.

\begin{Corollary} Let $\RR$ and $I$ be as above. If $\dim S_{\AA}(\BB) = d$,  then
\begin{enumerate}


\item
 $\p =\m \AA$ is the only associated prime of $S_{\AA}(\BB)$.

\item The multiplicity of $S_{\AA}(\BB)$ satisfies 
\[ \rme_0(S_Q(I)) = \lambda( S_{Q}(I)_{\p}) 
 \leq \sum_{n=1}^{r-1} \lambda(I^{n+1}/QI^n),\]
with equality if it is  Cohen--Macaulay.
\end{enumerate}

\end{Corollary}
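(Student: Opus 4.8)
The plan is to derive both parts of the Corollary from the structural facts already established in Proposition~\ref{Sallygenci}, together with the multiplicity machinery developed in Sections~2 and~3.

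First I would establish part~(1). By Proposition~\ref{Sallygenci}(2), when $\dim S_Q(I)=d$ the module $S_Q(I)$ has the condition $S_1$ of Serre as an $\AA=\RR[Q\TT]$-module. The condition $S_1$ means $S_Q(I)$ has no embedded primes, so every associated prime $P$ satisfies $\dim \AA/P = d$. Now, as noted in the introduction, for each $x\in Q$ localizing at $\RR_x$ kills $S_Q(I)$, so $S_Q(I)$ is annihilated by a power of $Q$ (equivalently of $Q\TT$); hence every associated prime contains $Q\TT$. Since it is also a module over $\RR[Q\TT]$ that is a homomorphic image of $\RR$ in the relevant degrees, every associated prime contracts to an associated prime of the relevant components $I^{n+1}/QI^n$, which are annihilated by $I$ (since $I\cdot\H_1(Q)=0$ forces the relations to live mod $I$, or more directly since $I^{n+1}/QI^n$ is an $\RR/I$-module only generically—here one uses that away from $\m$ the Sally module vanishes as $I$ is a complete intersection on the punctured spectrum). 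Combining: $P\supseteq \m\AA + Q\TT = \m\AA$, and since $\dim\AA/\m\AA = d$ this forces $P=\m\AA$. This mirrors exactly the argument for Proposition~\ref{SallyJci}(2).

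Next, part~(2). Since $\p=\m\AA$ is the unique associated (hence unique minimal) prime and $\AA/\p$ has multiplicity $1$, the associativity formula for multiplicities gives $\rme_0(S_Q(I)) = \lambda(S_Q(I)_\p)\cdot \rme_0(\AA/\p) = \lambda(S_Q(I)_\p)$; this is the same identity recorded in Corollary~\ref{cor1SallyJci}'s neighbor (the first Corollary after Proposition~\ref{SallyJci}). For the inequality, the key point is that $Q\TT$ is a system of parameters for $S_Q(I)$ (it is a $d$-element ideal annihilating a power of the $d$-dimensional module), so by the standard comparison of multiplicity with length modulo a system of parameters, $\rme_0(S_Q(I)) \leq \lambda\bigl(S_Q(I)\otimes_\AA \AA/(Q\TT)\bigr) = \lambda(F_Q(I)) = \sum_{n\geq 1}^{r-1}\lambda(I^{n+1}/QI^n)$, the Sally fiber, with $r=\red_Q(I)$ and finiteness of the sum coming from $I^{r+1}=QI^r$. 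Equality in the length-vs-multiplicity bound holds exactly when the Euler characteristic $\chi_1(Q\TT;S_Q(I))$ vanishes, which by Serre's theorem on Euler characteristics is equivalent to $S_Q(I)$ being Cohen--Macaulay; this is precisely the content of the Huckaba-type test (Theorem~\ref{Huctest}) specialized to this setting, so I would simply invoke it.

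The main obstacle I anticipate is \emph{justifying that $Q\TT$ generates a genuine system of parameters for $S_Q(I)$ and that the Sally fiber $F_Q(I)$ is the right reduction}, in the non-$\m$-primary setting where $\RR/I$ is one-dimensional rather than Artinian: one must check that $S_Q(I)\otimes\AA/(Q\TT)$ has finite length (not merely finite dimension), which requires knowing that the components $I^{n+1}/QI^n$ have finite length. This follows because $S_Q(I)$ has finite support (it vanishes on the punctured spectrum, $I$ being a complete intersection there), but one should spell out that finite support plus finite generation in each degree plus the module being zero in high degrees—equivalently $\dim F_Q(I)=0$—gives the finite-length conclusion. Once that is in place the remaining steps are routine applications of the associativity formula and Serre's multiplicity theorem.
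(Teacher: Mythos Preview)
Your proposal is correct and follows essentially the same approach as the paper. The one organizational difference worth noting: the paper leads directly with the observation that $S_Q(I)_{\q}=0$ for every prime $\q\neq\m$ of $\RR$ (since $I$ is a complete intersection on the punctured spectrum), so $S_Q(I)$ is annihilated by a power of $\m$; combined with the $S_1$ condition this immediately gives $\p=\m\AA$ as the unique associated prime, and simultaneously shows that $(Q\TT)$ is a system of parameters with finite-length fiber. You have this same fact, but you relegate it to a parenthetical inside a longer argument about associated primes containing $Q\TT$ and then revisit it as your ``main obstacle'' at the end. Promoting the finite-support observation to the first line would streamline part~(1) and dissolve the obstacle you flag for part~(2) in one stroke.
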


\begin{proof} 
Since $S_Q(I)_{\q}=0$ for $\q \neq \m$, $S_Q(I)$ is annihilated
 by a power of $\m$. As it has the condition $S_1$, $\p$ is indeed its unique associated prime.
 As a consequence $(Q\TT)$ gives a system of  parameters for $S_Q(I)$. In particular
\[ \rme_0(S_Q(I)) 
 \leq \sum_{n=1}^{r-1} \lambda(I^{n+1}/QI^n),\]
 by Serre's formula on Euler characteristics. 
\end{proof}

Unfortunately neither the multiplicity nor the reduction number have estimates.

\medskip
{\bf Reduction number:} 
{\bf Stuff below  is not entirely well. Will return.}

\medskip

To approach a calculation of $s_0(S_Q(I))$, consider 
 the exact sequences
 \begin{eqnarray}
 0 \rar I^{n+1}/IQ^{n} \lar \RR/IQ^{n} \lar \RR/I^{n+1} \rar 0,\\
   0 \rar Q^{n}/IQ^{n} = \SS_n \otimes \RR/I \lar \RR/IQ^{n} \lar \RR/Q^{n} \rar 0
   \end{eqnarray}
of  modules of dimension $\leq 1$. 
Applying the functor $\H_{\m}(\cdot)$ to these sequences, we get
\begin{eqnarray}
 0 \rar 
I^{n+1}/IQ^{n} = \H_{\m}^0(I^{n+1}/IQ^{n} )
\lar \H_{\m}^0( \RR/IQ^{n}) \lar \H_{\m}^0(
\RR/I^{n+1}) \rar 0,\\
 0 \rar  
 \SS_n \otimes \H_{\m}^0(\RR/I) =0
   \lar 
   \H_{\m}^0(\RR/IQ^{n}) \lar 
   \H_{\m}^0(\RR/Q^{n}) \lar  
   \SS_n \otimes \H_{\m}^1(\RR/I) .
    \end{eqnarray}
\medskip

We need to know about the   growth of 
\begin{eqnarray*}
s(n) &= &\lambda(I^{n+1}/IQ^n)\\
a(n) &= & \lambda(\H_{\m}^0(\RR/Q^n))\\
b(n) &= &\lambda(\H_{\m}^0(\RR/Q^{n+1}))\\ 
c(n) & = & \lambda(\H_{\m}^0(\RR/I^{n+1}))\\
d(n) &= & \lambda(
\H_{\m}^0(\RR/
IQ^n))\\
e(n)  & = &\lambda(\H_{\m}^0(I^{n+1}/Q^{n+1}))\\  f(n) &= & \lambda(\H_{\m}^0(Q^{n}/Q^{n+1}))\\
g(n) &= & \lambda(\H_{\m}^0(Q^{n}/IQ^{n}))
\end{eqnarray*}
by making use of the approximation complexes. These are for $n\gg 0$
Hilbert   polynomials of 
degree $<d$ and their coefficients denoted in the usual manner. Let us see what numerical data can be extracted 
from them.

\medskip

\begin{enumerate}

\item
By local duality we have $\Hom(\H_{\m}^1(\RR/I), E) = \RR/I$, where $E$ is the injective envelope of $\RR/\m$.
\medskip

\item The first two sequences yield
\[ s_0 = d_0 - c_0 < d_0 \quad \mbox{\rm except if }\]
and 
\[ d_0 \leq a_0,
\]
and therefore
\[ s_0 \leq a_0.\]

\item  From the exact sequence
\[ 0 \rar \H_1(Q) \otimes \SS[-1] \lar \H_0(Q) \otimes \SS \lar \gr_Q(\RR) \rar 0,\]
we have
\[  \H_{\m}^0(\H_1(Q)) 
\otimes \SS[-1] =0 \rar \H_{\m}^0(\H_0(Q)) \otimes \SS = I/Q \otimes \SS
\rar \H_{\m}^0(
\gr_Q(\RR)) \rar  \H_{\m}^1(\H_1(Q))\otimes \SS[-1] .\]
From which we have
\[ f_0 \geq \lambda(I/Q).\]
 
 \medskip
\item Meanwhile from
\[ 0 \rar Q^n/Q^{n+1} \lar \RR/Q^{n+1} \lar \RR/Q^n \rar 0\]
we have
\[ a_0 +  f_0 \geq b_0.\]

\item Let us exploit the coarse inequality $s_0 \leq a_0$ in some special cases. Suppose $\RR$ is a two-dimensional 
Gorenstein local ring. In this case we have the $Z$-complex
\[ 0 \rar Z_1 \otimes \SS_{n-1} \stackrel{\varphi}{\lar} Z_0 \otimes \SS_n \lar Q^n \rar 0,\]
which dualizing gives
\[ 0 \rar (Q^n)^{*} \lar Z_0^{*} \otimes \SS_{n}^* \stackrel{\varphi^*}{\lar} Z_1^{*} \otimes \SS_{n-1}^* \lar \Ext^1(Q^n, \RR) = \Ext^2(\RR/Q^n, \RR) \rar 0.\]

Note how $\varphi$ acts: If $Q=(a,b)$ and $(r,s) \in Z_1$, that is $ra + sb=0$,  then $\varphi(r,s) = r\TT_1 + s\TT_2$, and thus \[\varphi^*(\TT_1^*)((r,s)h(\TT_1,\TT_2))=\TT_1^*(r\TT_1+s\TT_2) h(\TT_1,\TT_2)= r
h(\TT_1,\TT_2), \]
and similar actions. This shows that
\[ \coker \ \varphi = \RR/I_1(\varphi) \otimes \SS_{n-1}.\]

\item To sum up  \[ s_0 \leq a_0 = \lambda(\RR/I_1(\varphi)).\]

\medskip

\item {\bf This is incomplete:} Must use the sequence $0\rar B_1 \lar Z_1 \lar \H_1 \rar 0$, dualize to get
$ 0 \rar Z_1^* \lar B_1^*\lar \RR/I \rar 0$

\end{enumerate}



\medskip
\subsubsection*{Special cases}
 Suppose $\nu(I) = d+1$ and consider the sequence (see (\ref{cpv6}))
\begin{eqnarray*}\label{sequence3}
F(Q) \oplus F(Q)[-1] \stackrel{\overline{\varphi}}{\lar} {F}(I) \lar  S_Q(I)[-1]\otimes \RR/\m
 \rar 0.\end{eqnarray*}
 
If $\m S_Q(I)=0$, we examine the image of $\overline{\varphi}$ as in \cite[Proposition 3.1]{red}.
We want to argue that $\overline{\varphi}$ is injective. It suffices to show that its image $C$ in
$F(I)$ has rank $2$.
We begin by observing that  $F(Q)$ injects into  $F(I)$, so consider the exact sequence
\[ 0 \rar F(Q)  \lar C \lar D \rar 0,\]
where
\[ D= \bigoplus_{n\geq 1} (IQ^{n-1} + \m I^n)/(Q^n+ \m I^n).\] 

We
examine its Hilbert function,
$ \lambda((IQ^{n-1}  + \m I^n)/(Q^n + \m I^n) )$ for $n>>0$ to show that $\dim D = d$. First
note that by assumption $\m S_Q(I)=0$, that is $\m I^n \subset IQ^{n-1}$ for all $n\geq 2$. Of course if $I = (Q,a)$
it suffices to assume $n=2$.
We want to argue that $\m I^n = \m IQ^{n-1}$.
This is embedded in the proof of the following (\cite[Proposition 3.1]{red}):


\begin{Proposition}\label{redlaq2a}  Let $(\RR, \m )$  be a Noetherian local ring 
  with infinite residue field,  $I= (Q,a)$ is an ideal  and  $Q$ is one of its minimal reductions. If for some 
 integer $n$, $\lambda(I^n/QI^{n-1})=1$, then $\red(I) \leq n \nu(\m) -1$.
\end{Proposition}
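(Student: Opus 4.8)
The plan is to distill the hypothesis into a statement about powers of the extra generator $a$, propagate it to all higher degrees using the Sally fiber of $I$ relative to $Q$, and then close the estimate by a Cayley--Hamilton (determinantal) argument that consumes the $\nu(\m)$ generators of $\m$. I would first reduce: we may assume $I\neq\RR$, so $a\in\m$; and since $I=(Q,a)$ one has $I^{m}=QI^{m-1}+\RR a^{m}$ for every $m\ge 1$, so $I^{m}/QI^{m-1}$ is cyclic, generated by the residue $\overline{a^{m}}$ of $a^{m}$. Hence $\lambda(I^{n}/QI^{n-1})=1$ is precisely the assertion that $a^{n}\notin QI^{n-1}$ while $\m a^{n}\subseteq QI^{n-1}$ (if instead $\lambda(I^{n}/QI^{n-1})=0$ then $\red_{Q}(I)\le n-1$ and there is nothing to prove). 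Multiplying $\m a^{n}\subseteq QI^{n-1}$ by $a^{m-n}\in I^{m-n}$ gives $\m a^{m}\subseteq QI^{m-1}$, hence $\m I^{m}\subseteq QI^{m-1}$, for all $m\ge n$; so for such $m$ the cyclic module $I^{m}/QI^{m-1}$ is annihilated by $\m$ and is thus $0$ or $\cong\RR/\m$.

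Next I would invoke the natural action of $\RR[I\TT]$ on the Sally fiber $F_{Q}(I)=\bigoplus_{m\ge 0}I^{m+1}/QI^{m}$ described above: $a\TT$ carries the generator $\overline{a^{m+1}}$ of $I^{m+1}/QI^{m}$ onto the generator $\overline{a^{m+2}}$ of $I^{m+2}/QI^{m+1}$, so every map $I^{m+1}/QI^{m}\twoheadrightarrow I^{m+2}/QI^{m+1}$ is surjective. Therefore $\lambda(I^{m+1}/QI^{m})$ is non-increasing in $m$, the equality $I^{m+1}=QI^{m}$ persists once it holds, and $\red_{Q}(I)=1+\max\{m:\ I^{m+1}\neq QI^{m}\}$. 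In particular it suffices to prove $a^{N}\in QI^{N-1}$ for $N:=n\,\nu(\m)$; equivalently, that the (at most one-dimensional) top part of $F_{Q}(I)$, which by the preceding paragraph occupies an unbroken run of degrees starting at $n-1$, cannot survive past degree $N-2$.

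For this I would set up a determinantal relation tied to the generators $x_{1},\dots,x_{v}$ of $\m$, $v=\nu(\m)$. When $n\ge 2$ the identity $QI^{n-1}=I\cdot QI^{n-2}\subseteq\m\,QI^{n-2}$ lets one write $a^{n}x_{j}=\sum_{k}\psi_{jk}x_{k}$ with all $\psi_{jk}\in QI^{n-2}$; multiplying the matrix relation $\bigl(a^{n}\mathrm{Id}_{v}-(\psi_{jk})\bigr)(x_{1},\dots,x_{v})^{\!\top}=0$ by the adjugate yields $\Delta\cdot\m=0$, where $\Delta=\det\bigl(a^{n}\mathrm{Id}_{v}-(\psi_{jk})\bigr)=a^{nv}+c_{1}a^{n(v-1)}+\dots+c_{v}$ and $c_{i}\in(QI^{n-2})^{i}$. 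A degree count gives $c_{i}a^{n(v-i)}\in QI^{nv-i-1}$, so $a^{nv}\in\Delta\RR+QI^{nv-2}$, with $\Delta$ killed by $\m$. Fed back into the monotonicity and persistence of the second step---and with the degenerate case $n=1$ treated by hand, via $\m a\subseteq Q\Rightarrow a^{2}\in Q$ and then bootstrapping---this should give $a^{nv}\in QI^{nv-1}$, i.e.\ $\red_{Q}(I)\le n\,\nu(\m)-1$.

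The main obstacle is exactly this last matching of exponents. Both the determinant trick (which produces coefficients in $(QI^{n-2})^{i}$ rather than $(QI^{n-1})^{i}$) and the elementary bootstrap $a^{k}\in QI^{k-2}$ for $k>n$ lose one unit of the $I$-adic filtration, so passing from $QI^{nv-2}$ to $QI^{nv-1}$---and disposing of the torsion term $\Delta$---demands a genuinely sharper use of the monotonicity of the Sally fiber, and it is here, not in the routine determinant manipulation, that the factor $\nu(\m)$ is really consumed.
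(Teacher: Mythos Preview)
Your diagnosis of the obstacle is accurate, and the fix is precisely the step you are missing. You only extract the containment $\m a^{n}\subseteq QI^{n-1}$ (equivalently $\m I^{n}\subseteq QI^{n-1}$), and then feed $QI^{n-1}\subseteq \m\cdot QI^{n-2}$ into the determinant trick; this is why your coefficients $\psi_{jk}$ land in $QI^{n-2}$ and the exponents fall one short. The paper instead proves the \emph{equality}
\[
\m I^{n}=\m\,QI^{n-1},
\]
which lets one write $a^{n}x_{j}=\sum_{k}\psi_{jk}x_{k}$ with $\psi_{jk}\in QI^{n-1}$. With these sharper coefficients the determinant $\Delta=a^{nv}+c_{1}a^{n(v-1)}+\cdots+c_{v}$ has $c_{i}\in (QI^{n-1})^{i}$, and then $c_{i}a^{n(v-i)}\in Q^{i}I^{\,i(n-1)+n(v-i)}=Q^{i}I^{\,nv-i}\subseteq QI^{\,nv-1}$; the exponent count closes exactly and one obtains $(I^{n})^{v}=(QI^{n-1})(I^{n})^{v-1}$, i.e.\ $I^{nv}=QI^{nv-1}$.

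The equality $\m I^{n}=\m QI^{n-1}$ is obtained by a length computation that you did not attempt. One first disposes of the case $\nu(I^{n})<\binom{r+n}{n}$ (where $r=\nu(Q)$) via the Eakin--Sathaye theorem, which already gives $\red(I)\le n-1$. In the remaining case $\nu(I^{n})=\binom{r+n}{n}$; since $I^{n}=QI^{n-1}+(a^{n})$ forces $\nu(QI^{n-1})\ge \nu(I^{n})-1$ while the obvious monomial generators give the reverse inequality, one has $\nu(QI^{n-1})=\binom{r+n}{n}-1$. Then
\[
\lambda(\m I^{n}/\m QI^{n-1})=\lambda(I^{n}/QI^{n-1})+\nu(QI^{n-1})-\nu(I^{n})=1+\bigl(\tbinom{r+n}{n}-1\bigr)-\tbinom{r+n}{n}=0,
\]
which is the missing equality. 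Your monotonicity argument for the Sally fiber is correct and pleasant, but it is not what saves the exponent; the point is to upgrade the inclusion to an equality before running Cayley--Hamilton.
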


\begin{proof} Since $Q$ is  a minimal reduction of $I$, it is generated by a subset of the  minimal set of generators of $I$ so if $\nu(Q) = r$
the expected number of generators of $I^n$ is ${r+n}\choose{n}$. A lesser value for $\nu(I^n)$ would imply by
\cite[Theorem 1]{ES} that $I^n = JI^{n-1}$ for some minimal reduction $J$ of $I$, and therefore $\red(I) \leq n-1$.

\medskip
 Suppose that $\nu(I^{n}) ={{r+n}\choose{n}}$. Since $\lambda(I^n/QI^{n-1})=1$, $\m I^{n} \subset QI^{n-1}$. Moreover, we have
\[ \m QI^{n-1} \subset \m I^{n} \subset QI^{n-1} \subset I^{n}.\]
Note that
\[ \lambda( \m I^{n}/ \m Q I^{n-1} ) = \lambda(I^{n}/\m QI^{n-1}) - \lambda(I^{n}/\m I^{n}) = \lambda(I^{n}/QI^{n-1}) + \nu(QI^{n-1}) - \nu(I^{n})=0.\]
It follows that $\m I^n = \m QI^{n-1}$. From the Cayley-Hamilton theorem we have
\[ (I^n)^s = (QI^{n-1})(I^{n})^{s-1},\] where $s = \nu(\m)$.  In particular $I^{ns} = QI^{ns-1}$, as desired. 
\end{proof}

We return to the  calculation of the dimension of $D$.  
Since $\m I^n = \m Q^{n-1}I$,
  we have
  \begin{eqnarray*}
 \lambda((IQ^{n-1}  + \m I^n)/(Q^n + \m I^n) )& =& \lambda(Q^{n-1}/(Q^n + \m I^n)) - \lambda(Q^{n-1}/(IQ^{n-1} + \m I^n))\\
 &=& \lambda((Q^{n-1}/Q^n)\otimes \RR/(Q+\m I))
 - \lambda((Q^{n-1}/Q^n)\otimes \RR/ I) \\
 &=& \lambda(I/(Q+\m I)) {{n+d-2}\choose{d-1}},
 \end{eqnarray*}
  which shows that $D$ has dimension $d$ and multiplicity $\lambda(I/(Q+\m I))=1$. This concludes the proof that
  \[ f_0(I) = 2 + s_0(S_Q(I)).\]

  We are now ready for
  the main application of these calculations.
  
  \begin{Theorem} Let $(\RR, \m)$ be a Cohen-Macaulay local ring of  dimension $d$ and $I$ a Cohen-Macaulay ideal 
  of dimension  $1$. Suppose $\nu(I)=d+1$ and $I$ is generically a complete intersection. Let $Q$ be a minimal reduction  of $I$ and $S_Q(I)$ the associated Sally module. If $\m S_Q(I)=0$ then
  $F(I)$ is Cohen-Macaulay and $\depth S_Q(I)\geq d-1$. 
  \end{Theorem}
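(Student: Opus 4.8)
The plan is to read off both conclusions from the sequence obtained from (\ref{cpv6}) by tensoring with $\RR/\m$. We may assume $S_Q(I)\neq 0$, the case $S_Q(I)=0$ being immediate (then $\red_Q(I)\le 1$, so $F(I)$ is a quadratic hypersurface over its Noether normalization, and $\depth S_Q(I)\ge d-1$ is vacuous).

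First I would record the basic sequence. By the analysis preceding the theorem --- which is exactly where $\m S_Q(I)=0$ is used, through the Cayley--Hamilton-type estimate of Proposition~\ref{redlaq2a} --- the map $\overline{\varphi}$ is injective; since $\m S_Q(I)=0$ gives $S_Q(I)\otimes\RR/\m=S_Q(I)$, this produces an exact sequence of graded modules over $F(Q)=\RR[Q\TT]\otimes\RR/\m$:
\[ 0 \rar F(Q)\oplus F(Q)[-1] \lar F(I) \lar S_Q(I)[-1] \rar 0. \]
Here $F(Q)=k[\TT_1,\dots,\TT_d]$ is a polynomial ring ($Q$ being a minimal reduction, the residue field infinite), and $F(I)$ is a finite $F(Q)$-module generated over $F(Q)$ by the fiber-cone image $\TT_0$ of $a\TT$ (where $I=(Q,a)$), an element integral over $F(Q)$.

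Next I would prove $F(I)$ is Cohen--Macaulay by exhibiting it as a hypersurface. By Proposition~\ref{Sallygenci}, $S_Q(I)$ has dimension $d$ and satisfies $S_1$; since $F(Q)$ is a $d$-dimensional domain, this forces $\Ass_{F(Q)}S_Q(I)=\{(0)\}$, so $S_Q(I)$ is torsion-free over $F(Q)$, and then the sequence above (whose submodule is also torsion-free) shows $F(I)$ is torsion-free over $F(Q)$. Now the kernel $K$ of $F(Q)[T]\rar F(I)$, $T\mapsto\TT_0$, contains a monic polynomial by integrality; its extension to the PID $\mathrm{Frac}(F(Q))[T]$ is principal with monic generator $\mu$, and $\mu\in F(Q)[T]$ because $F(Q)$ is normal. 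Torsion-freeness of $F(I)$ forces $\mu(\TT_0)=0$ already in $F(I)$, and division by the monic $\mu$ gives $K=(\mu)$. Hence $F(I)\cong k[\TT_0,\dots,\TT_d]/(\mu)$, free over the regular ring $F(Q)$ of rank $\deg_{\TT_0}\mu=f_0(I)$, so Cohen--Macaulay of dimension $d$. Finally, applying the depth lemma to the displayed sequence with $\depth(F(Q)\oplus F(Q)[-1])=d$ and $\depth F(I)=d$ gives $\depth S_Q(I)=\depth S_Q(I)[-1]\ge\min\{d-1,d\}=d-1$.

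I expect two points to require care. The first is the left-exactness of the tensored sequence (injectivity of $\overline{\varphi}$, equivalently $\rank_{F(Q)}(\mathrm{im}\,\overline{\varphi})=2$): this is the step that genuinely needs $\m S_Q(I)=0$, and it is the one settled in the run-up to the statement. The second is the hypersurface identification of $F(I)$, where the indispensable input is torsion-freeness of $F(I)$ over the Noether normalization $F(Q)$, supplied precisely by the $S_1$ property of $S_Q(I)$; without it one could still bound $\red_Q(I)$ via Theorem~\ref{CHTheorem}, but not conclude Cohen--Macaulayness of $F(I)$.
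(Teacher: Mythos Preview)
Your proof is correct and follows the same route as the paper: use the injectivity of $\overline{\varphi}$ (established just before the statement from $\m S_Q(I)=0$) to obtain the short exact sequence $0\to F(Q)\oplus F(Q)[-1]\to F(I)\to S_Q(I)[-1]\to 0$, observe that $S_Q(I)$ is torsion-free over $F(Q)$ via its $S_1$ property and dimension $d$, deduce that $F(I)$ is torsion-free over $F(Q)$, and conclude Cohen--Macaulayness. You are in fact more explicit than the paper on two counts: where the paper simply writes ``the Cohen--Macaulayness of $F(I)$ then follows as in Theorem~\ref{CHTheorem}(2)'', you spell out the hypersurface identification $F(I)\cong F(Q)[T]/(\mu)$ via the Gauss-lemma argument; and you actually carry out the depth chase giving $\depth S_Q(I)\ge d-1$, which the paper's proof does not record at all.
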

  
  \begin{proof} If the mapping $\varphi$ is injective, in the exact sequence
  \[ 0 \rar F(Q) \oplus F(Q)[-1] \lar F(I) \lar S_Q(I) \rar 0,\]
  $F(I)$ is torsionfree over $F(Q)$ since $S_Q(I)$ is also torsionfree over $F(Q)$. 
   The Cohen-Macaulayness of $F(I)$ 
  then follows as in Theorem~\ref{CHTheorem}(2).
\end{proof}

\begin{Remark}{\rm If $\nu(I) \geq d+2$, 

\medskip
\begin{itemize}
\item From $\m QI = \m I^2$, we still have $\red_Q(I) \leq 2 \cdot \nu(\m) -1$

\medskip

\item For $\nu(I) = d+2$ say, we have the complex
  \[  F(Q) \oplus F(Q)[-1]^2 \lar F(I) \lar S_Q(I) \rar 0,\]
but how to determine the kernel?
\end{itemize}

}\end{Remark}
 
\subsection{Dimensions two and three}  
  
We will begin by illustrating with elementary examples.

\begin{Example}{\rm 
Let $\RR = k[x,y,z]$ and $I  = (x^2 y, y^2 z, z^2 x, xzy)$. $I$  is Cohen-Macaulay and generically a complete intersection.
 $Q = (x^2 y - xyz, y^2 z, z^2 x-xyz)$ is a minimal reduction of $I$ and $\lambda(I^2/QI) =  1$.
Then $\red_Q(I) \leq 2$ (actually $\red_Q(I) = 2$).

}\end{Example}

\begin{Example}{\rm
Let $\RR$ be a regular local ring of dimension $3$ and $I$ a prime ideal of codimension two. Let $Q$ be a minimal 
reduction of $I$.
\medskip

{\bf What can be said?}

\medskip

\begin{itemize}

\item We may assume $\nu(Q) = 3$, as otherwise $I$ is a complete intersection.

\end{itemize}

}\end{Example}


\section{This and That}
We will report briefly on the structure of Sally modules of small rank.

\subsection{Multiplicity one} 
\cite{CPP98}, \cite{C09}, \cite{GNOmmj}, \cite{GNOmrl}, \cite{GO10}, \cite{VazPinto96}

\begin{Theorem}{\rm \cite[Theorem 1.2]{GNOmrl}}
Let $(\RR, \m)$ be a Cohen-Macaulay local ring of dimension $d$. If $I$ is a $\m$--primary ideal with a minimal
reduction $Q$
then the following three conditions are equivalent to each other.
\begin{enumerate}
\item[(1)] $\rme_{1} = \rme_{0} - \lambda(\RR/I) + 1$.
\item[(2)] $\m S = (0)$ and $\rank_{\mathcal{F}(Q)}(S)= 1$.
\item[(3)] $S \simeq (X_{1}, X_{2}, \ldots, X_{c}) \mathcal{F}(Q)$ as graded $\Rees(Q)$--modules for some $0 < c \leq d$, where $\{ X_{i} \}_{1 \leq i \leq c}$ are linearly independent linear forms of the polynomial ring $\mathcal{F}(Q)$.
\end{enumerate}
When this is the case, $c = \lambda(I^{2}/QI)$ and $I^{3} = QI^{2}$, and the following assertions hold true.
\begin{enumerate}
\item[{\rm (i)}] $\depth G \geq d-c$ and $\depth_{\tiny \Rees(Q)}S = d-c+1$.
\item[{\rm (ii)}] $\depth G = d-c$ if $c \geq 2$.
\item[{\rm (iii)}] Suppose $c <d $. Then
\[ \lambda(\RR/I^{n+1}) = \rme_{0}{{n+d}\choose{d}} - \rme_{1}{{n+d-1}\choose{d-1}} +  {{n+d-c-1}\choose{d-c-1}} \]
for all $n \geq 0$. Hence
\[ \rme_{i} = \left\{ \begin{array}{ll} 0 \quad &\mbox{\rm if} \;\; i \neq c+1 \\ (-1)^{c+1} \quad & \mbox{\rm if} \;\; i= c+1 \\ \end{array}  \right. \]
for $2 \leq i \leq d$.
\item[{\rm (iv)}] Suppose $c=d$. Then
\[ \lambda(\RR/I^{n+1}) =  \rme_{0}{{n+d}\choose{d}} - \rme_{1}{{n+d-1}\choose{d-1}} \]
for all $n \geq 1$. Hence $\rme_{i}=0$ for $2 \leq i \leq d$.
\end{enumerate}
\end{Theorem}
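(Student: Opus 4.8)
The plan is to run the analysis of the Sally module $S = S_Q(I)$ through the exact sequence
\[ 0 \rar I\Rees(Q) \lar I\Rees(I) \lar S \rar 0 \]
and its special-fiber reduction, exploiting the structural hypothesis in (2) that $\m S = 0$ and $\rank_{\mathcal{F}(Q)}(S) = 1$. The equivalence (1)$\iff$(2)$\iff$(3) should be established first: the implication (1)$\Rightarrow$(2) follows from the multiplicity formula $s_0(Q;I) = \rme_1(I) - \rme_0(I) + \lambda(\RR/I)$ (Corollary after Proposition~\ref{Sallydsequence}), which under (1) gives $s_0 = 1$, whence by the first Corollary after Proposition~\ref{SallyJci} we get $\m S = 0$; since $S$ has positive rank over $\mathcal{F}(Q) = \Rees(Q)\otimes\RR/\m$ exactly when $\dim S = d$, and $s_0 = 1$ forces that rank to be $1$. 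The converse (2)$\Rightarrow$(1) just reverses this. For (2)$\iff$(3): once $\m S = 0$, $S$ is a torsionfree (by Proposition~\ref{SallyJci}(2), $S$ has $S_1$ over $\Rees(Q)$) graded $\mathcal{F}(Q)$-module of rank $1$ generated in degree $1$ (degree-$1$ piece $I^2/QI$), hence isomorphic to a graded ideal of $\mathcal{F}(Q) = k[T_1,\dots,T_d]$ generated by $c = \lambda(I^2/QI)$ linear forms; torsionfreeness of rank $1$ forces that ideal to be generated by \emph{linearly independent} linear forms, and $c \le d$. After a linear change of coordinates these are $X_1,\dots,X_c$. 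That $I^3 = QI^2$ follows because the degree-$2$ piece of $(X_1,\dots,X_c)\mathcal{F}(Q)$ already generates everything in higher degrees, i.e. $\red_Q(I) \le 2$; alternatively invoke the general bound $\red_Q(I) \le s_0 + 1 = 2$ available here.

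For the consequences, I would compute the depth of $S = (X_1,\dots,X_c)k[T_1,\dots,T_d]$ directly: this ideal has a Koszul/Taylor resolution, and one checks $\depth_{k[T]} (X_1,\dots,X_c) = d - c + 1$ (it is a $c$-generated ideal generated by a regular sequence of linear forms, so it's the $(c-1)$-st syzygy of $k[T]/(X_1,\dots,X_c)$, which has depth $d-c$). Since $S$ is an $\mathcal{F}(Q)$-module, $\depth_{\Rees(Q)} S = \depth_{\mathcal{F}(Q)} S = d - c + 1$, giving the second half of (i). For the statement on $\depth G$ where $G = \gr_I(\RR)$, feed this into Proposition~\ref{SallyJci}(5)–(4): if $c \ge 2$ then $\depth S = d - c + 1 < d$, so $\depth G = \depth S - 1 = d - c$, which is (ii) and the $c \ge 2$ case of (i); if $c = 1$ then $S$ is a free $\mathcal{F}(Q)$-module of rank $1$, hence Cohen--Macaulay of dimension $d$, so by Proposition~\ref{SallyJci}(4) $\depth G \ge d - 1 = d - c$, completing (i).

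For (iii) and (iv), the point is to read off the Hilbert polynomial of $\RR/I^{n+1}$ from the Hilbert function of $S$ via the defining sequence (\ref{defnSally}) combined with $0 \to I^{n+1}/IQ^n \to \RR/IQ^n \to \RR/I^{n+1}\to 0$ and $\lambda(\RR/Q^n)$, $\lambda(Q^n/IQ^n)$ being known exactly (as in the proof of the first Corollary after Proposition~\ref{Sallydsequence}, using that $\RR$ is Cohen--Macaulay and $Q$ is a parameter ideal). Since $S \cong (X_1,\dots,X_c)\mathcal{F}(Q)$, its Hilbert series is $\bigl(1 - (1-t)^c\bigr)/(1-t)^d$, so $\lambda(I^{n+1}/IQ^n) = \binom{n+d-1}{d-1} - \binom{n+d-c-1}{d-1}$ for $n \ge 0$ (valid for all $n\ge 0$ since $S$ has no higher cohomology issues when $c < d$, i.e. is Cohen--Macaulay). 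Substituting into $\lambda(\RR/I^{n+1}) = \rme_0\binom{n+d}{d} - \lambda(\RR/I)\binom{n+d-1}{d-1}\cdot(\text{correction}) - \lambda(I^{n+1}/IQ^n) + \dots$ and simplifying, the $\binom{n+d-c-1}{d-1}$ term reorganizes into $\binom{n+d-c-1}{d-c-1}$, yielding the displayed formula; expanding binomially then gives $\rme_i = 0$ for $i \ne c+1$ and $\rme_{c+1} = (-1)^{c+1}$. When $c = d$ the extra term disappears (it would be $\binom{n-1}{-1} = 0$ for $n \ge 1$), giving (iv).

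The main obstacle I anticipate is the bookkeeping in (iii): getting the exact Hilbert \emph{function} (not just polynomial) identity $\lambda(\RR/I^{n+1}) = \rme_0\binom{n+d}{d} - \rme_1\binom{n+d-1}{d-1} + \binom{n+d-c-1}{d-c-1}$ valid \emph{for all $n \ge 0$} requires knowing that $S$ (equivalently $G$) has no $\m$-torsion affecting low-degree terms — this is where Cohen--Macaulayness of $S$ for $c < d$, from (i), is essential, and the case analysis $c < d$ versus $c = d$ must be tracked carefully to land the off-by-one in the binomial coefficient. The rest is routine once the structure theorem $S \cong (X_1,\dots,X_c)\mathcal{F}(Q)$ is in hand.
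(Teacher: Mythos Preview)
The survey does not give its own proof of this theorem; it is quoted verbatim from \cite{GNOmrl}, so there is no argument in the paper to compare against. That said, here is an assessment of your sketch on its own merits.

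Your handling of (1)$\Leftrightarrow$(2) via the multiplicity formula $s_0=\rme_1-\rme_0+\lambda(\RR/I)$ and the corollary that $s_0=1$ forces $\m S=0$ is correct and is exactly what the survey's machinery (Proposition~\ref{SallyJci} and its corollary) provides. Likewise, once (3) is known, your depth computations for (i)--(ii) via $\depth_{\mathcal{F}(Q)}(X_1,\ldots,X_c)=d-c+1$ and Proposition~\ref{SallyJci}(4)--(5) are right, and the Hilbert-function bookkeeping for (iii)--(iv) is essentially correct (modulo a typo: the second binomial should be $\binom{n+d-c-1}{d-c-1}$, not $\binom{n+d-c-1}{d-1}$).

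There is, however, a genuine gap in (2)$\Rightarrow$(3). You assert that a torsionfree rank-one graded $\mathcal{F}(Q)$-module with nonzero degree-one part is \emph{generated in degree~$1$}. That is false in general: $(X_1,X_2^2)\subset k[X_1,X_2]$ is torsionfree of rank one with $\dim_k J_1=1$, yet is not generated by linear forms. In the Sally setting, ``generated in degree~$1$'' is precisely the statement $I^3=QI^2$, which is the substantive content here and does not come for free. Your alternative, the bound $\red_Q(I)\le s_0+1$, is Theorem~\ref{Rossibound} (Rossi's bound), which the survey records only for $d\le 2$; in higher dimension it is an open question, so you cannot invoke it. The proof in \cite{GNOmrl} supplies a separate argument to force $I^3=QI^2$ under hypothesis~(1); this is the missing step, and once it is filled in the rest of your outline goes through.
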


This result is very similar to the assumptions and the consequences of the {\em Sally conjecture}.

\begin{Corollary}
In addition, if $I$ is an almost complete intersection then the Sally module is Cohen-Macaulay.
\end{Corollary}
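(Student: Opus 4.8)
The plan is to combine the structural description of the Sally module from the preceding Theorem (\cite[Theorem 1.2]{GNOmrl}) with the almost complete intersection hypothesis, which forces $c=\lambda(I^2/QI)$ to take its extreme value. Recall that $I$ being an almost complete intersection means $\nu(I)=d+1$, so that in the notation $I=(Q,b_1,\ldots,b_{s-d})$ we have $s-d=1$, i.e.\ $I=(Q,b)$ for a single element $b$. First I would observe that under the equivalent condition (2) of the Theorem, $\m S = (0)$ and $\operatorname{rank}_{\mathcal{F}(Q)}(S)=1$; and under (3), $S \simeq (X_1,\ldots,X_c)\mathcal{F}(Q)$ for some $0<c\le d$, with $c=\lambda(I^2/QI)$. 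So the whole argument reduces to pinning down $c$ and then reading off depth.

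The key step is to show that for an almost complete intersection $c=1$. Here I would use the exact sequence (\ref{cpv6}) specialized to $s-d=1$, namely
\[
\RR[Q\TT]\oplus\RR[Q\TT][-1]\stackrel{\varphi}{\lar}\RR[I\TT]\lar S_Q(I)[-1]\rar 0,
\]
and its special-fiber reduction. Tensoring with $\RR/\m$ and using $\m S_Q(I)=0$ (from condition (2)), the fiber $F(I)$ is generated over $F(Q)$ by at most two elements; by the Cayley--Hamilton bound of Theorem~\ref{CHTheorem}(1) this already constrains $\red_Q(I)$. More to the point, $c=\lambda(I^2/QI)$ and the single new generator $b$ forces $I^2/QI$ to be the cyclic module generated by the image of $b^2$, hence $\lambda(I^2/QI)\le 1$; since $S_Q(I)\ne 0$ (otherwise there is nothing to prove, $S$ trivially Cohen--Macaulay), we get $c=1$ exactly. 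Then part (i) of the Theorem gives $\depth_{\Rees(Q)}S = d-c+1 = d$, i.e.\ $S$ is a maximal Cohen--Macaulay $\Rees(Q)$-module of dimension $d$, which by Proposition~\ref{SallyJci}(3) is the full dimension; hence $S_Q(I)$ is Cohen--Macaulay.

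Alternatively, and perhaps more cleanly, once $c=1$ one invokes Proposition~\ref{SallyJci}(4): $S_{\BB/\AA}$ is Cohen--Macaulay if and only if $\depth\gr(\BB)\ge d-1$, and part (i) of the quoted Theorem gives $\depth G\ge d-c = d-1$. The main obstacle I anticipate is the bookkeeping needed to identify $c$ with $1$ rigorously: one must rule out $c\ge 2$, which by part (ii) of the Theorem would force $\depth G=d-c\le d-2$, and show this is incompatible with $\nu(I)=d+1$ and $\m S=0$. I expect the cleanest route is the direct cyclicity argument for $I^2/QI$ above — since $I=(Q,b)$, any element of $I^2$ is congruent modulo $QI$ to a multiple of $b^2$, so $I^2/QI$ is cyclic, giving $\lambda(I^2/QI)\le 1$ immediately and hence $c\le 1$.
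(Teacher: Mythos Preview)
The paper states this Corollary without proof, so there is no argument to compare against; your deduction is correct and is the natural one intended. The essential content is your final paragraph: writing $I=(Q,b)$ makes $I^2/QI$ cyclic (generated by the class of $b^2$), and since $\m S=0$ from condition (2) of the Theorem this module is a vector space over the residue field, so $c=\lambda(I^2/QI)\le 1$; the Theorem already has $c>0$, whence $c=1$ and $\depth_{\Rees(Q)} S=d-c+1=d$ by part (i).

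Two minor comments. First, the detour through the sequence (\ref{cpv6}) and the special fiber $F(I)$ is unnecessary and somewhat distracting; the cyclicity argument stands on its own and should be presented directly. Second, make explicit that the implication ``cyclic $\Rightarrow$ length $\le 1$'' uses $\m S=0$: you invoke $\m S=0$ a sentence earlier but do not cite it at the precise moment it is needed, and without it a cyclic $\RR$-module can of course have arbitrary length.
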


\subsection{Normal filtration}
\cite{CPR14}, \cite{OzRo15},
\cite{Phuong15}

\begin{Theorem}{\rm \cite[Lemma 2.3]{Phuong15}}
 Let $(\RR,\m)$ be an analytically unramified local ring satisfying Serre  condition
$S_2$ and let $\mathcal{F} = \{I_n\}$ be a filtration of ideals such that $\codim \ I_1\geq 2$ and 
$\BB = \bigoplus_{n\geq 0} I_n \TT^n$ is Noetherian. Then the integral closure of $\BB$ in $\RR[\TT]$ has the condition $S_2$.
\end{Theorem}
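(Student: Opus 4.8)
The plan is to identify $\overline{\BB}$, the integral closure of $\BB$ in $\RR[\TT]$, with its own $S_2$-ification, and then to invoke the standard fact that an $S_2$-ification is $S_2$.

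First I would dispatch the preliminaries. Since $\RR$ is analytically unramified and $\BB$ is a finitely generated $\RR$-algebra, Rees's finiteness theorem shows $\overline{\BB}$ is module-finite over $\BB$, hence Noetherian; and $\overline{\BB}$ is reduced, being a subring of $\RR[\TT]$, which is reduced because analytic unramifiedness forces $\RR$ to be reduced. In particular $\overline{\BB}$ already satisfies $S_1$, so only $S_2$ at primes of height $\geq 2$ remains to be checked. Next, choose a nonzerodivisor $a\in I_1$: the associated primes of $\RR$ coincide with its minimal primes, and $I_1$, of codimension $\geq 1$, lies in none of them, so prime avoidance yields such an $a$. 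From $a\in\RR\subseteq\overline{\BB}$, $a\TT\in\BB\subseteq\overline{\BB}$ and $\TT=(a\TT)/a$ one gets $\overline{\BB}[a^{-1}]=\RR[a^{-1}][\TT]$ and $\RR[\TT]\subseteq Q(\overline{\BB})$, so $\overline{\BB}$ and $\RR[\TT]$ share a common total ring of fractions $Q$. Finally record that $\RR[\TT]$ is $S_2$, since $\RR$ is and $S_2$ is inherited by polynomial extensions.

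The heart of the argument is then the following. Let $\overline{\BB}'=\bigcap_{\height P\leq 1}\overline{\BB}_P$ be the intersection, inside $Q$, of the localizations of $\overline{\BB}$ at its primes of height at most $1$; this is the $S_2$-ification of $\overline{\BB}$, and I would use that it is module-finite over $\overline{\BB}$ and is itself $S_2$ (valid once $\overline{\BB}$ admits a dualizing complex, which I take as part of the standing hypotheses). I claim $\overline{\BB}'=\overline{\BB}$; one inclusion is trivial. For the other I would first prove $\overline{\BB}'\subseteq\RR[\TT]$: for a prime $P$ of $\overline{\BB}$ with $P\cap\RR\not\supseteq I_1$ one may pick a nonzerodivisor $a\in I_1\setminus P$, so that $\overline{\BB}_P=\RR[\TT]_{P'}$ for the corresponding prime $P'$ of $\RR[\TT]$, with equal heights; and because $\RR\to\RR[\TT]$ is flat and $\codim I_1\geq 2$, no prime of $\RR[\TT]$ of height $\leq 1$ contains $I_1$. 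Thus the height-$\leq 1$ primes of $\overline{\BB}$ lying off $I_1$ correspond, with identical localizations, to all height-$\leq 1$ primes of $\RR[\TT]$, so intersecting over this subfamily only enlarges the result: $\overline{\BB}'\subseteq\bigcap\{\RR[\TT]_{P'}:\height P'\leq 1\}=\RR[\TT]$, the last equality because $\RR[\TT]$ is $S_2$. Now $\overline{\BB}'$ is integral over $\overline{\BB}$, lies in $\RR[\TT]$, and $\overline{\BB}$ is integrally closed in $\RR[\TT]$ by construction; hence $\overline{\BB}'\subseteq\overline{\BB}$, so $\overline{\BB}'=\overline{\BB}$ is $S_2$, as wanted.

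The step I expect to be the real obstacle is the inclusion $\overline{\BB}'\subseteq\RR[\TT]$: its content is that the failure of $\overline{\BB}$ to equal $\RR[\TT]$ is concentrated on $V(I_1\overline{\BB})$ — off $I_1$ the ring $\overline{\BB}$ is just a polynomial ring over a localization of $\RR$ — and is therefore invisible to the height-$\leq 1$ primes precisely because $\codim I_1\geq 2$; with codimension $1$ this collapses and the statement is false. A secondary technical point to pin down is the module-finiteness and $S_2$-property of the $S_2$-ification, which is routine for rings admitting a dualizing complex; in the setting of the lemma one either takes this for granted or reads "analytically unramified" in the stronger sense that also supplies it.
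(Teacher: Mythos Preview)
The paper is a survey and does not supply a proof of this statement; it is merely quoted from \cite{Phuong15} and immediately followed by the next cited theorem. So there is no ``paper's own proof'' to compare against.

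As to your argument itself: the strategy is sound and is in fact the standard one. The key inclusion $\overline{\BB}'\subseteq\RR[\TT]$ is argued correctly: inverting an element of $I_1$ identifies $\overline{\BB}$ with $\RR[\TT]$ locally, so height-$\leq 1$ primes of $\overline{\BB}$ avoiding $I_1$ match those of $\RR[\TT]$, and the hypothesis $\codim I_1\geq 2$ guarantees that \emph{every} height-$\leq 1$ prime of $\RR[\TT]$ arises this way; the $S_2$ property of $\RR[\TT]$ then recovers $\RR[\TT]$ as the intersection. The final step, pulling $\overline{\BB}'$ back into $\overline{\BB}$ via integral closedness in $\RR[\TT]$, is clean.

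The one genuine soft spot you already flag: module-finiteness of the $S_2$-ification $\overline{\BB}'$ over $\overline{\BB}$ requires something like a canonical module or dualizing complex, which ``analytically unramified'' alone does not provide. You can sidestep this entirely. Once you have $\overline{\BB}'\subseteq\RR[\TT]$, observe that $\overline{\BB}$ is graded and $\overline{\BB}'$ is the intersection of localizations at \emph{homogeneous} height-$\leq 1$ primes (the $S_2$ condition for graded rings can be tested on graded primes), hence $\overline{\BB}'$ is a graded $\RR$-submodule of $\RR[\TT]$; its degree-$n$ piece is an $\RR$-module between $\overline{I^n}$ and $\RR$, and the equality $(\overline{\BB}')_\p=\overline{\BB}_\p$ at every height-$\leq 1$ prime of $\RR$ together with $S_2$ for $\RR$ forces the graded pieces to agree. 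This avoids invoking any finiteness of the $S_2$-ification.
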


\begin{Theorem}{\rm \cite[Theorem 2.5]{CPR14}} Let $(\RR, \m)$ be an analytically umramified 
Cohen-Macaulay local ring of dimension $d$ and infinite residue field and let $I$ be an $\m$ primary
 ideal. If $
\mathcal{F}=  \{ I_n = \overline{I^n}\}$
 is the corresponding normal filtration, denote by $\BB$ its Rees algebra and by $\overline{S}$ its
 Sally module.
 Then 
 \begin{enumerate}
 \item If $\overline{S}\neq 0$ then $\dim \overline{S} = d$.
 
\item If $\overline{S}\neq 0$ then $\depth \gr(\BB) \geq  \depth \overline{S} -1$.

 \item If ${s_0}(\overline{S})=1$ then $\overline{S}$ is Cohen-Macaulay.
 
 \end{enumerate}
\end{Theorem}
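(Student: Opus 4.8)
The plan is to reduce everything to Proposition~\ref{SallyJci} together with the extra Serre condition $S_2$ that normality forces on $\BB$. First set up the data. Since the residue field is infinite, pick a minimal reduction $J$ of $I$; as $\RR$ is Cohen--Macaulay and $I$ is $\m$-primary, $J$ is generated by a system of parameters, in fact by a regular sequence $a_1,\dots,a_d$. Because $\RR$ is analytically unramified, $\BB=\bigoplus_{n\ge 0}\overline{I^{\,n}}\,t^n$ is a finite module over $\RR[It]$, hence over $\AA=\RR[Jt]$; thus $\overline{I^{\,n+1}}=J\,\overline{I^{\,n}}$ for $n\gg 0$, so $J$ is a reduction of the normal filtration and $\overline S=S_{\BB/\AA}$ is exactly the Sally module attached to the pair $(\AA,\BB)$ in the sense of the Introduction. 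In particular the hypotheses of Proposition~\ref{SallyJci} hold: $\RR$ Cohen--Macaulay, $J$ a system of parameters, $\BB$ a finite Rees extension of $\AA$.

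Part (1) is then precisely Proposition~\ref{SallyJci}(3). For part (2) I would argue by cases on $\depth\gr(\BB)$. If $\depth\gr(\BB)\ge d-1$, then $\overline S$ is Cohen--Macaulay by Proposition~\ref{SallyJci}(4), so by part (1) $\depth\overline S=\dim\overline S=d\le\depth\gr(\BB)+1$. If $\depth\gr(\BB)<d-1$, then a fortiori $\depth\gr(\BB)<d$, and Proposition~\ref{SallyJci}(5) gives $\depth\overline S=\depth\gr(\BB)+1$. In either case $\depth\gr(\BB)\ge\depth\overline S-1$.

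For part (3), assume $s_0(\overline S)=1$; in particular $\overline S\neq 0$, so $\dim\overline S=d$. By the corollary to Proposition~\ref{SallyJci} recalled above, $s_0(\overline S)=\lambda(\overline S_{\p})$ with $\p=\m\AA$, and $s_0(\overline S)=1$ forces $\m\,\overline S=0$. Hence $\overline S$ is a finitely generated graded module over $\AA/\m\AA$; since $J$ is generated by a regular sequence, $\gr_J(\RR)\cong(\RR/J)[T_1,\dots,T_d]$, so $\AA/\m\AA\cong(\RR/\m)[T_1,\dots,T_d]=:P$, a polynomial ring over a field. As $\m\AA$ annihilates $\overline S$, its Serre conditions are the same over $\AA$ and over $P$; by the proof of Proposition~\ref{SallyJci} the module $\overline S$ satisfies $S_1$, and with $\dim\overline S=\dim P$ this makes $\overline S$ torsion-free of rank $s_0(\overline S)/\rme_0(P)=1$ over $P$. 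If $d=1$ this already yields $\overline S\cong P(-a)$ for some $a$ (a finitely generated torsion-free graded module of rank one over a PID), hence Cohen--Macaulay; alternatively, for $d=1$ one has $\depth\gr(\BB)\ge 0=d-1$ and Proposition~\ref{SallyJci}(4) applies. If $d\ge 2$, here is where analytic unramifiedness is genuinely used: $\BB$ is the integral closure of $\RR[It]$ in $\RR[t]$, hence integrally closed in $\RR[t]$, and $\codim I_1=d\ge 2$, so by the $S_2$-statement recalled before the theorem (Phuong's lemma, or the classical fact for analytically unramified Cohen--Macaulay rings) $\BB$ satisfies $S_2$; by the last step of the proof of Proposition~\ref{SallyJci}, $\BB_{+}$ and therefore $\overline S$ then satisfy $S_2$. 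A finitely generated torsion-free $S_2$-module over the normal ring $P$ is reflexive, and a graded reflexive module of rank one over the polynomial ring $P$ is isomorphic to $P(-a)$ because $\mathrm{Cl}(P)=0$. Thus $\overline S$ is free over $P$, hence Cohen--Macaulay.

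The routine identifications ($\AA/\m\AA\cong P$ via $J$ being a regular sequence; transfer of Serre conditions along $\AA\twoheadrightarrow P$; $\BB$ integrally closed in $\RR[t]$) are standard. The one genuinely delicate point is the last step of part (3), passing from ``$S_2$ plus rank one over a polynomial ring'' to ``free'': it combines the fact that $\BB$ inherits $S_2$ from the normality of the filtration --- the only use of the analytically unramified hypothesis beyond finiteness of $\BB$ --- with the vanishing of the divisor class group of $P$. Everything else is a direct appeal to Proposition~\ref{SallyJci} and its corollary, which apply here verbatim once one notes that a minimal reduction of an $\m$-primary ideal in a Cohen--Macaulay ring is a system of parameters.
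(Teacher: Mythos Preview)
The survey does not supply a proof of this theorem; it merely records the statement from \cite{CPR14}. Your argument is correct and is built from exactly the machinery the survey has already developed. Parts (1) and (2) are direct applications of Proposition~\ref{SallyJci}(3)--(5), with your case split in (2) handling the boundary $\depth\gr(\BB)=d-1$ cleanly. For part (3) you correctly use the corollary following Proposition~\ref{SallyJci} to get $\m\,\overline{S}=0$ from $s_0=1$, identify $\overline{S}$ as a torsion-free rank-one graded module over the polynomial ring $\AA/\m\AA\cong(\RR/\m)[T_1,\dots,T_d]$, and then invoke the $S_2$ property of $\BB$---supplied by Phuong's lemma (the result cited immediately before this theorem), since $\BB$ is the integral closure of $\RR[It]$ in $\RR[t]$ and $\codim I_1=d\ge 2$---together with the remark at the end of the proof of Proposition~\ref{SallyJci} that $S_2$ passes from $\BB$ to $\BB_{+}$ and thence to $\overline{S}$. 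A graded torsion-free $S_2$ module of rank one over a polynomial ring over a field is reflexive and hence free (trivial class group), which gives Cohen--Macaulayness. This is the expected line of argument, and your identification of the one place where the normality hypothesis is genuinely used (the $S_2$ condition on $\BB$ in part (3) for $d\ge 2$) is exactly right.
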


\begin{Theorem}{\rm \cite[Theorem 1.2]{OzRo15}}
Let $(A, \m)$ be a Cohen-Macaulay local ring and $I$ an $\m$--primary ideal. Let $Q$ be a minimal reduction of $I$ and let 
\[ R= A[It],\;\; T=A[Qt],\;\; G=\gr_{I}(R),\;\; C=(I^{2}R/I^{2}T)(-1),\;\; B=T/\m T.\]
Assume that $I$ is integrally closed. Then the following conditions are equivalent:

\begin{enumerate}
\item[(1)] $\rme_{1}(I)= \rme_{0}(I) - \lambda_{A}(A/I) + \lambda_{A}(I^{2}/QI)+1$,

\item[(2)] $\m C =(0)$ and $\rank_{B}(C)=1$.

\item[(3)] $C \simeq (X_{1}, X_{2}, \ldots, X_{c})B(-1)$ as graded $T$--modules for some $1 \leq c \leq d$, where $X_{1}, \ldots, X_{c}$ are linearly independent linear forms of the polynomial ring $B$.
\end{enumerate}

\noindent When this is the case, $c= \lambda_{A}(I^{3}/QI^{2})$ and $I^{4}=QI^{3}$, and the following assertions hold true:
\begin{enumerate}
\item[{\rm (i)}] $\depth G \geq d-c$ and $\depth_{T} C = d-c+1$.

\item[{\rm (ii)}] $\depth G = d-c$ if $c \geq 2$.

\item[{\rm (iii)}] Suppose $c=1 <d$. Then $\mbox{\rm HP}_{I}(n) = \lambda_{A}(A/I^{n+1})$ for all $n \geq 0$ and
\[ \rme_{i}(I) = \left\{ \begin{array}{ll}   
\rme_{1}(I) - \rme_{0}(I) + \lambda_{A}(A/I) + 1 \;\; & \mbox{if} \; i=2 \\
1 & \mbox{if} \; i=3 \;\; \mbox{and} \;\; d \geq 3 \\
0 & \mbox{if} \; 4 \leq i \leq d. \\
\end{array}   \right.\]

\item[{\rm (iv)}] Suppose $2 \leq c <d$. Then $\mbox{\rm HP}_{I}(n) = \lambda_{A}(A/I^{n+1})$ for all $n \geq 0$ and
\[ \rme_{i}(I) = \left\{ \begin{array}{ll}   
\rme_{1}(I) - \rme_{0}(I) + \lambda_{A}(A/I)  \;\; & \mbox{if} \; i=2 \\
0 & \mbox{if} \; i \neq c+1, c+2, \;\; 3 \leq i \leq d \\
(-1)^{c+1} & \mbox{if} \;  i = c+1, c+2, \;\; 3 \leq i \leq d. \\
\end{array}   \right.\]

\item[{\rm (v)}] Suppose $c =d$. Then $\mbox{\rm HP}_{I}(n) = \lambda_{A}(A/I^{n+1})$ for all $n \geq 2$ and
\[ \rme_{i}(I) = \left\{ \begin{array}{ll}   
\rme_{1}(I) - \rme_{0}(I) + \lambda_{A}(A/I)  \;\; & \mbox{if} \; i=2, \;\; \mbox{and} \;\; d \geq 2 \\
0 & \mbox{if} \;  3 \leq i \leq d \\
\end{array}   \right.\]

\item[{\rm (vi)}] The Hilbert series $\mbox{\rm HS}_{I}(z)$ is given by
\[ \mbox{\rm HS}_{I}(z) =
\frac{ \lambda_{A}(A/I) + ( \rme_{0}(I) - \lambda_{A}(A/I) - \lambda_{A}(I^{2}/QI) -1  )z + ( \lambda_{A}(I^{2}/QI) + 1 ) z^{2} + (1-z)^{c+1}z  }{(1-z)^{d}}. \]

\end{enumerate}
\end{Theorem}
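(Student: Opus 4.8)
The plan is to run the argument that proves the multiplicity-one theorem above, but one degree higher, with $C$ playing the role of the Sally module $S=S_Q(I)$. First I would reduce to the case where $A/\m$ is infinite and $Q=(a_1,\dots,a_d)$ is generated by a regular sequence, and note that, unwinding the shift, $C_n=I^{n+1}/I^2Q^{n-1}$ for $n\geq 1$, so $C_1=0$, $C_2=I^3/QI^2$, and the surjections $I^{n+1}/IQ^n\twoheadrightarrow I^{n+1}/I^2Q^{n-1}$ realise $C$ as a graded quotient of $S$ of $T$-modules, fitting in an exact sequence $0\to W\to S\to C\to 0$ with $W_n=I^2Q^{n-1}/IQ^n$. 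Here $T=A[Qt]$ and $B=T/\m T$ is a polynomial ring over $A/\m$ in $d$ variables.

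The first real step is a numerical identity. Since $I^2/QI$ is killed by $Q$, there is a graded surjection $(I^2/QI)\otimes_A\gr_Q(A)(-1)\twoheadrightarrow W$, so $\dim W\leq d$ and $\rme_0(W)\leq\lambda(I^2/QI)$; using Itoh-type results on the filtration of an integrally closed ideal (such as $Q\cap I^{n+1}=QI^n$) one checks that the kernel of this surjection has dimension $<d$, whence $\rme_0(W)=\lambda(I^2/QI)$. Recalling that, for Cohen--Macaulay $A$, the multiplicity of $S$ equals $s_0(I):=\rme_1(I)-\rme_0(I)+\lambda(A/I)$ and that $\dim S=d$ precisely when $s_0(I)\neq 0$, additivity of multiplicities in $0\to W\to S\to C\to 0$ gives $\rme_0(C)=s_0(I)-\lambda(I^2/QI)$ once $\dim C=d$. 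Hence condition (1) is equivalent to $\rme_0(C)=1$.

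Next comes the structural dichotomy. From $0\to I^2T\to I^2R\to C(+1)\to 0$, in which $I^2T$ is a maximal Cohen--Macaulay $T$-module --- this is the point where integral-closedness re-enters, via the Cohen--Macaulayness of $T/I^2T$, an Itoh-type statement --- and $I^2R$ is torsion-free over $T$, one gets, exactly as in Proposition~\ref{SallyJci}(2), that $C$ is either zero or satisfies Serre's condition $S_1$; being annihilated by a power of $Q$ it is supported on $V(\m T)$, so $\m T$ is its only associated prime, $\dim C=d$, and $\rme_0(C)=\lambda(C_{\m T})$. Therefore $\rme_0(C)=1$ forces $\m T\cdot C_{\m T}=0$, hence $\m C=0$, and $C$ becomes a torsion-free rank-one graded $B$-module: this is (1)$\Rightarrow$(2), and (2)$\Rightarrow$(1) is the reverse reading, since then $\rme_0(C)=\rank_B C=1$. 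For (2)$\Leftrightarrow$(3), a torsion-free rank-one graded module over the polynomial ring $B$ embeds, after clearing denominators, into a twist $B(a)$; the facts that $C$ lives in degrees $\geq 2$ with $C_1=0$ and $C_2\neq 0$, together with the crucial fact that $C$ is generated by $C_2$, pin down $a=-1$ and identify the image with $(X_1,\dots,X_c)B(-1)$, where $X_1,\dots,X_c$ is a $k$-basis of the image of $C_2$, that is, $c=\dim_k C_2=\lambda(I^3/QI^2)$ linearly independent linear forms; moreover generation in degree $2$ is literally the assertion $I^4=QI^3$. The converse (3)$\Rightarrow$(2) is immediate.

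Finally, (i)--(vi) are consequences of (3). The forms $X_1,\dots,X_c$ extend to a system of parameters of $B$, so the truncated Koszul complex computes the Hilbert series of $(X_1,\dots,X_c)B$ and hence of $C$; feeding this through the length identities relating $\lambda(C_n)$ to $\lambda(A/I^{n+1})$ via $Q$ yields the Hilbert polynomial of $I$, the vanishing/sign pattern of the $\rme_i(I)$ in (iii)--(v), and the closed form of $\mathrm{HS}_I(z)$ in (vi). The depth statements (i)--(ii) follow by chasing $\depth$ through $0\to I^2T\to I^2R\to C(+1)\to 0$ and the basic Rees-algebra exact sequences, as in Proposition~\ref{SallyJci}(4)--(5): $\depth_T C=d-c+1$ translates into $\depth G\geq d-c$, with equality forced when $c\geq 2$ by the Koszul structure of $C$. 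The hardest part is the claim that $C$ is generated in the single degree $2$, equivalently $\red_Q(I)\leq 3$, i.e. $I^4=QI^3$: torsion-freeness and rank one do not force single-degree generation, so this needs genuine extra leverage from $I$ being integrally closed --- I expect it to require Itoh-type inequalities among $\rme_1(I),\rme_2(I),\rme_3(I)$ combined with a Sally-module argument --- and establishing that $T/I^2T$ is Cohen--Macaulay is a second essential, non-formal use of integral-closedness.
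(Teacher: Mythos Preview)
The paper you are working from is a survey, and this theorem is merely quoted there as \cite[Theorem 1.2]{OzRo15} with no proof given; there is nothing in the paper to compare your argument against. So the task of matching the paper's own proof is vacuous here.

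That said, your strategy is coherent and tracks what one would expect the Ozeki--Rossi argument to look like, modelled on the Goto--Nishida--Ozeki rank-one theorem that the survey quotes just before it. Your identification $C_n=I^{n+1}/I^2Q^{n-1}$, the short exact sequence $0\to W\to S\to C\to 0$ with $W_n=I^2Q^{n-1}/IQ^n$, and the multiplicity bookkeeping $\rme_0(C)=s_0(I)-\lambda(I^2/QI)$ are all correct. The reduction of (1)$\Leftrightarrow$(2) to $\rme_0(C)=1$ together with the $S_1$/unique-associated-prime argument is exactly the mechanism used for the ordinary Sally module in Proposition~\ref{SallyJci} and its corollary, and it transfers once you know $I^2T$ is maximal Cohen--Macaulay over $T$.

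You are right to flag the two genuinely nontrivial inputs as (a) the Cohen--Macaulayness of $T/I^2T$ (equivalently of $I^2T$) and (b) generation of $C$ in a single degree, i.e.\ $I^4=QI^3$. Both lean essentially on the integral-closedness of $I$; for (a) the relevant fact is Itoh's $Q\cap I^2=QI$ for integrally closed $I$, which makes $T/I^2T$ a polynomial-type extension, and for (b) one indeed needs more than torsion-freeness of rank one over $B$. Your proposal is honest about not having closed that gap; in a complete proof this is the step that carries the weight, and without it (3) and the consequences (i)--(vi) do not follow. If you want to turn this outline into a self-contained argument you will have to supply that reduction-number bound directly, not just gesture at ``Itoh-type inequalities''.
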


\subsection{Monomial ideals}

Sally modules of monomial ideals are hard to construct in a manner that inherit the monomial structure. More 
precisely if $I$ is a monomial ideal of $\RR = k[x_1, \ldots, x_d]$, its minimal reductions $ Q$ used
in the definition of the Sally modules of $I$ are rarely monomial. One should keep in mind that 
the point of using $Q$ was to guarantee a good platform in $\RR[Q\TT]$ from which to examine 
$\RR[I\TT]$. Of course this requirement may be satisfied in other cases, that is without having for $Q$ 
a minimal reduction.

\medskip

Let us illustrate with a special class of ideals.  
Monomial ideals of finite colength which are almost complete intersections have a very simple description. 
Let $\RR=k[x_1,\ldots ,x_d]$ be  a polynomial ring over  a (possibly infinite) field and let $J$ and $I$ be $\RR$--ideals such that
\[{\ds J=(x_1^{a_1},\; \ldots ,\; x_d^{a_d}) \subset (J,\; x_1^{b_1} \cdots x_d^{b_d})=I. }\] 
This is the general form of almost complete intersections of $\RR$ generated by monomials. Perhaps the most interesting cases are those 
where
${\ds \sum \frac{b_i}{a_i}  <1}$. This inequality
ensures that $J$ is not a reduction of $I$.
Let 
\[{\ds Q =(x_1^{a_1}-x_d^{a_d}, \ldots,
\; x_{d-1}^{a_{d-1}}-x_d^{a_d},\;x_1^{b_{d}}
\cdots  x_d^{b_d} ) }.\]

Note that $Q$ is a reduction of $I$: It is enough to observe that
\[ (x_d^{a_d})^d = x_{d}^{a_d}(
x_d^{a_d} - x_1^{a_1} +
x_1^{a_1}) \cdots (x_d^{a_d} 
 - x_{d-1}^{a_{d-1}} + 
x_{d-1}^{a_{d-1}})\in QI^{d-1}, \] 
 in particular $\red_Q(I) \leq d-1$.

 \medskip

 Let now $\BB $ be the Rees algebra of the integral closure filtration $\{ \bar{I^n}\}$. If 
 $\AA_0 = \RR[Q\TT]$ and $\AA= \RR[I\TT]$, either of these defines a Sally
 module, $S_{\AA_0}(\BB)$ and $S_{\AA}(\BB)$, the latter carrying a monomial structure. While 
 $\AA_0$ is  always Cohen--Macaulay, $\AA$ is still often very amenable.

\begin{Conjecture} \label{monoaciRacm}
{\rm Let $I$ be a monomial ideal of $k[x_1, \ldots, x_n]$. If $I$ is an almost complete intersection 
of finite colength its Rees algebra $\RR[I\TT]$ is almost Cohen--Macaulay.
}\end{Conjecture}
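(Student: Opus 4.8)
The plan is to reduce the conjecture, via the formalism of Section~2, first to a depth statement about $\gr_I(\RR)$ and then to a lattice‑point/Gr\"obner‑basis computation. First I would pin down the reduction: the ideal $Q=(x_1^{a_1}-x_n^{a_n},\dots,x_{n-1}^{a_{n-1}}-x_n^{a_n},\,x_1^{b_1}\cdots x_n^{b_n})$ is generated by $n$ elements and is a reduction of the $\m$‑primary ideal $I$, hence is a system of parameters of the Cohen--Macaulay ring $\RR=k[x_1,\dots,x_n]$, hence is generated by a regular sequence; thus $\AA=\RR[Q\TT]$ is Cohen--Macaulay and, since $\red_Q(I)\le n-1$, we are exactly in the situation of Proposition~\ref{SallyJci}. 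Chasing the four exact sequences in the proof of that proposition shows that $\depth\RR[I\TT]\ge n$ (i.e. $\RR[I\TT]$ is almost Cohen--Macaulay, its dimension being $n+1$) as soon as $\depth\gr_I(\RR)\ge n-1$, equivalently as soon as the Sally module $S_Q(I)$ is Cohen--Macaulay (Proposition~\ref{SallyJci}(4)). So it suffices to prove $\depth\gr_I(\RR)\ge n-1$.

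For this, the direct route is the Huckaba test, Theorem~\ref{Huctest}: $S_Q(I)$ is Cohen--Macaulay if and only if $\rme_1(I)=\sum_{j\ge1}\lambda(I^{j}/QI^{j-1})$, a finite sum since $\red_Q(I)\le n-1$. The left-hand side is a lattice‑point count: writing $m=x^{b}$, one has $x^{c}\in I^{k}$ iff $\mathrm{ord}_I(x^{c}):=\max\{\ell+\sum_i\lfloor(c_i-\ell b_i)/a_i\rfloor:\ell\ge0,\ \ell b\le c\}\ge k$, so $\lambda(\RR/I^{k})=\#\{c\in\mathbb Z_{\ge0}^{n}:\mathrm{ord}_I(x^{c})<k\}$ and $\rme_1(I)$ is extracted by the usual finite‑difference calculus; the hypothesis $\sum b_i/a_i<1$ enters here, forcing the optimal $\ell$ in the maximum to stay bounded. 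The right-hand side involves the non‑monomial ideal $Q$; I would compute $\lambda(I^{j}/QI^{j-1})$ by splitting $QI^{j-1}=x^{b}I^{j-1}+\sum_{i<n}(x_i^{a_i}-x_n^{a_n})I^{j-1}$ and pushing the length computation into the monomial complete intersection $\RR/(x_1^{a_1},\dots,x_n^{a_n})$, in which the binomial generators of $Q$ collapse to a single monomial.

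A second route is to exhibit a regular sequence of length $n-1$ on $\gr_I(\RR)$ directly. Because a monomial lies in $I^{k}$ precisely when its $I$‑order is $\ge k$, one sees immediately that $x_i^{a_i}-x_n^{a_n}\notin I^{2}$, so the initial forms $q_i^{*}\in[\gr_I(\RR)]_1$ are nonzero; the claim is that $q_1^{*},\dots,q_{n-1}^{*}$ is a $\gr_I(\RR)$‑regular sequence, which by the Valabrega--Valla criterion amounts to $(q_1,\dots,q_{n-1})\cap I^{k}=(q_1,\dots,q_{n-1})I^{k-1}$ for all $k$. The structural fact that makes this tractable is that $\{q_1,\dots,q_{n-1}\}$ is a Gr\"obner basis of the ideal it generates (its leading terms $x_1^{a_1},\dots,x_{n-1}^{a_{n-1}}$, under lex with $x_1>\dots>x_n$, are pairwise coprime), so membership in and intersection with the monomial ideals $I^{k}$ can be tested by the rewriting $x_i^{a_i}\mapsto x_n^{a_n}$, and the content of the criterion is that this rewriting never carries an element of $I^{k}$ outside $(q_1,\dots,q_{n-1})I^{k-1}$.

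The hard part, in either route, is precisely the point flagged in the text: $Q$ is not monomial, so one cannot simply invoke the combinatorics of monomial ideals for $QI^{j-1}$ or for $(q_1,\dots,q_{n-1})\cap I^{k}$. A possibly more robust third route, worth developing in parallel, is to exploit that $\RR[I\TT]$ is itself an affine semigroup ring $k[H]$, with $H\subseteq\mathbb Z_{\ge0}^{n+1}$ generated by $e_1,\dots,e_n$, the vectors $a_ie_i+e_{n+1}$, and $b+e_{n+1}$; then $\depth k[H]$ is governed combinatorially by the "holes" of $H$ (the local cohomology of $k[H]$, equivalently the projective dimension of $k[H]$ over the polynomial subring on a choice of $n+1$ extreme generators), and one would show that $\sum b_i/a_i<1$ confines these holes to a set whose associated module has projective dimension one, giving $\depth k[H]\ge n$. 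I expect the bookkeeping of the holes of $H$, and the verification that they occupy a single codimension, to be where the real work lies.
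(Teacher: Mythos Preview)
The statement you are addressing is a \emph{Conjecture}: the paper does not prove it, and it remains open in the stated generality. What the paper does is record the $d=2$ case (from \cite{RS03}), cite \cite{BST15} for the uniform case $a_i=a$, $b_i=b$, and then work out in full detail the single family $I=I(n,n,n,1,1,1)=(xyz,x^n,y^n,z^n)\subset k[x,y,z]$, culminating in Proposition~\ref{nnn111}. So there is no ``paper's own proof'' to compare against.

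It is worth contrasting your plan with the method used for the $(n,n,n,1,1,1)$ case, since the philosophies are orthogonal. The paper does \emph{not} go through $\gr_I(\RR)$, the Sally module, the Huckaba test, or Valabrega--Valla. Instead it writes down ten explicit candidate generators for the defining ideal $\LL\subset\BB=\RR[u,\TT_1,\TT_2,\TT_3]$ of $\RR[I\TT]$, lifts them to a generic ideal $\LL_0\subset\BB[X,Y,Z]$ by replacing $x^{\,n-3}\mapsto X$ etc., lets {\em Macaulay2} produce a length--$4$ free resolution of $\BB_0/\LL_0$, and then argues via Fitting ideals and the acyclicity lemma that the specialization $X\mapsto x^{\,n-3}$, $Y\mapsto y^{\,n-3}$, $Z\mapsto z^{\,n-3}$ remains a resolution and that $\LL'=\LL$. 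The bound $\mathrm{proj.\,dim}\,\BB/\LL\le 4$ then gives $\depth\RR[I\TT]\ge n$. This is an explicit, case--specific, computer--assisted argument; it buys certainty in one family but says nothing structural about the general monomial ACI.

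Your reduction of the conjecture to $\depth\gr_I(\RR)\ge n-1$ via Proposition~\ref{SallyJci} is correct and is exactly the intended use of the Sally formalism. Your three routes are all reasonable research directions, and you are honest that none is complete. Two remarks: first, your Route~2 is the most structured, but the Gr\"obner observation that $\mathrm{in}(q_i)=x_i^{a_i}$ are coprime only gives you control over membership in $(q_1,\dots,q_{n-1})$; the Valabrega--Valla condition $(q_1,\dots,q_{n-1})\cap I^{k}=(q_1,\dots,q_{n-1})I^{k-1}$ additionally requires control over the $I$--order of the \emph{cofactors} produced by the rewriting $x_i^{a_i}\mapsto x_n^{a_n}$, and that is precisely where the interaction with the extra generator $x^{b}$ and the inequality $\sum b_i/a_i<1$ must enter --- you have not indicated how. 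Second, in Route~3 the affine semigroup $H$ is not simplicial in general (it has $2n+1$ generators in $\mathbb{Z}^{\,n+1}$), so the ``holes'' description of local cohomology is more delicate than for, say, a numerical semigroup ring; you would likely need the Ishida complex or a careful choice of $n+1$ generators over which to compute a resolution. In short: your proposal is a sensible outline for attacking an open problem, not a proof, and the paper's own contribution to the conjecture proceeds by an entirely different (and much narrower) explicit--equations method.
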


 For $d=2$ this comes from \cite{RS03}.
An important special case was settled in  \cite[Theorem 2.5]{BST15}:

\begin{Theorem} If $a_i = a$ and $b_i=b$ for all $i$, then $\RR[I\TT]$ is almost Cohen-Macaulay. 
\end{Theorem}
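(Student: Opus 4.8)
The plan is to reduce the assertion to a depth statement for the associated graded ring and then settle that statement from the explicit monomial structure of $I$ together with the bound $\red_Q(I)\le d-1$ already established. For the reduction: since $\RR=k[x_1,\dots,x_d]$ is Cohen--Macaulay and $I$ is $\m$--primary, the reduction $Q=(x_1^a-x_d^a,\dots,x_{d-1}^a-x_d^a,(x_1\cdots x_d)^b)$ is generated by a homogeneous system of parameters (its only common zero is the origin), hence by a regular sequence; thus $\RR[Q\TT]$ is Cohen--Macaulay of dimension $d+1$, $\RR[Q\TT]\otimes\RR/I$ is a polynomial ring over the Artinian ring $\RR/I$, and $I\RR[Q\TT]$ is a maximal Cohen--Macaulay $\RR[Q\TT]$--module. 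Feeding the defining sequence $0\rar I\RR[Q\TT]\lar I\RR[I\TT]\lar S_Q(I)\rar 0$ of the Sally module and the sequence $0\rar \RR[I\TT]_+\lar \RR[I\TT]\lar \RR\rar 0$ (with $\RR[I\TT]_+\cong I\RR[I\TT]$ up to a shift) into local cohomology over the maximal graded ideal gives
\[ \depth\RR[I\TT]\ \ge\ \min\{\depth S_Q(I),\,d\}.\]
By Proposition~\ref{SallyJci}(4)--(5) one has $\depth S_Q(I)=\min\{\depth\gr_I(\RR)+1,\,d\}$ (and $S_Q(I)=0$ forces $\gr_I(\RR)$ Cohen--Macaulay), so $\RR[I\TT]$ is almost Cohen--Macaulay, i.e. $\depth\RR[I\TT]\ge d=\dim\RR[I\TT]-1$, as soon as $S_Q(I)$ is Cohen--Macaulay; a dual chase shows the converse, so Cohen--Macaulayness of $S_Q(I)$ is exactly the right target.

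The second step is to prove $S_Q(I)$ Cohen--Macaulay by the Huckaba test (Theorem~\ref{Huctest}): since $\red_Q(I)\le d-1$, this amounts to the single numerical identity
\[ \rme_1(I)\ =\ \sum_{j=1}^{d-1}\lambda\bigl(I^j/QI^{j-1}\bigr).\]
Both sides are explicit here. On the right, each $I^j/QI^{j-1}$ is Artinian, and the equal-exponent hypothesis $a_i=a,\ b_i=b$ together with $I^d=QI^{d-1}$ lets one present $I^j$ as $\sum_{k\le j}Q^{j-k}f^k$ modulo $QI^{j-1}$ (where $f=(x_1\cdots x_d)^b$) and list a spanning set of monomial classes, reducing all $d-1$ colengths to one $\mathfrak S_d$--symmetric count; note $\lambda(\RR/I)=a^d-(a-b)^d$ and $\lambda(\RR/Q)=\rme_0(I)=a^{d-1}bd$ (a complete-intersection product of degrees), which pins down $\lambda(I/Q)$. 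On the left, $\rme_1(I)$ is read off from $\lambda(\RR/I^n)$ for $n\gg0$, a lattice-point count governed by the Newton polyhedron of $I$, whose lower boundary consists of the $d$ symmetric facets through the single ``extra'' vertex $b\mathbf 1$ together with the facet $\sum\xi_i=a$. Verifying the identity finishes the proof. Two alternative routes: induction on $d$, using that a generic linear form $\ell=\sum c_ix_i$ is superficial for $I$ and that passing to $\RR/\ell$ carries the equal-exponent, finite-colength monomial almost complete intersection in $d$ variables to a closely related one in $d-1$ variables (the mechanism behind the case $d=2$ of \cite{RS03}); or comparison with the normalization via $0\rar\RR[I\TT]\lar\overline{\RR[I\TT]}\lar C\rar 0$, where $\overline{\RR[I\TT]}=\bigoplus_n\overline{I^n}\TT^n$ is a normal affine semigroup ring, hence Cohen--Macaulay by Hochster's theorem, and $C=\bigoplus_n\overline{I^n}/I^n$ is computable from the Newton polyhedron; here $\RR[I\TT]$ is almost Cohen--Macaulay once $\depth C\ge d-1$.

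The main obstacle is exactly this depth/length bookkeeping carried out uniformly in $d$: getting a clean handle on the colengths $\lambda(I^j/QI^{j-1})$ and on $\rme_1(I)$ (or, in the last approach, on the combinatorics of $C$) simultaneously for all $d$. The symmetry $a_i=a$, $b_i=b$ is what makes this feasible -- it collapses the $\binom{d}{2}$ Koszul-type relations of $Q$ and the $d-1$ colengths to a single symmetric count, and it makes the Newton polyhedron transparent. Without it one is in the situation of the open Conjecture~\ref{monoaciRacm}: the configuration $ae_1,\dots,ae_d,b\mathbf 1$ loses its symmetry, the explicit descriptions collapse, and neither the Huckaba identity nor the induction on $d$ obviously closes -- which is why only the equal-exponent case, \cite[Theorem~2.5]{BST15}, is currently a theorem.
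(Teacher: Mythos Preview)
The paper does not prove this theorem; it quotes it as \cite[Theorem~2.5]{BST15}, whose method (Sylvester forms) produces explicit generators for the defining ideal of $\RR[I\TT]$ and controls its projective dimension directly. What the paper itself supplies, immediately afterwards, is an ``artesanal'' argument for the special sub-case $d=3$, $a=n\ge 3$, $b=1$ (Proposition~\ref{nnn111}): write down ten candidate equations for $\LL$, lift them to an ideal $\LL_0$ in three auxiliary variables $X,Y,Z$, resolve $\LL_0$ by machine, check via the acyclicity lemma and a Fitting-ideal codimension count that the resolution survives the specialization $X\mapsto x^{n-3}$, $Y\mapsto y^{n-3}$, $Z\mapsto z^{n-3}$, and conclude that $\mbox{proj.\ dim}\,\BB/\LL\le 4$ with no associated prime of codimension four. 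Neither route touches the Sally module or the Huckaba test.

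Your reduction is sound: $\RR[I\TT]$ almost Cohen--Macaulay $\Leftrightarrow$ $S_Q(I)$ Cohen--Macaulay $\Leftrightarrow$ the Huckaba identity $\rme_1(I)=\sum_{j=1}^{d-1}\lambda(I^j/QI^{j-1})$, and this is a legitimate alternative strategy. But the proposal stops precisely where the content begins. You compute $\lambda(\RR/I)=a^d-(a-b)^d$ and $\rme_0(I)=a^{d-1}bd$, which together give only $\lambda(I/Q)$; neither $\rme_1(I)$ nor any of the $\lambda(I^j/QI^{j-1})$ for $j\ge 2$ is determined, and you offer no mechanism for matching the two sides --- you say ``verifying the identity finishes the proof'' and then in the next breath concede that this verification ``is exactly the main obstacle.'' The two alternative routes inherit the same defect: reduction modulo a generic linear form destroys the monomial structure, so the inductive step does not return you to the same class of ideals; and the normalization route trades one unproved depth inequality for another ($\depth C\ge d-1$). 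As it stands this is a correct outline with the proof excised, not a proof --- which is why the equal-exponent case had to wait for the explicit-equations machinery of \cite{BST15}.
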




We will recall the artesanal method of \cite{acm} and
examine in detail the case $a=b=c=n\geq 3$ and
$\alpha=\beta = \gamma =1$. We want to argue that
$\RR[I\TT]$ is almost Cohen--Macaulay. To benefit from the monomial generators in using {\em Macaulay2} we set
$I = (xyz, x^n, y^n,z^n)$. Setting $\BB=\RR[u, \TT_1, \TT_2, \TT_3]$, we claim that
\[
\LL= (z^{n-1}u - xy\TT_3, y^{n-1}u - xz\TT_2, x^{n-1}u - yz\TT_1, z^n\TT_2-y^n\TT_3,  z^n\TT_1-x^n\TT_3, 
y^n\TT_1-x^n\TT_2, 
\]
\[ y^{n-2}z^{n-2}u^2 - x^2\TT_2\TT_3, x^{n-2} z^{n-2} u^2 -y^2 \TT_1\TT_3,   x^{n-2}y^{n-2} u^2 - z^2\TT_1\TT_2,
x^{n-3}y^{n-3}z^{n-3}u^3 - \TT_1\TT_2\TT_3). 
  \]
 We also want to show that these ideals define an almost Cohen--Macaulay Rees algebra.

\bigskip

There is a natural specialization argument. Let $X$, $Y$ and $Z$ be new indeterminates and let
$\BB_0 = \BB[X,Y,Z]$. In this ring define the ideal $\LL_0$ obtained by replacing in the list above of generators 
of $\LL$, $x^{n-3}$ by $X$ and accordingly $x^{n-2}$ by $xX$, and so on; carry  out similar actions on the other variables:

\[
\LL_0= (z^2 Zu - xy\TT_3, y^{2}Yu - xz\TT_2, x^{2}Xu - yz\TT_1, z^3Z\TT_2-y^3Y\TT_3,  z^3Z\TT_1-x^3X\TT_3, 
y^3 Y\TT_1-x^3X\TT_2, 
\]
\[ yzYZu^2 - x^2\TT_2\TT_3, x z XZ u^2 -y^2 \TT_1\TT_3,   xyXY u^2 - z^2\TT_1\TT_2,
XYZu^3 - \TT_1\TT_2\TT_3). 
  \]

Invoking {\em Macaulay2} gives a (non-minimal) projective resolution
\[ 0 \rar \BB_0^4 \stackrel{\phi_4}{\lar}
 \BB_0^{17} \stackrel{\phi_3}{\lar}
\BB_0^{22} \stackrel{\phi_2}{\lar}
 \BB_0^{10} \stackrel{\phi_1}{\lar}
  \BB_0 \lar \BB_0/\LL_0 \rar 0.
 \]

We claim that the specialization $X \rar x^{n-3}$, $Y \rar y^{n-3}$, $Z \rar z^{n-3}$ gives a projective resolution
of $\LL$.

\begin{enumerate}

\item Call $\LL'$ the result of the specialization in $\BB$. We argue that $\LL' = \LL$.

\medskip

\item Inspection of the Fitting ideal $F$ of  $\phi_4$ shows that it contains $(x^3, y^3,z^3, u^3, \TT_1\TT_2\TT_3)$. 
From standard theory, the radicals of the Fitting ideals of $\phi_2$ and $\phi_2$  contain $\LL_0$, and therefore
the radicals of the Fitting ideals of these mappings after specialization will contain the ideal $(L_1)$ of $\BB$, as 
$L_1 \subset \LL'$.

\medskip

\item Because $(L_1)$ has codimension $3$,
 by the acyclicity 
 theorem (\cite[1.4.13]{BH}) 
  the complex gives a projective resolution of $\LL'$. Furthermore, as $\mbox{\rm proj. dim }\BB/\LL' \leq 4$,
 $\LL'$ has no associated primes of codimension $\geq 5$. Meanwhile
the Fitting ideal of $\phi_4$ having codimension $\geq 5$,  forbids the existence of associated primes
of codimension $4$. Thus $\LL'$ is 
unmixed.

\medskip

\item Finally, if $(L_1) \subset \LL' $, as $\LL'$ is unmixed its associated primes are minimal primes of 
$(L_1)$, but by \cite[Proposition 2.5(iii)]{acm}, there are just two such, $\m\BB$ and $\LL$. Since
$\LL' \not \subset \m\BB$, $\LL$ is its unique associated prime. Localizing at $\LL$ gives
the equality of $\LL' $ and $\LL$ since $\LL$ is a primary component of $(L_1)$.

\end{enumerate}

Let us sum up this discussion:

\begin{Proposition} \label{nnn111} The Rees algebra of $I(n, n, n, 1, 1, 1)$, $n\geq 3$, is almost Cohen--Macaulay.
\end{Proposition}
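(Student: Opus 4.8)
The plan is to bound the projective dimension of $\RR[I\TT]$ as a module over the polynomial ring $\BB=\RR[u,\TT_1,\TT_2,\TT_3]$, where $u$ and $\TT_1,\TT_2,\TT_3$ map to $xyz\TT$ and $x^n\TT,y^n\TT,z^n\TT$ respectively, with kernel the defining prime ideal $\LL$. Since $\dim\BB=7$, $\dim\RR[I\TT]=\dim\RR+1=4$ (as $I$ is $\m$-primary), and $\BB$ is a graded polynomial ring over the field $k$, the Auslander--Buchsbaum formula gives $\depth \RR[I\TT]=7-\mathrm{pd}_{\BB}(\BB/\LL)$; hence it suffices to prove $\mathrm{pd}_{\BB}(\BB/\LL)\le 4$, for then $\depth \RR[I\TT]\ge 3=\dim\RR[I\TT]-1$, which is exactly almost Cohen--Macaulayness. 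A resolution of length $4$ will be produced uniformly in $n\ge 3$ by specializing the {\em Macaulay2} resolution of $\BB_0/\LL_0$ displayed above along $X\mapsto x^{n-3}$, $Y\mapsto y^{n-3}$, $Z\mapsto z^{n-3}$; write $\LL'\subseteq\BB$ for the ideal generated by the specialized relations (equivalently, by the explicit list in the displayed presentation). Each listed relation vanishes on $\RR[I\TT]$ (e.g.\ $x^{n-1}u-yz\TT_1\mapsto 0$), so $\LL'\subseteq\LL$, and the goal becomes: (a) the specialized complex is acyclic, so $\mathrm{pd}_{\BB}(\BB/\LL')\le 4$; and (b) $\LL'=\LL$.

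For (a) I would invoke the Buchsbaum--Eisenbud acyclicity criterion (\cite[1.4.13]{BH}). Since specialization does not change the free modules, it suffices to check the grade conditions $\grade I_{r_k}(\bar\phi_k)\ge k$ for $k=1,\dots,4$, where $r_k=\rank\phi_k$; each such bound is $\ge 1$, which forces $\rank\bar\phi_k=r_k$ (the specialized determinantal ideal is nonzero and $\rank\bar\phi_k\le r_k$), so the numerical rank conditions are inherited from the exact complex over $\BB_0$. For $k=1$ the relevant ideal is $\LL'$, of height $3$. For $k=2,3$ one uses $\sqrt{I_{r_k}(\phi_k)}\supseteq\LL_0$, so after specialization $\sqrt{I_{r_k}(\bar\phi_k)}\supseteq(L_1)$, which has height $3$; since $\BB$ is Cohen--Macaulay this gives $\grade\ge 3\ge k$. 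For $k=4$, $\bar\phi_4$ has the expected rank $4$, and one checks that the ideal of maximal minors $I_4(\phi_4)$ specializes to an ideal containing $(x^3,y^3,z^3,u^3,\TT_1\TT_2\TT_3)$, which has height $5$; thus $\grade I_4(\bar\phi_4)\ge 5\ge 4$. The criterion then yields acyclicity, so $\mathrm{pd}_{\BB}(\BB/\LL')\le 4$.

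For (b) I would use unmixedness and primary decomposition. By $\mathrm{pd}_{\BB}(\BB/\LL')\le 4$ and Auslander--Buchsbaum, every associated prime $\p$ of $\LL'$ has $\depth(\BB/\LL')_{\p}=0$, hence $\mathrm{ht}\,\p=\mathrm{pd}_{\BB_{\p}}(\BB/\LL')_{\p}\le 4$; and if $\mathrm{ht}\,\p=4$ then $I_4(\bar\phi_4)\not\subseteq\p$ (its height being $5$), so $\bar\phi_4$ localizes to a split injection, $\mathrm{pd}_{\BB_{\p}}(\BB/\LL')_{\p}\le 3$, and $\depth(\BB/\LL')_{\p}\ge 1$ --- ruling out height-$4$ associated primes. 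Since $\LL'\supseteq(L_1)$ has height $\ge 3$, every associated prime of $\LL'$ has height exactly $3$ and so is minimal over $(L_1)$; by \cite[Proposition 2.5(iii)]{acm} the only such primes are $\m\BB$ and $\LL$. As $\LL'$ contains $x^{n-3}y^{n-3}z^{n-3}u^3-\TT_1\TT_2\TT_3\notin\m\BB$, the prime $\LL$ is the unique associated prime of $\LL'$, i.e.\ $\LL'$ is $\LL$-primary; combined with $(L_1)\subseteq\LL'\subseteq\LL$ and the fact that $\LL$ is a primary component of $(L_1)$ (\cite[Proposition 2.5]{acm}), localization at $\LL$ gives $\LL'_{\LL}=\LL_{\LL}$, whence $\LL'=\LL$. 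Then $\RR[I\TT]=\BB/\LL$ has a length-$4$ free resolution over $\BB$ and is almost Cohen--Macaulay.

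The hard part will be the computational core of step (a): producing the explicit length-$4$ resolution over $\BB_0$ and, above all, verifying that the maximal minors of the last map $\phi_4$ contain $(x^3,y^3,z^3,u^3,\TT_1\TT_2\TT_3)$ after specialization --- this is the single genuine calculation in the proof, and it is precisely what makes the conclusion uniform in $n$. Once this grade estimate is granted, everything else (the acyclicity criterion, Auslander--Buchsbaum, and the structure of $(L_1)$ from \cite{acm}) is formal.
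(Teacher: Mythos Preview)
Your proposal is correct and follows essentially the same argument as the paper: specialize the {\em Macaulay2} resolution of $\BB_0/\LL_0$, verify acyclicity via the Buchsbaum--Eisenbud criterion using the grade estimates on the Fitting ideals (in particular that $I_4(\phi_4)$ specializes to an ideal containing $(x^3,y^3,z^3,u^3,\TT_1\TT_2\TT_3)$), deduce unmixedness of $\LL'$ from $\mathrm{pd}\le 4$ together with the height-$5$ bound on the last Fitting ideal, and then identify $\LL'=\LL$ via the classification of minimal primes of $(L_1)$ from \cite[Proposition 2.5(iii)]{acm}. Your write-up is in fact slightly more explicit than the paper's in spelling out why height-$4$ associated primes are excluded (localizing makes $\bar\phi_4$ split) and in handling the rank conditions under specialization, but the route is the same.
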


\begin{Corollary} $\rme_1(I(n,n,n,1,1,1)) = 3(n+1)$.
\end{Corollary}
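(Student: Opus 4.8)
The plan is to evaluate $\rme_1(I)$ through the Cohen--Macaulay formula $\rme_1(I)=\s_Q(I)+\rme_0(I)-\lambda(\RR/I)$ (the Corollary above), for the monomial ideal $I=I(n,n,n,1,1,1)=(x^n,y^n,z^n,xyz)$ of $\RR=k[x,y,z]$ equipped with the reduction $Q=(x^n-z^n,\;y^n-z^n,\;xyz)$, for which $\red_Q(I)\leq 2$ as recorded just before Proposition~\ref{nnn111}; one may localize at $\m=(x,y,z)$ so that Proposition~\ref{SallyJci} applies verbatim.

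The first step is to feed Proposition~\ref{nnn111} into this circle of ideas. Since $\RR[I\TT]$ is almost Cohen--Macaulay, $\depth\RR[I\TT]\geq d=3$, and the standard sequences used in the proof of Proposition~\ref{SallyJci} (those relating $\BB$, $\BB_+$, $\BB_+[+1]$ and $\gr(\BB)$) give $\depth\gr_I(\RR)\geq d-1$; hence by Proposition~\ref{SallyJci}(4) the Sally module $S=S_Q(I)$ is Cohen--Macaulay. Because $\red_Q(I)\leq 2$ forces $I^3=QI^2$, the Sally fiber $F_Q(I)=S\otimes_\AA\AA/(Q\TT)=\bigoplus_{j\geq 1}I^{j+1}/QI^j$ is concentrated in degree $1$, so Serre's theorem on Euler characteristics (equivalently, the Huckaba test, Theorem~\ref{Huctest}) gives $\s_Q(I)=\rme_0(S)=\lambda(F_Q(I))=\lambda(I^2/QI)$.

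It then remains to compute three lengths. As $\RR$ is Cohen--Macaulay and $Q$ is generated by the regular sequence $x^n-z^n,\;y^n-z^n,\;xyz$ of degrees $n,n,3$, one has $\rme_0(I)=\rme_0(Q)=\lambda(\RR/Q)=3n^2$, while a direct monomial count gives $\lambda(\RR/I)=n^3-(n-1)^3=3n^2-3n+1$. For $\lambda(I^2/QI)$ I would combine the exact sequences $0\to I^2/QI\to\RR/QI\to\RR/I^2\to 0$ and $0\to Q/QI\to\RR/QI\to\RR/Q\to 0$ with the isomorphism $Q/IQ\cong(\RR/I)^3$ (valid since $Q$ is generated by a length-$3$ regular sequence, so $Q/Q^2$ is $\RR/Q$-free of rank $3$), obtaining $\lambda(I^2/QI)=\lambda(\RR/Q)+3\lambda(\RR/I)-\lambda(\RR/I^2)$. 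As $I^2=(x^{2n},y^{2n},z^{2n},x^2y^2z^2,x^ny^n,x^nz^n,y^nz^n,x^{n+1}yz,xy^{n+1}z,xyz^{n+1})$ is again monomial, a monomial count gives $\lambda(\RR/I^2)=12n^2-9n-1$, whence $\lambda(I^2/QI)=3n^2+3(3n^2-3n+1)-(12n^2-9n-1)=4$. Combining, $\rme_1(I)=4+3n^2-(3n^2-3n+1)=3(n+1)$.

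The main obstacle is the combinatorics of $\lambda(\RR/I^2)$: one stratifies the monomials $x^ay^bz^c\notin I^2$ by how many of $a,b,c$ reach $n$ --- if two of them do, the monomial is already a multiple of one of $x^ny^n,x^nz^n,y^nz^n$, so only the strata ``$a,b,c\leq n-1$'' and ``exactly one exponent $\geq n$'' contribute --- and then runs the inclusion--exclusion against $x^2y^2z^2$ and the type-$x^{n+1}yz$ generators; everything else is bookkeeping. As a numerical check, at $n=3$ one gets $\lambda(\RR/Q)=27$, $\lambda(\RR/I)=19$, $\lambda(\RR/I^2)=80$, so $\lambda(I^2/QI)=4$ and $\rme_1(I)=8+4=12=3\cdot 4$.
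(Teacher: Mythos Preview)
Your argument is correct, and its backbone coincides with the paper's: both invoke Proposition~\ref{nnn111} to get $\depth \gr_I(\RR)\geq d-1$, hence $S_Q(I)$ Cohen--Macaulay, and then use $\red_Q(I)\leq 2$ together with the Huckaba identity to reduce the computation of $\rme_1(I)$ to $\lambda(I/Q)+\lambda(I^2/QI)$ (equivalently, to $s_0(I)+\rme_0(I)-\lambda(\RR/I)$ as you phrase it).

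The one genuine difference is how $\lambda(I^2/QI)$ is obtained. The paper uses the content ideal of the first syzygy matrix of $I$, writing $\lambda(I^2/QI)=\lambda(I/Q)-\lambda(\RR/I_1(\phi))$; here $I_1(\phi)=(x^{n-1},y^{n-1},z^{n-1},xy,xz,yz)$, whose colength is $3n-5$, so $\lambda(I^2/QI)=(3n-1)-(3n-5)=4$. Your route---expressing $\lambda(I^2/QI)=\lambda(\RR/Q)+3\lambda(\RR/I)-\lambda(\RR/I^2)$ and then counting monomials in $\RR/I^2$---is more elementary and entirely self-contained: it avoids the structural formula linking $I^2/QI$ to $I_1(\phi)$ (which the paper imports from \cite{acm}), at the cost of a heavier combinatorial count. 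Both arrive at $\lambda(I^2/QI)=4$; the paper's version is shorter once that syzygy formula is in hand, while yours is independent of it.
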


\begin{proof} Follows easily since $\rme_0(I) = 3n^2$, the colengths of the monomial ideals $I$ and $I_1(\phi)$ directly
calculated and $\red_J(I) = 2$ so that 
\[ \rme_1(I) =  \lambda(I/J) + \lambda(I^2/JI) = \lambda(I/J) + [\lambda(I/J) - \lambda(\RR/I_1(\phi))]=
(3n-1) + 4.\] 
In particular we have $s_0(I) = 4$ for the multiplicity of the Sally module $S_Q(I)$.
\end{proof}






\section{Current issues}
\noindent
We leave this as a reminder of  unfinished business.

\begin{itemize}

\item Veronese relations

\medskip

\item Reduction mod superficial elements

\medskip

\item $j$-coefficients of Sally modules

\medskip

\item $S_Q(I)$ versus $S_Q(\overline{I})$

\medskip


\end{itemize}

\section{List of definitions and notation}

\noindent
Throughout
 $(\RR, \m)$ is a Noetherian local ring of dimension $d>0$.

\begin{itemize}
\item Multiplicative 
filtration of ideals: 
a sequence of ideals $\mathcal{F} = \{I_j, j\geq 0\}$ such that $I_0 = \RR$, $I_j \subset I_{j-1}$ and
$I_j I_k \subset I_{j+k}$

\item Rees algebra of a multiplicatice
 filtration: $\BB =  \sum_{j\geq 0} I_j\TT^j= \RR\oplus \BB_{+}\subset 
\RR[\TT]$

\item Associated graded ring of $\mathcal{F}$: $\gr_{\mathcal{F}}(\RR) = \bigoplus_{j\geq 0} I_{j}/I_{j+1}$ 

\item Reductions of a Rees algebra: a Rees subalgebra $\AA=\RR[Q\TT] \subset \BB $ such that $\BB$ is finite over $\AA$

\item Reduction number:
The reduction number of $\BB$ relative to a Rees subalgebra $\AA$ is   
\[ \red(\BB/\AA) = \inf \{ n \mid \BB = \sum_{j\leq n} \AA \BB_j\} \]
\item Special fiber of a Rees algebra: $F(\BB) = \BB\otimes_{\RR} \RR/\m$

\item Sally module:
The Sally module $S_{\BB/\AA}$ of $\BB$ relative to the Rees subalgebra $\AA=\RR[Q \TT]$ is the $\AA$-module 
is defined by the exact sequence of finitely generated $\AA$--modules
 \begin{center}
\begin{eqnarray*}\label{defnSally}
0 \rar I_1\AA \lar \BB_{+}[+1]= \bigoplus_{j\geq 1} I_{j}t^{j-1} \lar S_{\BB/\AA} = \bigoplus_{j\geq 1} I_{j+1}/I_1Q^{j} \rar 0 
\end{eqnarray*}
\end{center}

\item Special fiber of a Sally module: $F(S_{\AA}(\BB)) = S_{\AA}(\BB) \otimes_{\AA} \AA/(Q\TT) = 
\bigoplus_{j=1}^{r-1} I_{j+1}/I_1 Q^j$, $r = \red(\BB/\AA)$

\end{itemize}

\end{document}